\documentclass[a4paper,11pt]{amsart}
\usepackage{amsthm}
\usepackage{amsmath}
\usepackage{amssymb}
\usepackage{amscd}           
\usepackage{url}
\usepackage[all]{xy}               
\usepackage{stmaryrd}              
\usepackage{a4wide}
\usepackage[toc,page]{appendix}
\usepackage[utf8]{inputenc}
\usepackage[english]{babel}
\usepackage{todonotes}
\usepackage{thmtools}
\usepackage[fixlanguage]{babelbib}
\usepackage{tikz-cd}
\selectbiblanguage{english}

\newtheorem{theorem}{Theorem}

\newtheorem{lemma}{Lemma}

\newtheorem{proposition}{Proposition}

\newtheorem{problem}{Problem}
\newtheorem{remark}{Remark}

\newcommand{\Aut}{\mathop{\mathrm{Aut} }\nolimits}
\newcommand{\Gal}{\mathop{\mathrm{Gal} }\nolimits}

\newcommand{\SL}{\mathop{\mathrm{SL} }\nolimits}
\newcommand{\GL}{\mathop{\mathrm{GL} }\nolimits}

\newcommand{\tors}{\mathrm{tors}}

\title[On the Ramification of Quadratic Fields where there is a torsion growth]{On the Ramification of Quadratic Fields where the Torsion of a Rational Elliptic Curve Grows}
\author[M. Pineda]{Miguel Pineda-Martín}
\address{Departamento de \'Algebra, Facultad de Matem\'aticas and IMUS, Universidad de Sevilla. Avda. Reina Mercedes s/n, 41012 Sevilla, Spain.}
\email{miguelpinedamartin@gmail.com}
\thanks{This work was supported by the \emph{Ministerio de Ciencia e Innovaci\'on} under Project PID2020-114613GB-I00 (MCIN/AEI/10.13039/501100011033).}
\subjclass[2010]{Primary: 11G05, 14H52}
\keywords{Elliptic curves, torsion subgroups, number fields, discriminants}

\date{\today}

\begin{document}

\maketitle

\begin{abstract}
Let $E/\mathbb{Q}$ be a rational elliptic curve whose torsion group grows over a quadratic field $K$. In a previous paper, a relation between the primes of the conductor of $E$ and the primes that divide the discriminant of $K$ was shown. In the present paper, we
go further in this study and we compute the ramification of the field $K$, when a torsion point of prime order $\ell$ is added, in terms of invariants of the curve. Concretely, the conductor, the prime $\ell$ and the Kodaira symbol at the primes of bad reduction.
\end{abstract}

\maketitle

\section{Introduction: The problem}
The following notations and conventions will be used throughout the paper:
\begin{itemize}
    \item We will write $\mathcal{C}_r$ for the cyclic group of order $r$.
    \item As it is customary in the context of elliptic curves, the groups are usually written in additive notation.
    \item Given an elliptic curve $E$ defined over a number field $K$, we will write $E(K)$ for the group of points of $E$ with coordinates on $K$ and $E_\tors(K)$ for its torsion subgroup (including the case $K=\mathbb{Q}$).
    \item We will write $o(P)$ for the order of the point $P$ on the group $E(K)$.
    \item Whenever we consider a quadratic number field written as $K = \mathbb{Q} (\sqrt{d})$ we will assume $d$ is a square-free integer and we will denote by $d_K$ the discriminant of the extension $K|\mathbb{Q}$.
    \item Examples are taken from \cite{lmfdb} and labeled accordingly.
\end{itemize}
The discussion started on \cite{deReyna2026} was motivated by the following question originally stated in \cite[Problem 2]{GonTorI}
\begin{problem}\label{problem:1}
Is there a precise (and easy) description of which are the possible quadratic extensions
$K|\mathbb{Q}$ with $E_{\tors}(\mathbb{Q}) \neq E_{\tors}(K)$, ideally in terms of some invariant(s) of the curve?
\end{problem}
In this paper, we provide a satisfactory answer to this question in most of the cases when $K$ is unique, i.e. there's only one quadratic field such that a base change over it makes the torsion grow. The following theorem (proven in \cite[Theorem 2]{GonTorI}) gives the possible groups that can appear as torsion subgroups over $\mathbb{Q}$ and $K$ .

\begin{theorem}\label{thm:1}
With the previous notations, we have the following table:
$$
\renewcommand\arraystretch{1.3}
\begin{array}{|c|l|}
\hline
E_\tors(\mathbb{Q}) & \text{Groups that can appear as } E_\tors(K) \\
\hline
\mathcal{C}_1 & \left\{ \mathcal{C}_1\,,\, \mathcal{C}_3 \,,\, \mathcal{C}_5\,,\, \mathcal{C}_7\,,\, \mathcal{C}_9 \right\} \\
\hline
\mathcal{C}_2 & \left\{ \mathcal{C}_2\,,\, \mathcal{C}_4 \,,\, \mathcal{C}_6\,,\, \mathcal{C}_8\,,\, \mathcal{C}_{10}\,,\, \mathcal{C}_{12}\,,\, \mathcal{C}_{16}\,,\, \mathcal{C}_2 \times \mathcal{C}_{2}\,,\, \mathcal{C}_2 \times \mathcal{C}_{6}\,,\, \mathcal{C}_2 \times \mathcal{C}_{10} \right\} \\ 
\hline
\mathcal{C}_3 & \left\{ \mathcal{C}_3, \; \mathcal{C}_{15}, \; \mathcal{C}_3 \times \mathcal{C}_3 \right\} \\
\hline
\mathcal{C}_4 & \left\{ \mathcal{C}_4 \,,\,\mathcal{C}_8\,,\, \mathcal{C}_{12}\,,\, \mathcal{C}_2 \times \mathcal{C}_{4}\,,\, \mathcal{C}_2 \times \mathcal{C}_{8}\,,\, \mathcal{C}_2 \times \mathcal{C}_{12}\,,\, \mathcal{C}_4 \times \mathcal{C}_4\right\} \\
\hline
\mathcal{C}_5 & \left\{ \mathcal{C}_5, \; \mathcal{C}_{15} \right\} \\
\hline
\mathcal{C}_6& \left\{ \mathcal{C}_6, \; \mathcal{C}_{12}, \; \mathcal{C}_2 \times \mathcal{C}_6, \; \mathcal{C}_3 \times \mathcal{C}_6 \right\} \\
\hline
\mathcal{C}_7 & \left\{ \mathcal{C}_7 \right\} \\
\hline
\mathcal{C}_8 & \left\{ \mathcal{C}_8, \; \mathcal{C}_{16}, \; \mathcal{C}_2 \times \mathcal{C}_8 \right\} \\
\hline
\mathcal{C}_9 & \left\{ \mathcal{C}_9 \right\} \\
\hline
\mathcal{C}_{10} & \left\{ \mathcal{C}_{10}, \; \mathcal{C}_2 \times \mathcal{C}_{10} \right\} \\
\hline
\mathcal{C}_{12} & \left\{ \mathcal{C}_{12}, \; \mathcal{C}_2 \times \mathcal{C}_{12} \right\} \\
\hline
\mathcal{C}_2 \times \mathcal{C}_2 & \left\{ \mathcal{C}_2 \times \mathcal{C}_{2}\,,\, \mathcal{C}_2 \times \mathcal{C}_{4}\,,\, \mathcal{C}_2 \times \mathcal{C}_{6}\,,\, \mathcal{C}_2 \times \mathcal{C}_{8}\,,\,  \mathcal{C}_2 \times \mathcal{C}_{12}\right\} \\
\hline
\mathcal{C}_2 \times \mathcal{C}_4 & \left\{ \mathcal{C}_2 \times \mathcal{C}_4, \; \mathcal{C}_2 \times \mathcal{C}_8, \; \mathcal{C}_4 \times \mathcal{C}_4 \right\} \\
\hline
\mathcal{C}_2 \times \mathcal{C}_6 & \left\{ \mathcal{C}_2 \times \mathcal{C}_6, \mathcal{C}_2 \times \mathcal{C}_{12} \right\} \\
\hline
\mathcal{C}_2 \times \mathcal{C}_8 & \left\{ \mathcal{C}_2 \times \mathcal{C}_8 \right\} \\
\hline
\end{array}
$$
\end{theorem}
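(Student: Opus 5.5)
The plan is to combine the classification of torsion over quadratic fields with a Galois-descent argument that controls how much torsion can appear in a quadratic extension. For the ambient list I will invoke Kamienny--Kenku--Momose. It says that $E_\tors(K)$ for $K$ quadratic is one of
\[
\mathcal{C}_n\ (n\le 16,\, n\ne 11,13,14,15\text{ excluded appropriately}),\quad \mathcal{C}_2\times\mathcal{C}_{2m}\ (m\le 6),\quad \mathcal{C}_3\times\mathcal{C}_3,\quad \mathcal{C}_3\times\mathcal{C}_6,\quad \mathcal{C}_4\times\mathcal{C}_4.
\]
Since $E$ is defined over $\mathbb{Q}$, we can further restrict to curves with rational $j$-invariant.

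The key structural tool is the decomposition for $K=\mathbb{Q}(\sqrt d)$ with twist $E^d$. The map $P\mapsto (P+\sigma P,\,P-\sigma P)$ shows that
\[
E(K)[n] \text{ contains } E(\mathbb{Q})[n]\oplus E^d(\mathbb{Q})[n] \text{ with index dividing a power of } 2,
\]
and equality holds for odd $n$. Hence the odd part of $E_\tors(K)$ is $E_\tors(\mathbb{Q})_{\mathrm{odd}}\oplus E^d_\tors(\mathbb{Q})_{\mathrm{odd}}$. Mazur's theorem applied to both $E$ and $E^d$, together with the Weil pairing, then cuts the possibilities down. The pairing forces $\mathcal{C}_p\times\mathcal{C}_p\subseteq E(K)$ to imply $\mu_p\subset K$, so $p\le 3$, and $K=\mathbb{Q}(\sqrt{-3})$ when $p=3$. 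Isogeny considerations rule out further combinations: by Kenku's classification of rational cyclic isogenies, $E$ having rational $\ell$-torsion and $E^d$ having rational $\ell'$-torsion yields a rational $\ell\ell'$-isogeny. This is why, for example, $\mathcal{C}_{21}$ is excluded and why only $\mathcal{C}_{15}$ arises from $\mathcal{C}_5$ or $\mathcal{C}_3$, the $15$-isogeny being rational.

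For the $2$-primary part, the argument runs through how the 2-division polynomial and the halving conditions behave under a quadratic extension. If $E(\mathbb{Q})[2]=0$, the cubic stays irreducible over $K$, so no $2$-torsion appears; this gives the odd-only row for $\mathcal{C}_1$. If $E(\mathbb{Q})[2]\cong\mathcal{C}_2$, the remaining two roots generate a quadratic field, giving $\mathcal{C}_2\times\mathcal{C}_2$. Separately, a point halving in $K$ yields cyclic $2$-power growth, bounded by $\mathcal{C}_{16}$ via the rational isogeny bound $16$. I will then run a case-by-case through each row of Mazur's list, listing which growths are compatible.

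The main obstacle will be showing that each listed group actually occurs and that borderline cases are truly excluded. The latter include, for example, $\mathcal{C}_4\to\mathcal{C}_{16}$, $\mathcal{C}_2\times\mathcal{C}_4\to\mathcal{C}_2\times\mathcal{C}_{16}$, and $\mathcal{C}_9$ growth from $\mathcal{C}_3$. For exclusion, I will use rational points on the relevant modular curves such as $X_0(N)$, and explicit computation of fixed-torsion families (Kubert parametrizations) with the condition that a generic point becomes defined over a quadratic field. For existence, I will exhibit LMFDB examples for every entry of the table.
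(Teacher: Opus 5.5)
The paper gives no argument for this table; it quotes it from \cite[Theorem 2]{GonTorI}. As a proof, your proposal has a genuine gap. Everything that separates the rows is deferred to ``I will then run a case-by-case through each row'', and none of the arguments you actually give produces it.

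The odd part is fine. For odd $n$ you have $E(K)[n]=E(\mathbb{Q})[n]\oplus E^d(\mathbb{Q})[n]$. Together with Mazur for $E$ and $E^d$, the Weil pairing, the quadratic-field list and the fact that $E[2]$ cannot grow from $0$, this settles the rows $\mathcal{C}_1,\mathcal{C}_3,\mathcal{C}_5,\mathcal{C}_7,\mathcal{C}_9$ and the odd parts elsewhere.

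The substance is $2$-primary. You must prove the following exclusions:
$\mathcal{C}_2\not\to\mathcal{C}_2\times\mathcal{C}_4$ (this also removes $\mathcal{C}_2\times\mathcal{C}_8$, $\mathcal{C}_2\times\mathcal{C}_{12}$ and $\mathcal{C}_4\times\mathcal{C}_4$ from row $\mathcal{C}_2$, and $\mathcal{C}_2\times\mathcal{C}_{12}$ from row $\mathcal{C}_6$);
$\mathcal{C}_2\times\mathcal{C}_2\not\to\mathcal{C}_4\times\mathcal{C}_4$;
$\mathcal{C}_2\times\mathcal{C}_4\not\to\mathcal{C}_2\times\mathcal{C}_{12}$;
$\mathcal{C}_4\not\to\mathcal{C}_{16}$.
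Yet $\mathcal{C}_4\to\mathcal{C}_4\times\mathcal{C}_4$, $\mathcal{C}_2\times\mathcal{C}_2\to\mathcal{C}_2\times\mathcal{C}_{12}$ and $\mathcal{C}_2\to\mathcal{C}_{16}$ do occur, so none of this follows from counting or from your cyclic $2$-power bound. Each exclusion needs its own idea, for example:
\begin{itemize}
\item Suppose $E(\mathbb{Q})[2^\infty]=\mathcal{C}_2$, $K=\mathbb{Q}(\sqrt{\Delta})$ and $Q\in E(K)$ has order $4$. Either $2Q$ is a non-rational $2$-torsion point, and then $Q+\sigma Q$ is a rational point of order $4$. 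Or $\sigma Q=-Q$, so $Q\in E^{\Delta}(\mathbb{Q})$ halves $(0,0)$; on $y^2=x(x^2+ax+b)$ a direct computation shows this forces $(0,0)\in 2E(\mathbb{Q})$.
\item If $E[4]\subseteq E(\mathbb{Q}(i))$, then $E(\mathbb{Q})[4]=E(\mathbb{R})[4]$, which is $\mathcal{C}_4$ or $\mathcal{C}_2\times\mathcal{C}_4$.
\item Rational $\mathcal{C}_2\times\mathcal{C}_4$ on $E$ plus $3$-torsion on $E^d$ gives a rational cyclic $24$-isogeny on $E/\langle T\rangle$, where $T$ is a rational $2$-torsion point outside the rational $\mathcal{C}_4$. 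This is impossible because $X_0(24)(\mathbb{Q})$ consists of cusps.
\item An involution of $\mathbb{Z}/16$ has fixed subgroup of order $2$, $8$ or $16$, never $4$.
\end{itemize}
The ``borderline cases'' you single out are not these. $\mathcal{C}_2\times\mathcal{C}_{16}$ is already outside the Kamienny--Kenku--Momose list. $\mathcal{C}_3\to\mathcal{C}_9$ is immediate, because $E(K)[9]=\mathcal{C}_3\oplus E^d(\mathbb{Q})[9]$ is never cyclic of order $9$.

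Some stated steps are also wrong.
\begin{itemize}
\item The Kamienny--Kenku--Momose list contains $\mathcal{C}_n$ for every $n\le 16$ and for $n=18$. Your parenthetical removes $\mathcal{C}_{15}$, which appears twice in the table.
\item Cutting that list down for base changes of rational curves (dropping $\mathcal{C}_{11},\mathcal{C}_{13},\mathcal{C}_{14},\mathcal{C}_{18}$) comes from the odd decomposition, twist-invariance of $E[2]$ and Mazur applied to $E$ and $E^d$. It does not come from rationality of $j$.
\item Your isogeny argument does not exclude $\mathcal{C}_{21}$. Since $21$ is on Kenku's list ($X_0(21)$ has non-cuspidal rational points), you must exclude $\mathcal{C}_{21}$ via the quadratic-field classification or by checking the finitely many $j$-invariants with a rational $21$-isogeny.
\item For even $n$, the containment $E(\mathbb{Q})[n]\oplus E^d(\mathbb{Q})[n]\subseteq E(K)[n]$ is false when $E(\mathbb{Q})[2]\neq 0$. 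Inside $E(K)$ the two groups meet in $E(\mathbb{Q})[2]$, so the sum fails to be direct exactly in the $2$-primary situation where the table is decided.
\end{itemize}
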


Our starting point are the two main theorems of \cite{deReyna2026}
\begin{theorem}\label{thm:2}
    Let $E/\mathbb{Q}$ be an elliptic curve with conductor $N_E$ and $K = \mathbb{Q} (\sqrt{d})$ a quadratic number field with $E_\tors(\mathbb{Q}) \neq E_\tors(K)$. Then if $p \in \mathbb{Z}$ is a prime such that $p|d$, then either $p|N_E$ or $p=3$. 
\end{theorem}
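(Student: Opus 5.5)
The plan is to pass from torsion growth to a quadratic twist and then use the Néron–Ogg–Shafarevich criterion. Write $\sigma$ for the nontrivial automorphism of $K|\mathbb{Q}$ and $E^d$ for the quadratic twist of $E$ by $d$. The starting point is the standard decomposition: for odd $n$, $E(K)[n] \cong E(\mathbb{Q})[n] \oplus E^d(\mathbb{Q})[n]$, obtained by splitting a point $P$ into $(P+\sigma P)/2$ and $(P-\sigma P)/2$. For $2$-power torsion this decomposition holds only up to the subgroup $E(K)[2]$. Since $E_\tors(K)\neq E_\tors(\mathbb{Q})$, there is a point $P\in E(K)$ of prime power order, not defined over $\mathbb{Q}$, with $\sigma P \neq P$. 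Multiplying by a suitable integer, we may assume one of the following:
\begin{itemize}
\item[(a)] a point of odd prime order $\ell$ defined over $K$ but not over $\mathbb{Q}$, so that $E^d(\mathbb{Q})$ has a point of order $\ell$;
\item[(b)] a $2$-power torsion point on which $\sigma$ acts nontrivially, which in the worst case is a point $P$ with $2P\in E(\mathbb{Q})$ and $P\notin E(\mathbb{Q})$.
\end{itemize}

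Now fix a prime $p\mid d$ with $p\nmid N_E$, so that $E$ has good reduction at $p$, and aim to show $p=3$. The key step is the inertia argument. Since $p$ ramifies in $K$, the inertia group $I_p\subset \Gal(\overline{\mathbb{Q}}_p|\mathbb{Q}_p)$ surjects onto $\Gal(K|\mathbb{Q})$, so some $\tau\in I_p$ restricts to $\sigma$ on $K$. Take a torsion order $n$ prime to $p$. By Néron–Ogg–Shafarevich, $E[n]$ is unramified at $p$, so $\tau$ acts trivially on $E[n]$. But $\tau P=\sigma P\neq P$, which is a contradiction.

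Hence we are reduced to the case where the order of $P$ is a power of $p$ itself. In case (a) this means $\ell=p$; in case (b) it means $p=2$.

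The main obstacle is excluding $\ell=p\ge 5$ and $p=2$, while allowing $p=3$. My first idea is to use the formal group and the finite flat group scheme structure at $p$ (Raynaud's theorem). Good reduction at $p$ gives a finite flat model of $E[p]$ over $\mathbb{Z}_p$. For $e(K_{\mathfrak p}|\mathbb{Q}_p)=2<p-1$, which holds when $p\ge 5$, Raynaud's theorem says the generic fibre functor on finite flat group schemes is fully faithful. Consider the subgroup $\langle P\rangle$ over $K_{\mathfrak p}$: it is Galois stable over $\mathbb{Q}_p$, so it is a group scheme $G$ over $\mathbb{Q}_p$. Over $\mathcal{O}_{K_\mathfrak p}$ the group scheme $G$ is constant, hence étale, $\mathbb{Z}/p$. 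Over $\mathbb{Z}_p$, $G$ extends to a finite flat scheme whose generic character is the quadratic character $\chi_d$. The only finite flat models of rank-one characters over $\mathbb{Z}_p$ with $p>2$ are the unramified characters and $\omega\cdot$unramified, where $\omega$ is the mod $p$ cyclotomic character. Since $\chi_d$ is ramified, we need $\chi_d=\omega\cdot$unramified. The ramified part of $\omega$ has order $p-1$, so this forces $p-1\mid 2$, that is, $p=3$.

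For $p=2$ I would argue similarly with the $2$-divisibility. Alternatively, one can use the known parametrisation: a point with $2P=Q\in E(\mathbb{Q})$ is defined over $\mathbb{Q}(\sqrt{\alpha},\sqrt{\beta})$, where $\alpha,\beta$ come from $x(Q)-e_i$, and good reduction at $2$ forces these to be units in the appropriate sense, hence unramified. I expect this $p=2$ case to need the most explicit care.

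Finally, $p=3$ genuinely occurs, for example $\mathbb{Q}(\sqrt{-3})$ with $3$-torsion and good reduction at $3$, which is consistent with the exception in the statement.
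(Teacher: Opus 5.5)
The paper does not prove this statement; it imports it from \cite{deReyna2026}, so your argument has to stand on its own. Your first steps are fine. The N\'eron--Ogg--Shafarevich argument correctly reduces to new torsion of $p$-power order. The case $\ell=p$ odd also goes through. The real input there is Oort--Tate over $\mathbb{Z}_p$: a group scheme of order $p$ is \'etale or multiplicative, so the inertia character is trivial or $\omega$, of order $p-1$. The detour through Raynaud over $\mathcal{O}_{K_{\mathfrak p}}$ is unnecessary. You should also take $P$ to be the $\sigma$-anti-invariant point coming from $E^d(\mathbb{Q})$, so that $\langle P\rangle$ really is $G_{\mathbb{Q}_p}$-stable.

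The genuine gap is $p=2$, which you leave unproved, and the route you sketch aims at a false statement. Your inertia step only uses that $p$ ramifies in $K$. For $p=2$, however, the theorem concerns $2\mid d$, i.e.\ $v_2(d)=1$, and good reduction does not rule out ramification at $2$. The curve $y^2+xy+y=x^3-x^2-x-14$ (17a1) has $\Delta=-17^4$, hence good reduction at $2$. Its $2$-division polynomial $4x^3+b_2x^2+2b_4x+b_6$ equals $(4x-11)(x^2+2x+5)$, so it gains full $2$-torsion over $\mathbb{Q}(i)$, where $2$ ramifies. So \emph{units, hence unramified} fails. What you must show is that the relevant local field is $\mathbb{Q}_2(\sqrt{u})$ with $u\in\mathbb{Z}_2^\times$, and that requires an argument distinguishing $d\equiv 3\pmod 4$ from $d\equiv 2\pmod 4$.

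One way to do this:
\begin{enumerate}
\item Take $P$ with $2P\in E(\mathbb{Q})$ and $P\notin E(\mathbb{Q})$. Then $R=\sigma P-P\in E(\mathbb{Q})[2]\setminus\{0\}$.
\item Since $\sigma$ lies in inertia and $K_{\mathfrak p}$ has residue field $\mathbb{F}_2$, we get $\tilde R=0$.
\item Subtract a $\mathbb{Q}_2$-rational lift of $\tilde P$ to get $P'$ in the formal group with $\sigma P'=P'+R$. Writing the group law as $F(X,Y)=X+Y+XYG(X,Y)$ and $z'=z(P')$, this gives $\sigma z'-z'=z(R)\,(1+z'G(z',z(R)))$.
\item Let $w$ be the normalized valuation on $K_{\mathfrak p}$, so $w(2)=2$. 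Because $0\neq z(R)\in\mathbb{Q}_2$, the quantity $w(\sigma z'-z')$ is even.
\item If $v_2(d)=1$, then $\mathcal{O}_{K_{\mathfrak p}}=\mathbb{Z}_2[\sqrt d]$. For every $x=a+b\sqrt d\notin\mathbb{Z}_2$ one has $w(\sigma x-x)=w(2b\sqrt d)=3+2v_2(b)$, which is odd. This contradicts the previous step.
\end{enumerate}
For $d\equiv 3\pmod 4$ this valuation is even, which is exactly why $\mathbb{Q}(i)$ survives in the example.
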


\begin{theorem}\label{thm:3}
    Let $E/\mathbb{Q}$ be an elliptic curve. Assume $K$ is a quadratic number field such that there exists $P\in E(K)[\ell]\setminus E(\mathbb{Q})$, with $\ell\geq 3$ prime.
    \begin{enumerate}
        \item For every prime $p\neq \ell$ that ramifies in $K$, $E$ has additive reduction at $p$, i.e. $p^2|N_E$.
        \item If $p=\ell>3$  ramifies in $K$, $E$ has additive reduction at $p$, i.e. $p^2|N_E$.
    \end{enumerate}
\end{theorem}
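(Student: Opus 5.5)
The key reduction is to pass to the quadratic twist. Write $K=\mathbb{Q}(\sqrt d)$, let $\sigma$ generate $\Gal(K|\mathbb{Q})$, and let $E^d$ be the twist of $E$ by $d$. Since $\ell$ is odd and $P\in E(K)[\ell]\setminus E(\mathbb{Q})$, the point $Q=P-\sigma(P)$ is nonzero: otherwise $P$ would be $\sigma$-invariant and hence rational. It satisfies $\sigma(Q)=-Q$, so $Q$ gives a rational point of order $\ell$ on $E^d$. Thus $E^d(\mathbb{Q})$ has a point of order $\ell$.

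For part (1), fix a prime $p\neq \ell$ ramifying in $K$, i.e. $p\mid d_K$, and argue by contradiction. Suppose $E$ has good or multiplicative reduction at $p$.

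\emph{Good reduction.} Then $E^d$ has additive reduction at $p$; for $p=2$ one checks this separately with the ramified twist. Now use that the reduction map restricted to the prime-to-$p$ torsion is injective into $E^d(\mathbb{Q}_p)/E^d_1(\mathbb{Q}_p)$. By Kodaira--Néron, the component group of additive reduction has order at most $4$, and the identity component of the reduction is $\mathbb{G}_a(\mathbb{F}_p)$, which has order $p$. Hence a rational point of order $\ell\ge 3$, with $\ell\ne p$, can exist only when $\ell=3$ and the Kodaira type is IV or IV$^*$. That case needs more care. Since $E$ has good reduction, its twist by a ramified character has type I$_0^*$: for odd $p$ the twist by a ramified quadratic character turns I$_0$ into I$_0^*$. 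The component group of I$_0^*$ is $2$-torsion, so there is no $3$-torsion. This is a contradiction.

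\emph{Multiplicative reduction.} Then $E^d$ has type I$_n^*$, whose component group has order $4$. Again $\ell$-torsion with $\ell\ge3$, $\ell\neq p$, cannot inject, which is a contradiction.

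The cleanest uniform way to run both cases is the following. The prime-to-$p$ torsion of $E^d(\mathbb{Q}_p)$ injects into $\widetilde{E^d}_{ns}(\mathbb{F}_p)\times\Phi$. Over the ramified extension $K_{\mathfrak p}$, $E^d$ becomes isomorphic to $E$, which has semistable reduction, so we can alternatively compare inertia actions. The inertia group $I_p$ acts on $Q$ through the quadratic character, by $-1$, because $Q$ is rational on $E^d$, i.e. $\sigma$ acts as $-1$ on $Q$ viewed on $E$. On the other hand, $E[\ell]$ for $\ell\ne p$ is unramified when reduction is good. When reduction is multiplicative, $I_p$ acts unipotently, so it has no eigenvalue $-1$ since $\ell$ is odd. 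This contradiction is the argument I would actually write, and it gives (1) directly.

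For part (2), take $p=\ell>3$ ramified in $K$, and suppose $E$ has good or multiplicative reduction at $\ell$. The group $E^d(\mathbb{Q})$ contains a point of order $\ell$ on which $I_\ell$ acts through the ramified quadratic character $\chi$. In the good reduction case, use Raynaud's theorem on finite flat group schemes: since $e=1<\ell-1$, the inertia characters on $E[\ell]$ restricted to tame inertia are fundamental characters raised to exponents $0$ or $1$, namely $1$ or $\omega$ (or level-$2$ characters in the supersingular case). But $\chi=\omega^{(\ell-1)/2}$ on tame inertia, and $(\ell-1)/2\notin\{0,1\}$ exactly when $\ell>3$; the supersingular case has no inertia-stable line at all. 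In the multiplicative case, the Tate curve gives characters $1$ and $\omega$ on the semisimplification, and the same exponent argument rules it out.

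The main obstacle is this $p=\ell$ step, specifically justifying the tame-inertia description of $E[\ell]$ via Raynaud/Serre. This is also exactly where $\ell=3$ fails, since $\omega^{1}=\chi$ when $\ell=3$.
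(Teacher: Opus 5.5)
Your final argument is correct. The paper gives no proof of this statement to compare with: Theorem~\ref{thm:3} is quoted from \cite{deReyna2026} as a starting point. Your route is, however, the inertia-image technique the paper sets up in Section~\ref{section:2} and uses in Proposition~\ref{prop:4}.

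The line $\langle Q\rangle\subset E[\ell]$, with $Q=P-\sigma(P)$, is $G_{\mathbb{Q}}$-stable with character $\chi_K$, and $p\mid d_K$ means $\chi_K|_{I_p}\neq 1$.

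For $p\neq\ell$, some $\tau\in I_p$ acts on $Q$ by $-1\neq 1$, since $\ell$ is odd. This contradicts:
\begin{itemize}
\item good reduction, since then $E[\ell]$ is unramified by N\'eron--Ogg--Shafarevich;
\item multiplicative reduction, since then $\rho_\ell(I_p)$ has order $1$ or $\ell$ by Proposition~\ref{prop:1}(1), so no element of it acts by $-1$ on a vector.
\end{itemize}
No case distinction in $p$ is needed, so $p=2$ is covered.

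For $p=\ell$, $\chi_K|_{I_\ell}=\omega^{(\ell-1)/2}$ has order $2$. An $I_\ell$-stable line in $E[\ell]$ carries either the trivial character or a character of order $\ell-1$:
\begin{itemize}
\item for ordinary good reduction, this follows from the connected--\'etale sequence;
\item for multiplicative reduction, it follows from the groups $H_1,H_2$ of Proposition~\ref{prop:1}(2);
\item supersingular reduction gives an irreducible $I_\ell$-module, so there is no stable line at all.
\end{itemize}
Hence only $\ell=3$ escapes, as you say. The one input not recorded in Section~\ref{section:2} is Serre's description of tame inertia for good reduction at $p=\ell$, which you cite correctly.

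Two minor points.

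First, delete the component-group detour in part (1). It is superfluous, and it leaves the case $p=2$, $\ell=3$ with good reduction unresolved. At $2$, a ramified twist of a good-reduction curve need not have type $I_0^*$. For example, $y^2=x^3+4x^2-16$, the $(-1)$-twist of $y^2+y=x^3-x^2$, has type $II^*$ at $2$ by Tate's algorithm.

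Second, in part (2) state the key fact as ``$I_\ell$ acts on $Q\in E[\ell]$ through $\chi_K$''. As a point of $E^d(\mathbb{Q})$, $Q$ is fixed.
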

Given a rational elliptic curve $E/\mathbb{Q}$ such that $E_{\tors}(\mathbb{Q})\neq E_{\tors}(K)$, where $K = \mathbb{Q}(\sqrt{d})$, we want to compute $d$. These two results allow us to bound the primes that divide $d$. The main results of this paper give conditions on the primes that divide the conductor which imply that they also divide $d$.
\\~\\
Before stating those results let us note that there are some cases where it is easy to compute $K$.
\begin{itemize}
    \item \textbf{$E(\mathbb{Q})[2] = \mathcal{C}_2 $ and $E(K)[2] = \mathcal{C}_2\times \mathcal{C}_2 $}: In this case, the curve over $\mathbb{Q}$ has one and only one non-trivial rational $2$-torsion point. Therefore the $2$-division polynomial has a rational root and two conjugated roots over $K$. Choosing an equation of $E$, and using the definition of the discriminant, is easy to prove that $K = \mathbb{Q}(\sqrt{\Delta})$, where $\Delta$ is the minimal discriminant of $E$.
    \item \textbf{$\mathcal{C}_3\times \mathcal{C}_3$ or $\mathcal{C}_4\times \mathcal{C}_4 \leq E_{\tors}(K)$}: The Weil pairing forces the field embedding $\mathbb{Q}(\zeta_n)\subset \mathbb{Q}(E[n])$. In these cases
    $\mathbb{Q}(\zeta_m)\subset \mathbb{Q}(E[m])\subset K$ with $m\in\{3,4\}$. Then, $K = \mathbb{Q}(\zeta_3) = \mathbb{Q}(\sqrt{-3})$ for $m=3$ and $K = \mathbb{Q}(\zeta_4) = \mathbb{Q}(i)$ for $m= 4$. 
\end{itemize}
Now, as $E_{\tors}(K)\neq E_{\tors}(\mathbb{Q})$ a new torsion point must appear when moving from $\mathbb{Q}$ to $K$. It may happen that some new torsion point $P\in E(K)$ appears with no nontrivial multiple of $P$ belonging to $E(\mathbb{Q})$ (this will be called the \textit{strict} case). Or it may happen that, for each new torsion point $P\in E(K)$, there is a nontrivial multiple $mP$ belonging to $E(\mathbb{Q})$ (this will be the \textit{mixed} case). 
\\~\\
Without loss of generality, we can assume that the point $P$ is of order prime $\ell$ in the strict case and of order $\ell^r$ in the mixed case. The possible values for $\ell$ are
\begin{itemize}
    \item \textbf{Strict Case}: $\ell\in \{2,3,5,7\}$.
    \item \textbf{Mixed Case}: $\ell\in\{2\}$.
\end{itemize}
Note that the strict case $\ell=2$ is one of the trivial ones mentioned above, where $K=\mathbb{Q}(\sqrt{\Delta})$. We won't study the mixed case for $\ell=2$ on this paper. The main results of this paper are:
\begin{theorem}\label{thm:4}
    Let $E/\mathbb{Q}$ be a rational elliptic curve and $\ell$ a prime such that there exists a point $P\in E(K)[\ell]\setminus E(\mathbb{Q})$, where $K = \mathbb{Q}(\sqrt{d})$ and $\ell\geq 3$. Let us consider another prime integer $0<p\in \mathbb{Z}$, we have
    \begin{itemize}
        \item If $\ell\in\{5,7\}$ the following conditions determine the primes that divide $d_K$ in terms of invariants of $E$:
        \begin{itemize}
            \item If  $p\neq \ell$, $p|d_K$ if and only if $E$ has additive reduction at $p$.
            \item If $p=\ell = 5$,  $p|d_K$ if and only if $E$ has additive reduction and the Kodaira symbol at $p$ is different from $II$ and $III$.
            \item If $p=\ell = 7$,  $p|d_K$ if and only if $E$ has additive reduction and the Kodaira symbol at $p$ is different from $II$.
        \end{itemize}
        \item If $p\neq \ell = 3$,   $p|d_K$ if and only if $E$ has additive reduction and the Kodaira symbol at $p$ is different from $IV^*$ and $IV$.
    \end{itemize}
\end{theorem}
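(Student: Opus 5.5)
The approach is to use quadratic twists. Write $K=\mathbb{Q}(\sqrt d)$ and let $E^d$ be the quadratic twist of $E$ by $d$. Let $\sigma$ be the generator of $\Gal(K|\mathbb{Q})$. Since $P\in E(K)[\ell]\setminus E(\mathbb{Q})$ and $\ell$ is odd, we can decompose $P=\frac{1}{2}(P+\sigma P)+\frac{1}{2}(P-\sigma P)$. The first summand lies in $E(\mathbb{Q})[\ell]$ and the second corresponds to a point of $E^d(\mathbb{Q})[\ell]$. In the strict case the first summand vanishes, and I would check that this forces $E^d(\mathbb{Q})$ to have a rational point of order $\ell$. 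Then $\ell\in\{5,7\}$ makes $E^d$ sit in the Mazur families $\mathcal{C}_5,\mathcal{C}_{10},\mathcal{C}_7$, and $\ell=3$ makes $E^d$ have a rational $3$-torsion point.

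The key local fact is the following. For a prime $p\nmid 2d$, the curves $E$ and $E^d$ have the same reduction type at $p$. For $p\,|\,d$ with $p$ odd, twisting by a ramified quadratic character swaps Kodaira types: $I_0\leftrightarrow I_0^*$, $I_n\leftrightarrow I_n^*$, $II\leftrightarrow IV^*$, $III\leftrightarrow III^*$, $IV\leftrightarrow II^*$. The plan is therefore:
\begin{enumerate}
\item If $p\nmid d_K$, then $E$ has additive reduction at $p$ if and only if $E^d$ does, with the same Kodaira symbol.
\item If $p\,|\,d_K$, then $E$ has additive reduction by Theorem \ref{thm:3} (the case $p=2$ is handled there too).
\end{enumerate}
So the converse direction, namely that additive reduction of $E$ at $p$ forces $p\,|\,d_K$, reduces to a statement about $E^d$. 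The right way to phrase it is to show that $E^d$ \emph{cannot} have certain additive Kodaira types at $p$, because $E^d$ has a rational $\ell$-torsion point.

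To rule those types out I would use the standard constraint coming from the component group. If $E^d$ has a rational point of order $\ell$ and additive reduction at $p\neq\ell$, then the point injects into $E^d(\mathbb{Q}_p)/E^d_0(\mathbb{Q}_p)$ (the Tamagawa quotient). This is because $E_0/E_1$ is $\tilde E_{ns}(\mathbb{F}_p)\cong\mathbb{F}_p^+$ has order $p$, and $E_1$ is a pro-$p$ group. Consequently $\ell\,|\,c_p(E^d)\le 4$.
\begin{itemize}
\item For $\ell\in\{5,7\}$ this is impossible, so $E^d$ has semistable reduction at $p$. Hence $E$ has additive reduction at $p\ne\ell$ if and only if $p\,|\,d$.
\item For $\ell=3$, we need $c_p=3$, so $E^d$ has type $IV$ or $IV^*$. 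Twisting then gives $E$ of type $IV^*$ or $IV$ when $p\,|\,d$, which is consistent with the statement. If $E$ is additive of type other than $IV,IV^*$ at $p\nmid d$, then $E^d$ has that same type, a contradiction. For the forward direction, $p\,|\,d_K$ means $E$ is additive, and $E$ cannot be of type $IV$ or $IV^*$. Otherwise $E^d$ would be of type $II^*$ or $II$, with $c_p=1$, contradicting $3\,|\,c_p(E^d)$.
\end{itemize}

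The main obstacle is the case $p=\ell\in\{5,7\}$. Here the injection argument fails, because $E_1(\mathbb{Q}_\ell)$ can contain $\ell$-torsion when the ramification index is large. I would instead use the known classification of reduction of $X_1(5)$ and $X_1(7)$ families at $\ell$. Concretely, I would parametrize $E^d$ by Tate normal form $E(b,c)$ with a rational point of order $5$ or $7$, and compute $\Delta$, $c_4$ and $v_\ell$ of them. From this I would show that additive reduction of $E^d$ at $\ell=5$ only occurs with Kodaira type $II$ or $III$, and at $\ell=7$ only with type $II$; the expectation is that the point must lie in $E_1$ with $e\le \ell-1$ constraints, which bounds $v_\ell(\Delta)$. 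Granting this, if $\ell\nmid d$ and $E$ is additive at $\ell$, then $E$ has the same type as $E^d$, namely $II$ or $III$ (respectively $II$). If $\ell\,|\,d$, then $E^d$ is either semistable, making $E$ of type $I_n^*$, or $E$ is the twist of type $II$ or $III$, namely $IV^*$ or $III^*$, none of which are excluded. This gives the equivalence. Verifying these local computations at $\ell$ via Tate's algorithm on the Tate normal form is the step I expect to be hardest.
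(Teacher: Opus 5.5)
There is one genuine gap: the case $\ell=3$, $p=2$, in the direction ``$2\mid d_K\Rightarrow$ the Kodaira symbol of $E$ at $2$ is not $IV$ or $IV^*$''. You exclude these types by the swap $IV\leftrightarrow II^*$, $IV^*\leftrightarrow II$ under a ramified twist. You yourself stated that table only for odd $p$, and rightly so. At $p=2$ the Kodaira symbol of a ramified quadratic twist is not determined by the original symbol in this way. The component-group bound alone also cannot tell you which type $E^d$ has, whether $E^d$ is additive or semistable at $2$. Some genuinely $2$-adic input is needed, and the paper's Proposition~\ref{prop:4} supplies it. Since $K=\mathbb{Q}(P)\subset\mathbb{Q}(E[3])$, ramification of $2$ in $K$ forces the image of inertia at $2$ in $\GL_2(\mathbb{F}_3)$ to have even order. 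But for type $IV$ or $IV^*$ the reduction is potentially good, and by Kraus's theorem (Proposition~\ref{prop:3}) that image is $\Phi_2$, of order $3$. This argument works for every $p\neq 3$, so it could replace your use of the table for $p\geq 5$ as well. Note also a slip: ``twisting then gives $E$ of type $IV^*$ or $IV$ when $p\mid d$'' is wrong. At odd $p$ the ramified twists of $IV$ and $IV^*$ are $II^*$ and $II$, and that is what makes this consistent with the statement.

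Apart from this, your argument is sound. For $p\neq\ell$ it is a genuinely different and much shorter route than the paper's. The key step is that a rational point of order $\ell$ on $E^d$ injects into the component group when $E^d$ is additive at $p\neq\ell$, so that $\ell\mid c_p(E^d)\leq 4$. Such a point exists because $P-\sigma P\neq 0$; no strict-case hypothesis is needed. Combined with Theorem~\ref{thm:3}, this replaces the paper's Zywina--Sutherland inertia analysis (Propositions~\ref{prop:6} and~\ref{prop:7}). It also replaces all the Tate-algorithm computations of Lemmas~\ref{lemma:1}--\ref{lemma:6} and Propositions~\ref{prop:9} and~\ref{prop:11}. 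It does so uniformly for all $p\neq\ell$, including $p=2$ and $p=3$. Just write $p\nmid d_K$ rather than $p\nmid d$ throughout, since the two differ at $2$ when $d\equiv 3\pmod 4$.

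For $p=\ell\in\{5,7\}$, the fact you grant is exactly Melistas's theorem, which the paper simply cites in Propositions~\ref{prop:10} and~\ref{prop:12}; you should cite it too. Your heuristic for proving it needs care. Since $1<\ell-1$, $E_1(\mathbb{Q}_\ell)$ is torsion-free, so the point does not lie in $E_1$ over $\mathbb{Q}_\ell$. The formal-group bound only becomes useful over the extension where $E$ acquires good reduction, where it gives $e\geq\ell-1$, and that still leaves $II^*$ and $III^*$ open. Your deduction from Melistas's theorem is correct. It is in fact more careful than the paper's, because you allow $E^d$ to be semistable at $\ell$, which makes $E$ of type $I_n^*$. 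The ``moreover'' clauses of Propositions~\ref{prop:10} and~\ref{prop:12} overlook this case.
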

\begin{remark}\label{rmk:1}
    Assuming Theorem \ref{thm:4} hypothesis, the statements for $p=\ell = 5$ and $p=\ell=7$ can be restated as 
    \begin{itemize}
        \item If $p=\ell = 5$, $p|d_K$ if, and only if, $E$ has additive reduction and the Kodaira symbol at $p$ is $IV^*$ or $III^*$.
        \item If $p=\ell = 7$, $p|d_K$ if and only if $E$ has additive reduction, and the Kodaira symbol at $p$ is $IV^*$.
    \end{itemize}
    This is proven in Proposition \ref{prop:10} and Proposition \ref{prop:12}.
\end{remark}
This result allows us to determine the primes that divide $d_K$ when $\ell=5,7$. When $\ell = 3$ the result allows us to determine the same for every prime but $3$. This case is more difficult because there are several ways in which a point of order $3$ can appear over a quadratic extension. There are three cases:
\begin{itemize}
    \item $\mathcal{C}_3\times \mathcal{C}_3 \leq E_{\tors}(K)$. We have already seen that $K = \mathbb{Q}(\sqrt{-3})$.
    \item $E_{\tors}(\mathbb{Q})=\mathcal{C}_1$ and $E_{\tors}(K)=\mathcal{C}_9$.
    \item $E_{\tors}(\mathbb{Q})[3]=\mathcal{C}_1$ and $E_{\tors}(K)[3]=\mathcal{C}_3$
\end{itemize}
In the second case, we can determine the primes that divide $|d_K|$ using the following result
\begin{theorem}\label{thm:5}
     Let $E/\mathbb{Q}$ be a rational elliptic curve such that there exists a point $P\in E(K)[9]\setminus E(\mathbb{Q})$ of order $9$, where $K = \mathbb{Q}(\sqrt{d})$. Then, $3|d_K$ if and only if $E$ has additive reduction on $3$ and the Kodaira Symbol at $3$ is not $IV$.
\end{theorem}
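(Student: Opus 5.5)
Throughout, let $K=\mathbb{Q}(\sqrt{d})$ and let $P\in E(K)$ be a point of order $9$ with $P\notin E(\mathbb{Q})$, as in Theorem \ref{thm:5}. By Theorem \ref{thm:1}, a point of order $9$ can only appear over a quadratic field when $E_\tors(\mathbb{Q})=\mathcal{C}_1$ and $E_\tors(K)=\mathcal{C}_9$. The plan is to pass to the quadratic twist $E^d$. Since $E(K)=E(\mathbb{Q})\oplus E^d(\mathbb{Q})$ up to $2$-power index, and $E(K)[9]=\mathcal{C}_9$ while $E(\mathbb{Q})[9]$ is trivial, we get $E^d(\mathbb{Q})[9]\cong \mathcal{C}_9$. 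So $E^d$ has a rational point of order $9$. By Kubert's table, $E^d$ then belongs to the one-parameter family
\[
E_t:\; y^2+(1-c)xy-by=x^3-bx^2, \qquad c=t^2(t-1),\quad b=c(t^2-t+1),
\]
with $t\in\mathbb{Q}$. Conversely, $E$ is the twist of some $E_t$ by $d$.

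Ramification at $3$ is then encoded in the local behaviour of the twist. Since $d$ is squarefree, $3\mid d_K$ if and only if $3\mid d$. The key fact I would use is that $E_t$ itself has either good or multiplicative reduction at $3$, or else a very restricted additive type there. To check this I would compute $\Delta(E_t)=t^9(t-1)^9(t^3-3t^2+1)$ and $c_4(E_t)$, and analyse $v_3$ in the three cases $v_3(t)>0$, $v_3(t)<0$ and $v_3(t)=0$. The cubic $t^3-3t^2+1$ is exactly the factor that makes $3$ special: its discriminant is $81$, and its splitting field is $\mathbb{Q}(\zeta_9)^+$, which is ramified only at $3$. I expect this to show that either $E_t$ is semistable at $3$, or $E_t$ has Kodaira type $IV$ (possibly $IV^*$) at $3$ when $t\equiv 2 \pmod 3$ and $3\mid t^3-3t^2+1$.

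Next I would run Tate's algorithm on the twist $E=E_t^d$ at $3$ in two cases.
\begin{enumerate}
\item If $3\nmid d$, the twist is unramified at $3$, so the reduction type of $E$ at $3$ is that of $E_t$. That is, $E$ is either non-additive at $3$ or additive with type $IV$.
\item If $3\mid d$, then twisting by a ramified quadratic character changes the type. A semistable curve becomes additive of type $I_0^*$ or $I_n^*$. A curve of type $IV$ at $3$ becomes $IV^*$, or some other type, but never $IV$; this needs care because of wild ramification at $p=3$.
\end{enumerate}
The equivalence then follows. If $3\mid d_K$, the twist is ramified and $E$ is additive of type other than $IV$. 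If $3\nmid d_K$, then $E$ has the same type as $E_t$, which is semistable or $IV$.

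The main obstacle is the local computation at $3$ for the type of $E_t$ and its ramified twist, since $3$ is wildly ramified and the usual rule $IV\leftrightarrow IV^*$ under twisting is not automatic there. To handle it, I would first reduce to finitely many classes of $t$ modulo a power of $3$, using Tate's algorithm with explicit $3$-adic valuations of $c_4$, $c_6$ and $\Delta$. I would then use the Papadopoulos tables, which give the Kodaira type at $p=3$ from the triple $(v_3(c_4),v_3(c_6),v_3(\Delta))$, both for $E_t$ and for $E_t^d$. The valuations for the twist are obtained from $c_4(E^d)=d^2c_4$, $c_6(E^d)=d^3c_6$ and $\Delta(E^d)=d^6\Delta$, followed by minimalization.
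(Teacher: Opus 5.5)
Your strategy is sound and genuinely different from the paper's. As written, though, it contains a wrong formula and leaves open a case that would break the argument. Both are easy to repair.

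\textbf{Comparison with the paper.} The paper starts from a minimal model of $E$ itself and pushes it through the coordinate changes of Section 3 to the Tate normal form of $E^d$. It then plays the $3$-adic valuations of the auxiliary quantities $s$, $y_1$, $\overline{a}_2$ against the discriminant relation. It uses Ogg's formula to bound $v_3(\Delta_1)$ and Proposition 5 for the non-additive case, with subcases $v_3(t)<0$, $>0$, $=0$ in each direction. You instead classify the fibre of the universal curve $E_t$ at $3$ once and transport it through the twist. This removes all the minimality bookkeeping and yields the exact Kodaira types.

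\textbf{The discriminant is wrong.} In fact $\Delta(E_t)=t^9(t-1)^9(t^2-t+1)^3(t^3-6t^2+3t+1)$, not $t^9(t-1)^9(t^3-3t^2+1)$. The missing factor $(t^2-t+1)^3$ is exactly what matters at $3$. For $t\equiv 2\pmod 3$ your formula gives $v_3(\Delta)=1$, i.e.\ multiplicative reduction and no additive case at all. The correct formula gives $v_3(\Delta)=3+2=5$.

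\textbf{The classification of $E_t$ at $3$.} With the correct formula it is direct:
\begin{itemize}
\item If $v_3(t)>0$ or $v_3(t-1)>0$, the Tate normal form is integral with $b_2\equiv 1\pmod 3$, so the type is $I_{9v_3(t)}$ or $I_{9v_3(t-1)}$.
\item If $v_3(t)<0$, put $u=1/t$ and rescale $a_i\mapsto u^{3i}a_i$. The model becomes integral with $a_1=u^3+u-1$ a unit, so the type is $I_{-9v_3(t)}$.
\item If $t\equiv 2\pmod 3$, then $3\mid 1-c$, $v_3(b)=1$, $b_8=-b^3$ and $b_6=b^2$. Tate's algorithm stops at Step 5: type $IV$ with $v_3(\Delta_{\min})=5$.
\end{itemize}
So $E_t$ is never of good reduction and never of type $IV^*$ at $3$. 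Your hedge ``possibly $IV^*$'' is not harmless. Such a fibre together with $3\nmid d$ would contradict the statement, so it must be excluded, and this computation does so.

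\textbf{The ramified twist.} You do not need Papadopoulos' tables. Twist the integral model $y^2=x^3+\frac{b_2}{4}x^2+\frac{b_4}{2}x+\frac{b_6}{4}$ of $E_t$ by $d$ with $v_3(d)=1$. This gives an integral model with $v_3(\Delta)=5+6=11<12$, hence minimal. A fibre of type $IV$ at $3$ has $v_3(\Delta_{\min})=f_3+2\le 7$, by Ogg's formula and $f_3\le 5$. So $E$ cannot be of type $IV$; running the algorithm in fact gives $II^*$. Your guess $IV\leftrightarrow IV^*$ is also off: a ramified quadratic twist adds $6$ to $v(\Delta)$ modulo $12$ for $p\ge 5$, so the standard pairing is $IV\leftrightarrow II^*$, and that is what occurs here. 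The multiplicative fibres become $I_n^*$.

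Altogether you obtain a sharper form of the theorem:
\begin{itemize}
\item $3\nmid d_K$ exactly when $E$ has type $I_{9m}$ or $IV$ at $3$;
\item $3\mid d_K$ exactly when $E$ has type $I_{9m}^*$ or $II^*$ at $3$.
\end{itemize}
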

The third case is more difficult. For instance, it can happen that a $3$-torsion point appears in two different quadratic extensions, so $K$ would not be unique. We give some discussion in Remark \ref{rmk:5}
\\~\\
The rest of the article is organized as follows:
\begin{itemize}
    \item Section $2$ describes the results on Galois representations that we need for our proofs
    \item Section $3$ presents the changes of variables that are common to our proofs in order to refer to them and simplify the proofs.
    \item Sections $4,5,6$ and $7$ are dedicated to prove Theorems \ref{thm:4} and \ref{thm:5}.
    \begin{enumerate}
        \item Section $4$ is dedicated to prove Theorem \ref{thm:5} and to prove of Theorem \ref{thm:4} for the case $\ell = 3$.
        \item Section $5$ is dedicated to prove auxiliar results that are required to prove Theorem \ref{thm:4} for $\ell = 5,7$.
        \item Section $6$ finishes the proof of Theorem \ref{thm:4} for the case $\ell = 5$.
        \item Section $7$ finishes the proof of Theorem \ref{thm:4} proving the case $\ell = 7$.
    \end{enumerate}
    \item Section $8$ concludes the article and proposes future lines of research to fully resolve Problem \ref{problem:1}.
    \item Section $9$ is an appendix with a presentation of Tate's Algorithm, which is one of the main results used in the proofs. 
\end{itemize}

\section{Images of Inertia}\label{section:2}

A fundamental part of the proof of the previous results is the study of the images of the inertia subgroups at a prime $p$, under the  mod $\ell$ Galois representations attached to an elliptic curve, where $p$ and $\ell$ can be different or equal. We state below the results that we will need.
\begin{proposition}\label{prop:1}
Let $E/\mathbb{Q}$ be an elliptic curve, and let $\ell,p$ be primes. Let
\[
\rho_\ell:\Gal(\mathbb{Q}(E[\ell])/\mathbb{Q})\longrightarrow \Aut(E[\ell])
\]
be the Galois representation attached to the $\ell$-torsion points. Let $I_p:= I(\mathfrak{P}|p)$ be the inertia subgroup associated to primes $\mathfrak{P}|p$, where $\mathfrak{P}$ is a prime ideal of $\mathbb{Q}(E[\ell])/\mathbb{Q}$. Assume $E$ has multiplicative reduction at $p$, then:

\begin{enumerate}
    \item If $\ell\neq p$, the image $\rho_\ell(I_p)$ is either trivial or a group of order $\ell$.
    
    \item If $\ell = p$, then exactly one of the following possibilities occurs:
    
\begin{enumerate}
    \item The wild inertia subgroup $I_{p,w}$ acts trivially on $E[\ell]$, and the image of $I_p$ is a cyclic group of order $\ell - 1$. In a suitable basis, it coincides with the subgroup
    \[
        H_1=\left\{\begin{pmatrix} a & 0\\ 0 & 1\end{pmatrix}: a\in \mathbb{F}_{\ell}^{\times}\right\}.
    \]
    
    \item The wild inertia subgroup $I_{p,w}$ does not act trivially on $E[\ell]$. In this case, the image of $I_{p,w}$ is a cyclic group of order $\ell$, and in a suitable basis it can be represented as
    \[
        \left\{\begin{pmatrix} 1 & b\\ 0 & 1\end{pmatrix}: b\in \mathbb{F}_{\ell}\right\}.
    \]
    The image of $I$ has order $\ell(\ell - 1)$ and can be represented as
    \[
        H_2=\left\{\begin{pmatrix} a & b\\ 0 & 1\end{pmatrix}: a\in \mathbb{F}_{\ell}^{\times},\ b\in \mathbb{F}_{\ell}\right\}.
    \]
\end{enumerate}
\end{enumerate}
\end{proposition}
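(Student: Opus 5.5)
The natural route is Tate's uniformization. Multiplicative reduction at $p$ means that over the maximal unramified extension $\mathbb{Q}_p^{nr}$, or over a quadratic unramified twist of it, $E$ becomes isomorphic to a Tate curve $\mathbb{G}_m/q^{\mathbb{Z}}$ with $v_p(q) = -v_p(j) = n > 0$. If the reduction is non-split, we pass to the unramified quadratic extension where it becomes split. This does not change the inertia group $I_p$, since inertia is insensitive to unramified extensions and the twist is unramified. So we may work with the Tate curve over $\mathbb{Q}_p^{nr}$, or over its completion, where inertia equals the full Galois group.

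The Tate parametrization gives an exact sequence of Galois modules
\[
0\to \mu_\ell \to E[\ell]\to \mathbb{Z}/\ell\mathbb{Z}\to 0,
\]
where $E[\ell]$ is identified with $\{\zeta^a (q^{1/\ell})^b\}$ modulo $q^{\mathbb{Z}}$. In the basis $\{\zeta_\ell, q^{1/\ell}\}$, an element $\sigma\in I_p$ acts by
\[
\begin{pmatrix}\chi_\ell(\sigma) & b(\sigma)\\ 0&1\end{pmatrix},
\]
where $\chi_\ell$ is the mod $\ell$ cyclotomic character and $b(\sigma)$ is given by $\sigma(q^{1/\ell})=\zeta_\ell^{b(\sigma)}q^{1/\ell}$.

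\textbf{Case $\ell\neq p$.} Here $\zeta_\ell\in\mathbb{Q}_p^{nr}$, so $\chi_\ell$ is trivial on $I_p$. The image is therefore contained in the unipotent group $\left\{\begin{pmatrix}1&b\\0&1\end{pmatrix}\right\}$, which has order $\ell$. The image is thus trivial (exactly when $\ell\mid n$) or of order $\ell$.

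\textbf{Case $\ell=p$.} Now $\mathbb{Q}_p^{nr}(\zeta_p)$ is totally tamely ramified of degree $p-1$, so $\chi_p(I_p)=\mathbb{F}_p^\times$. The image of $I_p$ lies in $H_2$ and surjects onto the diagonal part, so it is either a group of order $p-1$ mapping isomorphically onto $\mathbb{F}_p^\times$, or all of $H_2$. The first situation arises when the extension $\mathbb{Q}_p^{nr}(\zeta_p,q^{1/p})/\mathbb{Q}_p^{nr}(\zeta_p)$ is trivial, i.e. $q$ is a $p$-th power there. Any subgroup of $H_2$ of order $p-1$ has order prime to $p$, hence is a complement of the unipotent radical and is conjugate to $H_1$ by a Schur–Zassenhaus (or direct) argument; this conjugation is the change of basis. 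In the second situation the extension is a Kummer extension of degree $p$, which is wildly ramified. The wild inertia $I_{p,w}$, the pro-$p$ part, then maps onto the order-$p$ unipotent subgroup, while in the first case its image, being a $p$-group inside a group of order $p-1$, is trivial. This yields the dichotomy (a)/(b), with orders $\ell-1$ and $\ell(\ell-1)$.

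\textbf{Main obstacle.} The delicate point is the reduction from non-split to split multiplicative reduction together with the identification of inertia over the completion with inertia in $\mathrm{Gal}(\mathbb{Q}(E[\ell])/\mathbb{Q})$. The twist by the unramified quadratic character multiplies the matrices by $\pm1$ only through Frobenius, not on inertia, so $\rho_\ell(I_p)$ is unchanged. Once this is justified, everything reduces to the explicit Tate-curve computation, and the statement is essentially Serre's classical description, which one could also simply cite.
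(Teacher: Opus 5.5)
Your proposal is correct and follows essentially the same route as the paper, whose proof consists only of a citation of Serre's \S1.12. That argument runs exactly through the Tate parametrization, the filtration $0\to\mu_\ell\to E[\ell]\to\mathbb{Z}/\ell\mathbb{Z}\to 0$, and the action of inertia through the cyclotomic character and the Kummer class of $q$. Your reduction from non-split to split reduction, the Schur--Zassenhaus conjugation to $H_1$, and the wild-inertia dichotomy are all sound; in particular, you correctly phrase the dichotomy as $q$ being a $p$-th power in $\mathbb{Q}_p^{nr}(\zeta_p)$, not merely as $p\mid v_p(q)$.
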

\begin{proof}
    See \cite[Section 1.12]{Serre}. 
\end{proof}

In the case of additive reduction the classification is more complex. It suffices to present it over local fields. Let $\ell,p$ be primes (not necessarily distinct), and let $E/\mathbb{Q}_p$ be an elliptic curve with additive reduction at $p$. There always exists a finite extension $K'/\mathbb{Q}_p$ such that the reduction of $E$ over $K'$ is either good or multiplicative (see \cite[Proposition 5.4]{Silverman}). Moreover, the following criterion allows us to determine the type of potential reduction.

\begin{proposition}\cite[Proposition 5.5]{Silverman}
Let $E/K$ be an elliptic curve defined over a local field $K$, with ring of integers $R$. Then $E$ has potentially good reduction if and only if its $j$-invariant satisfies $j(E)\in R$.
\end{proposition}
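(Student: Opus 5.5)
This is Silverman's Proposition VII.5.5, cited in the paper; I would prove it with the Néron–Ogg–Shafarevich criterion and the Tate curve. Let $v$ be the normalized valuation of $K$, $k$ its residue field, and fix $\ell\neq \mathrm{char}(k)$.

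\emph{If $E$ has potentially good reduction, then $j(E)\in R$.} Take a finite extension $K'/K$ with ring of integers $R'$ over which $E$ has good reduction. A minimal Weierstrass model over $R'$ then has unit discriminant $\Delta'$, and $c_4'\in R'$. Hence $j(E)=c_4'^3/\Delta'\in R'$. Since $j(E)$ is invariant under change of model and lies in $K$, we get $j(E)\in R'\cap K=R$.

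\emph{If $j(E)\in R$, then $E$ has potentially good reduction.} I would argue through the Galois action on the Tate module $T_\ell(E)$.

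First, pass to a finite extension over which $E$ has semistable reduction. Adjoining $E[m]$ for $m\geq3$ prime to the residue characteristic is enough (Raynaud/Grothendieck); in the characteristic $\neq 2,3$ case one can instead write the curve as $y^2=x^3+Ax+B$ and adjoin a suitable twelfth root of $\Delta$. This reduces the claim to showing that semistable reduction with $j\in R$ must be good.

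Next, suppose the reduction over this extension were multiplicative. Then $v(\Delta)>0$ while $c_4$ is a unit on a minimal model, so $v(j)=-v(\Delta)<0$. This contradicts $j\in R$. So the semistable reduction is good.

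Alternatively, one can avoid semistable reduction and argue directly. If $v(j)<0$, Tate's uniformization gives $E\cong E_q$ over a quadratic extension. On $T_\ell$ the inertia group then acts through a nontrivial unipotent matrix $\begin{pmatrix}1&*\\0&1\end{pmatrix}$ with infinite image, coming from the Kummer extension generated by $q^{1/\ell^n}$. Hence $j\notin R$ is forced whenever inertia has infinite image. Conversely, if $j\in R$, choose an elliptic curve $E_0/R$ with good reduction and $j(E_0)=j(E)$; this is possible since the reduction $\bar j$ is achieved by a smooth curve over $k$, which lifts. Then $E$ is a twist of $E_0$ by a cocycle valued in $\Aut(E_0)$, a finite group of order dividing 24. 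This twist becomes trivial over a finite extension $K'$, so $E\cong E_0$ over $K'$ and has good reduction there.

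The main obstacle is the twisting step. When $j\in\{0,1728\}$ (and in characteristics $2,3$), one has to check that $\Aut$ of the generic fibre is finite and that the trivializing extension is finite. This holds because $H^1(G_K,\Aut(E_0))$ classes are split by the finite extension cut out by the kernel of the cocycle. Once this is in place, good reduction over $K'$ follows, completing the equivalence.
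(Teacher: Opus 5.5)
Your main argument is correct. It is essentially the proof of Silverman's Proposition VII.5.5, which the paper quotes without proving. On a good model over $K'$ one has $v'(j)=3v'(c_4')\ge 0$. Over a semistable extension, multiplicative reduction would give $v'(j)=-v'(\Delta')<0$. One small caveat: your twelfth-root shortcut works only because $j\in R$ is already assumed, and then it gives good reduction directly. It is not a general route to semistability: for type $I_6^*$ in residue characteristic $\ge 5$ one has $v(\Delta)=12$, and $K(\Delta^{1/12})/K$ is unramified.

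The ``alternative'' twisting argument has a genuine gap, and it is not where you place it. You assume an $E_0/R$ with good reduction and $j(E_0)=j(E)$ \emph{exactly}, and in general no such curve exists. On a model with unit discriminant $v(j)=3v(c_4)$, so no curve over $\mathbb{Q}_5$ with $j=5$ has good reduction. Similarly, no curve over $\mathbb{Q}_3$ with $j=0$ does, since $1728\Delta=-c_6^2$ forces $v_3(\Delta)$ to be odd. Lifting a smooth curve over $k$ with invariant $\bar j$ only gives $j(E_0)\equiv j\pmod{\pi}$.

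To repair this you must allow $E_0$ over a finite, typically ramified, extension and actually construct it. For example, use the Legendre form $y^2=x(x-1)(x-\lambda)$: in odd residue characteristic, $j\in R$ forces $\lambda\in R$ with $\lambda\not\equiv 0,1$. But that is exactly the semistable-reduction input of your main route, so the twist adds nothing independent. The finiteness issue you call the main obstacle is the easy part, since two curves with the same $j$ are isomorphic over a finite extension anyway.

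Finally, the sentence ``Hence $j\notin R$ is forced whenever inertia has infinite image'' states the converse of what the Tate-curve computation gives. What you showed is that $j\notin R$ implies infinite inertia image. Via N\'eron--Ogg--Shafarevich, that is just the contrapositive of your first direction.
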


In this context, there is a nontrivial group $\Phi_p$ that measures how far the curve $E$ is from having multiplicative or good reduction at $p$. It can be defined as
\[
\Phi_p = \Gal(L/\mathbb{Q}_p^{\mathrm{nr}}),
\]
where $\mathbb{Q}_p^{\mathrm{nr}}$ is the maximal non-ramified extension of $\mathbb{Q}_p$ and  $L$ is the minimal extension of $\mathbb{Q}_p^{\mathrm{nr}}$ over which the type of reduction changes. 
\\~\\
Let us assume that $E/\mathbb{Q}_p$ has potential good reduction. In this case, the group $\Phi_p$ is described by the following proposition.

\begin{proposition}\label{prop:3}
     Let $E/\mathbb{Q}_p$ be an elliptic curve. Let $L|\mathbb{Q}_p^{\mathrm{nr}}$ be the extension previously defined and $\Phi_p:=\Gal(L/\mathbb{Q}_p^{\mathrm{nr}})$. Then for all $p\neq 2,3$ we have:
     {\small
     \begin{enumerate}
         \item $\#\Phi_p = 2\iff \text{ $E$ has Kodaira symbol $I_0^*$} \iff v_p(\Delta) \equiv 6 \mod 12$ 
         \item $\#\Phi_p = 3\iff \text{ $E$ has Kodaira symbol $IV$ or $IV^*$} \iff v_p(\Delta) \equiv 4 \text{ or } 8 \mod 12$
         \item $\#\Phi_p = 4\iff \text{ $E$ has Kodaira symbol $III$ or $III^*$} \iff v_p(\Delta) \equiv 3 \text{ or } 9 \mod 12$
         \item $\#\Phi_p = 6\iff \text{ $E$ has Kodaira symbol $II$ or $II^*$} \iff v_p(\Delta) \equiv 2 \text{ or } 10 \mod 12$
     \end{enumerate}}
If $p=2,3$, then 
\begin{enumerate}
         \item If $p = 3$,  $\Phi_p$ is a cyclic group of order $2,3,4,5,6$ or a non-abelian semidirect product of a cyclic group of order $4$ with a normal subgroup of order $3$.
         \item If $p = 2$, $\Phi_p$ is isomorphic to a subgroup of $\SL_2(\mathbb{F}_3)$ and its order is $2,3,4,6,8$ o $24$. Moreover, $\#\Phi_p = 3$ if, and only if, the Kodaira symbol of $E$ is $IV$ or $IV^*$.
     \end{enumerate}
\end{proposition}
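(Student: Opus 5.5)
This is a classical result, so the plan is to recall how $\Phi_p$ is constructed and then read off its order from the Kodaira type via Tate's algorithm. Since $E$ has potentially good reduction, take any prime $\ell\neq p$. By the criterion of Néron–Ogg–Shafarevich, $L$ is the fixed field of the kernel of $\rho_{\ell^\infty}$ restricted to $I_p$. Hence $\Phi_p\cong \rho_{\ell^\infty}(I_p)$, and this image is finite and independent of $\ell$.

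For $p\geq 5$ the whole extension is tamely ramified. Then $\Phi_p$ is a quotient of the tame inertia group, hence cyclic. Its order $e$ is read off from an explicit twist. Write a minimal model $y^2=x^3+Ax+B$, which we may do since $p\neq 2,3$. Over $\mathbb{Q}_p^{\mathrm{nr}}(\pi^{1/e})$, with $e=12/\gcd(12,v_p(\Delta))$, the substitution $x=\pi^{2k}x'$, $y=\pi^{3k}y'$ with $k=v_p(\Delta)/12$ (now integral in the new valuation) produces a model with unit discriminant. This is exactly the standard fact that $E$ acquires good reduction over $\mathbb{Q}_p^{\mathrm{nr}}(\Delta^{1/12})$.

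Minimality of this extension follows because the semistable reduction only depends on the ramification index $e'$ of $K'$. Good reduction over $K'$ requires $e'\,v_p(\Delta)\equiv 0 \pmod{12}$, since a minimal discriminant of a good model has valuation $0$ and changes of model alter the valuation by multiples of $12$. This gives $\#\Phi_p = 12/\gcd(12,v_p(\Delta))$. Since $E$ has additive potentially good reduction, $0<v_p(\Delta)<12$ for a minimal model, so $v_p(\Delta)\in\{2,3,4,6,8,9,10\}$. These values give $\#\Phi_p=6,4,3,2,3,4,6$ respectively.

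Finally, Tate's algorithm for $p\ge 5$ identifies the Kodaira symbol with $v_p(\Delta)$ in the potentially good case:
\[
II\leftrightarrow 2,\quad III\leftrightarrow 3,\quad IV\leftrightarrow 4,\quad I_0^*\leftrightarrow 6,\quad IV^*\leftrightarrow 8,\quad III^*\leftrightarrow 9,\quad II^*\leftrightarrow 10.
\]
Combining this with the previous step gives the three-way equivalences.

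The main obstacle is the case $p=2,3$, where wild ramification occurs and $\Phi_p$ need not be cyclic. Here I would cite Kraus's classification rather than reprove it. The argument runs as follows. Because $E$ acquires good reduction over $L$, $\Phi_p$ acts faithfully on the reduced curve $\tilde E$ over $\overline{\mathbb{F}}_p$ as a group of automorphisms fixing $O$; this uses Serre–Tate. So $\Phi_p$ embeds in $\Aut(\tilde E)$. For $p=3$ this group is $\mathcal{C}_{12}\cong\mathcal{C}_3\rtimes\mathcal{C}_4$ at $j=0$ and $\mathcal{C}_2$ otherwise. For $p=2$ it is $\SL_2(\mathbb{F}_3)$ at $j=0$. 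Enumerating the possible subgroups, and excluding those incompatible with the structure of inertia (a $p$-group extended by a cyclic group), yields the stated lists.

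The assertion that for $p=2$ one has $\#\Phi_p=3$ if and only if the symbol is $IV$ or $IV^*$ is the delicate point. I would try to verify it via Kraus's tables together with Tate's algorithm. For the forward direction: order $3$ is prime to $2$, so the ramification is tame, and the tame argument above gives $v_2(\Delta)\equiv 4,8 \pmod{12}$, which forces type $IV$ or $IV^*$. The converse is the hardest part of this step and I would check it directly against those tables.
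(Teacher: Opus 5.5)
\textbf{Verdict.} Your $p\ge 5$ argument is correct. At $p=2,3$ there is one factual error and one genuine gap.

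\textbf{The case $p\ge 5$.} Here you do more than the paper, which proves nothing for this proposition: it cites Serre (p.~312) for everything except the last $p=2$ assertion, and Kraus (Th\'eor\`eme~2) for that one. Your twist over $\mathbb{Q}_p^{\mathrm{nr}}(\pi^{1/e})$, combined with the congruence $e'v_p(\Delta)\equiv 0 \pmod{12}$, is Serre's derivation of $\#\Phi_p=12/\gcd(12,v_p(\Delta))$. Two points need a line each.
\begin{itemize}
\item Integrality of the twisted coefficients is exactly where $v_p(j)\ge 0$ enters. It gives $3v_p(A)\ge v_p(\Delta)$ and hence $2v_p(B)\ge v_p(\Delta)$.
\item Excluding $v_p(\Delta)\in\{1,5,7,11\}$ needs a reason. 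The same inequalities contradict these values. Alternatively, $\#\Phi_p=12$ is impossible because $\#\Phi_p$ divides $\#\Aut(\tilde E)\in\{2,4,6\}$.
\end{itemize}

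\textbf{The automorphism group at $p=3$.} For $j(\tilde E)=0$ in characteristic $3$, $\Aut(\tilde E)$ has order $12$ but is non-abelian. It is $\mathcal{C}_3\rtimes\mathcal{C}_4$, with a generator of $\mathcal{C}_4$ acting on $\mathcal{C}_3$ by inversion; it is not $\mathcal{C}_{12}$. This matters. Enumerating subgroups of $\mathcal{C}_{12}$ would allow $\Phi_3\cong\mathcal{C}_{12}$, which is not in the stated list. The correct group gives $\mathcal{C}_2,\mathcal{C}_3,\mathcal{C}_4,\mathcal{C}_6$ and the full non-abelian group. Order $5$ never occurs, since $5\nmid 12$, so the statement's list is a harmless overstatement. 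With this fix, and $\SL_2(\mathbb{F}_3)$ at $p=2$, your Serre--Tate embedding plus subgroup enumeration does prove the group-theoretic claims at $p=2,3$.

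\textbf{The gap: the forward direction of the $p=2$ equivalence.} Your ``tame argument above'' has two halves. The half producing good reduction over $\mathbb{Q}_p^{\mathrm{nr}}(\pi^{1/e})$ uses a short Weierstrass model, so it is unavailable at $p=2$. Only $3v_2(\Delta)\equiv 0\pmod{12}$ survives, i.e.\ $4\mid v_2(\Delta)$.
\begin{itemize}
\item Excluding $12\mid v_2(\Delta)$ needs new input. For example: a tame $\Phi_2\cong\mathcal{C}_3$ acts faithfully on the invariant differential of the good model over $L$, which forces $v_2(\Delta)=4k$ with $3\nmid k$.
\item The step ``$v_2(\Delta)\equiv 4,8 \pmod{12}$ forces $IV$ or $IV^*$'' is false, since the dictionary between $v_p(\Delta)$ and Kodaira symbols is the $p\ge 5$ table. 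For instance, $y^2=x^3+3$ has $v_2(\Delta)=4$. After $y\mapsto y+1$ it becomes $y^2+2y=x^3+2$, and Tate's algorithm stops at Step~3 with symbol $II$ (conductor exponent $4$).
\item What tameness does give is $f_2=2$, so Ogg's formula yields $m=v_2(\Delta)-1$. This gives $IV$ when $v_2(\Delta)=4$. For $v_2(\Delta)=8$ it leaves both $IV^*$ and $I_2^*$, and larger admissible values would point to $I_n^*$ types. So further work is needed.
\end{itemize}
You also leave the converse to Kraus's tables. You should therefore do what the paper does: cite Kraus, Th\'eor\`eme~2, for the whole equivalence at $p=2$, rather than present this partial argument.
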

\begin{proof}
    See \cite[pág. 312]{Serre} for every statement except the last statement of the case $p=2$. Alain Kraus conducted an exhaustive study of the cases $p=2,3$ in \cite{Kraus1990}, but we only need \cite[Théorème 2]{Kraus1990}.
\end{proof}

 According to \cite{SerreTate} and \cite[p.~311, Section 5.6 a)]{Serre}, the action of $I_p$ on $E[\ell]$ is described by the following diagram:
\[
\begin{tikzcd}
I_p \arrow[r, "\varphi_\ell"] \arrow[d, two heads] & {\Aut(E[\ell])} \\
\Phi_p \arrow[ru]                         &       
\end{tikzcd}
\]
where $\Phi_p$ is a finite quotient of inertia whose action is injective if $\ell\geq 3$ and $p\neq \ell$. So, with this hypothesis, the image of inertia is isomorphic to $\Phi_p$.

Let us assume now that $E$ has potential multiplicative reduction. In this case, the curve $E/\mathbb{Q}_p$ is isomorphic to the Tate curve with the same $j$-invariant over a quadratic extension $L/\mathbb{Q}_p$, which is ramified at $p$ because the reduction at $p$ is additive, see \cite[V.5.3]{Silverman1994AdvancedTI}. The group $\Phi_p = \Gal(L/\mathbb{Q}_p)$ is a quotient of the inertia group $I_p$, although injectivity need not hold in this case. By properties of the Tate curve, $E$ has multiplicative reduction over $L$, and therefore its inertia is known.

\section{Variable changes and Tate normal form}\label{section:3}

The study of the images of the Galois representations is not enough to conclude the proofs of the main results. We will use a more computational approach where we will use explicit models for the curves along with standard results on the torsion growth.
\\~\\
The fact that we are going to do explicit computations with models of the elliptic curve causes that very similar arguments and changes of variables are repeated in different proofs. In this section, we gather them together.
\\~\\
We begin with an elliptic curve $E/\mathbb{Q}$ such that there exists a quadratic field $K=\mathbb{Q}(\sqrt{d})|\mathbb{Q}$ for which there exists a point $P\in E(K)[n]\setminus E(\mathbb{Q})$ which is a torsion point of order $n$ with $n\geq 3$ and odd. Suppose moreover that $E(\mathbb{Q})[n] = \{0\}$. We start with a minimal model 
$$
E_1 : y^2 + a_1xy + a_3y = x^3 + a_2x^2 + a_4x + a_6
$$
with integer coefficients such that $a_1,a_3\in\{0,1\}, \quad a_2\in\{-1,0,1\}$ and $\Delta_1$ denotes its discriminant. We perform the change 
$$
x = x',\quad y = y' -\frac{1}{2}\left( a_1x' + a_3 \right).
$$
which leads to the equation
$$
 E': y^2= x^3 + \frac{b_2}{4}x^2 + \frac{b_4}{2}x + \frac{b_6}{4}.
$$
where the formulas for $b_i$ can be found in \cite[Ch. III]{Silverman}. It is easy to see that the discriminant of this equation is still $\Delta$. To obtain an equation with integer coefficients, we perform the change
$$
x' = 4x,\quad y' = 8y
$$
and obtain the equation (in $\mathbb{Z}[x,y]$)
$$
E_2: y^2 = x^3 + b_2x^2 + 8b_4x + 16 b_6.
$$
Let $\Delta_2$ denote the discriminant of the previous equation. We have
$$
2^{12}\Delta_1 = \Delta_2.
$$
Once we reach this point, by (\cite[Corollary 4]{GonTorI}) we have 
$$
    E(K)[n]=  E(\mathbb{Q})[n]\times E^d(\mathbb{Q})[n].
$$
where $E^d$ is the twist
$$
E^d: dy^2= x^3+ b_2x^2 + 8b_4x + 16b_6.
$$
Therefore, $E(K)[n]= E^d(\mathbb{Q})[n]$. Hence the curve $E^d$ has a point of order $n$. It is easy to bring the curve $E^d$ to Weierstrass form
$$
 E_3 : y^2=x^3 +db_2x^2 + 8d^2b_4x + 16d^3b_6.
$$
by a change defined over $\mathbb{Q}$. The change to pass from $E_2$ to $E_3$ is 
$$
    x' = dx,\quad y'= d\sqrt{d} y.
$$
Therefore, the relation between the discriminants is $\Delta_1 2^{12} d^6 = \Delta_3 $. As we saw before, the curve $E_3$ has a point of order $n$, which we denote by $Q= (x_1,y_1)$. Note that, by the Nagell–Lutz theorem, $x_1,y_1\in\mathbb{Z}$. We now bring the curve to Tate normal form.
\\~\\
Any change of variables preserving the Weierstrass form must be of the form
$$
x = u^2x' + r,\quad y = u^3y' + su^2x' + t
$$
The only parameter that changes the discriminant is $u$, so we impose $u=1$. Moreover, we want to send $Q$ to $(0,0)$. Since
$$
(x_1,y_1) \longmapsto \big( x_1-r, y_1-s(x_1-r) -t \big),
$$ 
we need $r= x_1$ and $t=y_1$. This cancels the constant term of the equation. Now we choose $s$ so that the coefficient of $x$ is $0$, which gives the following equation
$$
0 = 8d^2b_4 + 2db_2x_1 + 3x_1^2 -2sy_1.
$$
Thus we obtain
\begin{equation}\label{eq:1}
s = \frac{8d^2b_4 + 2 db_2x_1 + 3x_1^2}{2y_1}.
\end{equation}

Note that $y_1\neq 0$ because $n$ is odd. We obtain the following form
$$
E_4: y^2 + \overline{a}_1xy+ \overline{a}_3y = x^3 + \overline{a}_2x^2,
$$
with
$$
\def\arraystretch{1.5}
\left\{ \begin{array}{lcl} 
\overline{a}_1 & = & 2s\\ 
\overline{a}_2 & = & -s^2+ 3x_1 +db_2 \\ 
\overline{a}_3 & = & 2y_1 \\ 
\Delta_4 & = & 2^{12} d^6\Delta_1 
\end{array} \right.
$$
If $n=3$ the point $(0,0)$ of this equation is a $3$-torsion point. This is equivalent to saying that the curve has a contact of order $3$ with the line $y=0$, which implies that $\overline{a}_2 = 0$. So the equation that we get is
$$
E_4: y^2 + \overline{a}_1xy+ \overline{a}_3y = x^3.  
$$
If $n\geq 5$, we can reduce the curve to obtain the Tate normal form. In order to do so, it only remains to make the coefficients of $y$ and $x^2$ equal. For this purpose we pass to the equation
$$
E_5: y^2 + \Tilde{a}_1xy+ \Tilde{a}_3y = x^3 + \Tilde{a}_2x^2,
$$
by means of the change of variables
$$
x \longmapsto \left(\frac{\overline{a}_3}{\overline{a}_2}\right)^2 x, \qquad y \longmapsto \left(\frac{\overline{a}_3}{\overline{a}_2}\right)^3 y.
$$
In this way we obtain
$$
\Tilde{a}_1 = \dfrac{\overline{a}_1\overline{a}_2}{\overline{a}_3}, \qquad 
\Tilde{a}_2 = \Tilde{a}_3 = \dfrac{\overline{a}_2^3}{\overline{a}_3^2}.
$$

\

If we denote $b := -\Tilde{a}_2= -\Tilde{a}_3$ and $c:= 1- \Tilde{a}_1$, we obtain the Tate normal form
$$
\mathcal{T}_{b,c}:  y^2+ (1-c)xy - by = x^3 - bx^2
$$
with the following relation between discriminants
\begin{equation}\label{eq:2}
\Delta_{b,c}\overline{a}_3^4 = 2^{12}b^4 d^6\Delta_1.
\end{equation}
Moreover, for values $n\leq 9$, we will use that these curves can be parametrized by a single parameter. In particular we will use the following parametrizations
\begin{itemize}
    \item $n = 4$. In this case, $c = 0$ and $\Delta_{b,c}= b^4(1+ 16b)$
    \item $n = 5$. In this case, $b = c$ and $\Delta_{b,c}= b^5(b^2 - 11b -1)$.
    \item $n = 7$. In this case, there exists $t\in\mathbb{Q}$ such that $b = t^2(t-1)$ and $c = t(t-1)$. Moreover, $\Delta_{b,c} = t^7(t-1)^7(t^3-8t^2 + 5t + 1)$.
    \item $n = 8$. In this case, there exists $t\in\mathbb{Q}$ such that $b = (t-1)(2t -1)$ and $c = b/t$. Moreover, $\Delta_{b,c} =  t^{-4}(1-2t)^4(t-1)^8(8(t-1)t + 1)$
    \item $n = 9$. In this case, there exists $t\in\mathbb{Q}$ such that $b =t^5 - 2t^4 + 2t^3 - t^2$ and $c =  t^3 - t^2$. Moreover, $\Delta_{b,c} = t^9(t-1)^9(t^2-t+1)^3(t^3-6t^2 + 3t + 1)$.
\end{itemize}
The parametrizations can be found in \cite[Chapter 4, section 4]{Husemoller}

\begin{remark}\label{rmk:2}
Note that if in the original minimal model $a_1=a_3= 0$, then the equations of $E_1$ and $E'$ coincide and therefore the coefficients are integers. Hence there is no need to perform the change to obtain $E_2$. In this case we may proceed as before and the equations we obtain are the following.
\\~\\
The equation of $E_3$ is
$$
E_3: y^2 = x^3+ da_2x^2 + d^2a_4x + d^3a_6
$$
and it has an integral $n$-torsion point $Q= (x_1, y_1)$. Following the previous procedure we arrive at the curve 
$$
E_4: y^2 + \overline{a}_1xy+ \overline{a}_3y = x^3 + \overline{a}_2x^2,
$$
with
$$
\def\arraystretch{1.5}
\left\{ \begin{array}{lcl} 
s &=& \displaystyle   \frac{d^2a_4 + 2da_2x_1 + 3x_1^2}{2y_1} \\
\overline{a}_1 & = & 2s\\ 
\overline{a}_2 & = & -s^2+ 3x_1 +da_2 \\ 
\overline{a}_3 & = & 2y_1 \\ 
\Delta_4 & = &  d^6\Delta_1 
\end{array} \right.
$$
The rest is the same as in the previous case, except that the final relation between discriminants is 
\begin{equation}\label{eq:3}
  \Delta_{b,c}\overline{a}_3^4 = b^4 d^6\Delta_1.  
\end{equation}
\end{remark}

\begin{remark}\label{rmk:3}
\begin{itemize}
    \item Note that in the procedure we have just described, if the equation $E_4$ has $p$-integral coefficients and $2\neq p\nmid d$, then the equation of $E_4$ is minimal, since the original equation was minimal, the twist was taken over a field in which $p$ does not ramify, and the valuation of the discriminant is the same. Moreover, if $p= 2$ and we are in the hypotheses of the previous remark, we can argue in the same way because the change that alters the $2$-adic valuation of the discriminant was not performed.
    \item Similarly, if $p\nmid d_K$ then the reduction type of the curve and of the twist is the same (see \cite[VII.5.4.(a)]{Silverman}).
\end{itemize}
\end{remark}
\begin{remark}\label{rmk:4}
  If $E$ has additive reduction at a prime $p$ such that $p\nmid n$ and we call $Q$ the point of $n$-torsion of the twisted curve, then $Q$ reduces to a singular point.
  \\~\\
  Indeed, suppose that its reduction is not singular, that is,
$$
\Tilde{Q} \in \Tilde{E}_{ns}.
$$
After reducing the curve, this point must either have order a divisor of $n$. Since the reduction of the curve at $p$ is additive, the group
$$
\Tilde{E}_{ns} \cong \overline{\mathbb{F}_p}^{+},
$$
which has no $d$-torsion for $d$ a proper divisor of $n$. Therefore, the point $Q$ must reduce to the identity element. However, the kernel of reduction does not contain points of order coprime to $p$, see \cite[VII.Proposition 3.1(a)]{Silverman}. Hence the point $Q$ must reduce to a singular point.
\end{remark}
After making these changes of variables, the proofs will consist of appropriately applying Tate's Algorithm to one or several of the previous models. Every reference in what follows to a concrete step of Tate's Algorithm will refer to its presentation of the Appendix of this article. The following result will also be very useful during the proofs in order to bound the valuation of the minimal discriminant.
\begin{theorem}[Ogg's Formula]\label{thm:6}
    Let $E/K$ be an elliptic curve defined over a local field $K$, with ring of integers $R$ and residual field $k$. Consider 
    \begin{itemize}
        \item $v_K(\Delta)$.  The valuation of the minimal discriminant of $E/K$.
        \item $f(E/K)$. The exponent of the conductor of $E/K$.
        \item $m(E/K)$. The number of components, defined over $\overline{k}$ and counted without multiplicity, on the special fiber of a minimal proper regular model of $E$ over $R$.
    \end{itemize}
    Then
    $$
    v_K(\Delta) = f(E/K) + m(E/K) -1
    $$
\end{theorem}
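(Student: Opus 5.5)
The statement is a classical result (due to Ogg, with a complete proof in all residue characteristics by Saito), and in the paper it will simply be quoted. If I were to prove it, I would proceed as follows. First, reduce to the case where the residue field $k$ is algebraically closed. I would pass to the completion of the maximal unramified extension of $K$. This changes none of the three quantities. The minimal discriminant and its valuation are preserved, since an unramified base change keeps a minimal model minimal. The conductor exponent $f(E/K)$ is defined through the inertia action on the Tate module, and inertia is unchanged. The number $m(E/K)$ is by definition counted over $\overline{k}$. Once $k$ is algebraically closed, Tate's Algorithm (as in the Appendix) applies to a minimal Weierstrass model. It determines the Kodaira type and the special fibre of the minimal proper regular model, and hence $m(E/K)$ directly.

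Next, handle the good and multiplicative cases directly. For good reduction we have $v_K(\Delta)=0$, $f=0$ and $m=1$. For type $I_n$ we have $v_K(\Delta)=n$, $f=1$, and the special fibre is an $n$-gon, so $m=n$. In both cases the formula holds. For additive reduction in residue characteristic $\neq 2,3$, the inertia action on $T_\ell E$ factors through the tame quotient, with image $\Phi$ as in Proposition \ref{prop:3}. Since $E[\ell]^{I}=0$, the conductor exponent is $f=2$. I would then go through Tate's Algorithm type by type, reading off $v_K(\Delta)$ and $m$:
$II$: $v_K(\Delta)=2$, $m=1$;
$III$: $v_K(\Delta)=3$, $m=2$;
$IV$: $v_K(\Delta)=4$, $m=3$;
$I_n^*$: $v_K(\Delta)=6+n$, $m=5+n$;
$IV^*$: $v_K(\Delta)=8$, $m=7$;
$III^*$: $v_K(\Delta)=9$, $m=8$;
$II^*$: $v_K(\Delta)=10$, $m=9$.
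In every case $v_K(\Delta)=2+m-1$. For the $I_n^*$ case with potentially multiplicative reduction, I would instead use the quadratic twist to a Tate curve. This gives the same $f=2$, since the tame character is nontrivial on inertia.

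The main obstacle is residue characteristic $2$ and $3$. There the inertia image can have wild part, and $f=2+\delta$, where $\delta$ is the Swan conductor of $E[\ell]$. Tate's Algorithm still gives $m$, but the valuation $v_K(\Delta)$ is no longer determined by the Kodaira type. For these characteristics I would follow Saito's approach. One expresses $-\mathrm{Art}(\mathcal{X}/R)=\chi(\mathcal{X}_{\bar\eta})-\chi(\mathcal{X}_{\bar s})+\delta$ for the minimal regular model $\mathcal{X}$. This quantity equals $f+m-1$, because $\chi(\mathcal{X}_{\bar s})=m+1$ and $\chi(\mathcal{X}_{\bar\eta})=0$. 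One then proves that the Artin conductor of the arithmetic surface equals $-v_K(\Delta)$, by comparing the discriminant of the Weierstrass model with Deligne's discriminant of $\mathcal{X}$. Alternatively, one could attempt a direct verification through Kraus's explicit classification \cite{Kraus1990}, computing $\delta$ case by case. That approach is laborious, and I expect it to be the hard step.
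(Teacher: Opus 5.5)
You anticipated the paper correctly: it gives no argument of its own and simply cites Silverman's \emph{Advanced Topics}, IV.11.1. Your outline is the standard route behind that citation: a type-by-type check via Tate's algorithm in residue characteristic $\ge 5$, and Saito's theorem in characteristics $2$ and $3$. It is sound for $p\ge 5$, with one proviso. When you assert $E[\ell]^{I}=0$, take $\ell\ge 5$ (or work with $V_\ell$). For $\ell=2,3$ the component group can contribute inertia invariants; for example, type $I_0^*$ with all $2$-torsion rational. In residue characteristics $2$ and $3$, your argument, like the paper's, is ultimately a citation. The identity $-\mathrm{Art}(\mathcal{X}/R)=v_K(\Delta)$ is the entire content of Saito's work, not a step you can simply carry out.

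There is, however, a sign error in the one identity you write down, and as stated it does not give the formula. With your own values $\chi(\mathcal{X}_{\bar\eta})=0$ and $\chi(\mathcal{X}_{\bar s})=m+1$, your expression $\chi(\mathcal{X}_{\bar\eta})-\chi(\mathcal{X}_{\bar s})+\delta$ equals $\delta-m-1$, not $f+m-1=m+1+\delta$.

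The correct convention is $\mathrm{Art}(\mathcal{X}/R)=\chi(\mathcal{X}_{\bar\eta})-\chi(\mathcal{X}_{\bar s})+\sum_q(-1)^q\,\mathrm{Sw}\,H^q(\mathcal{X}_{\bar\eta},\mathbb{Q}_\ell)$. Only $H^1$ can be wildly ramified, so the alternating Swan term is $-\delta$. Hence $-\mathrm{Art}(\mathcal{X}/R)=\chi(\mathcal{X}_{\bar s})-\chi(\mathcal{X}_{\bar\eta})+\delta=m+1+\delta=f+m-1$.

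Also, $\chi(\mathcal{X}_{\bar s})=m+1$ relies on $H^1(\mathcal{X}_{\bar s})=0$. This holds only for additive reduction, where the special fibre is topologically a tree of rational curves; for $I_n$ one has $\chi(\mathcal{X}_{\bar s})=n$. So restrict this step explicitly to the additive case. That is all you need, since you handled good and multiplicative reduction directly, and those computations hold in every residue characteristic.
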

\begin{proof}
    See \cite[IV.11.1] {Silverman1994AdvancedTI}.
\end{proof}
Now we proceed to prove Theorem \ref{thm:4} and Theorem \ref{thm:5}. Let $E$ be an elliptic curve such that there exists $P\in E(K)[\ell]\setminus E(\mathbb{Q})$, where $K=\mathbb{Q}(\sqrt{d})$ and $\ell$ is a prime number. We will separate the proofs by the value of $\ell\in\{3,5,7\}$.

\section{Prime $\ell = 3$}
We start by proving the statement of Theorem \ref{thm:4} where $\ell =3$. The following result proves it in one direction.
\begin{proposition}\label{prop:4}
Let $p\neq 3$ be a prime and $E/\mathbb{Q}$ be an elliptic curve such that there exists a quadratic field $K$ and a $3$-torsion point $P\in E(K)[3]\setminus E(\mathbb{Q})$. Suppose moreover that $3|d_K$. Then $E$ has additive reduction at $p$ with Kodaira symbol different from $IV$ or $IV^*$.
\end{proposition}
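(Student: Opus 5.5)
The statement should be read with $p\mid d_K$ in place of $3\mid d_K$ (the displayed hypothesis is evidently a typo, since $p\neq 3$ is the prime under study). The plan is to prove it by a Galois-theoretic argument on the image of inertia, with no need for explicit models.

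\textbf{Step 1 (additive reduction).} Since $p\neq 3=\ell$ and $p$ ramifies in $K$, Theorem \ref{thm:3}(1) gives that $E$ has additive reduction at $p$. So it only remains to exclude the Kodaira symbols $IV$ and $IV^*$.

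\textbf{Step 2 (reduce to potentially good reduction with $\#\Phi_p=3$).} Suppose, for contradiction, that the Kodaira symbol at $p$ is $IV$ or $IV^*$. Additive reduction with potentially multiplicative reduction always has Kodaira symbol $I_n^*$. Hence in our case $E$ has potentially good reduction at $p$. By Proposition \ref{prop:3} this forces $\#\Phi_p=3$:
\begin{itemize}
\item for $p\geq 5$ this is item (2) of the proposition;
\item for $p=2$ it is the last assertion of the $p=2$ case, taken from Kraus.
\end{itemize}
Since $\ell=3\geq 3$ and $p\neq\ell$, the action of $\Phi_p$ on $E[3]$ is faithful. Therefore $\rho_3(I_p)\cong\Phi_p$ is cyclic of order $3$.

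\textbf{Step 3 (contradiction from ramification of $K$).} Fix a prime $\mathfrak{P}$ of $\mathbb{Q}(E[3])$ above $p$. Because $P\in E(K)\setminus E(\mathbb{Q})$, the field $K=\mathbb{Q}(P)$ lies in $\mathbb{Q}(E[3])$. Because $p$ ramifies in $K$, the restriction map $I_p\to I(\mathfrak{p}|p)\subset\Gal(K/\mathbb{Q})$ is surjective onto the group of order $2$. So there is $\sigma\in I_p$ acting nontrivially on $K$.

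Now $P$ is defined over $K$ and $\sigma$ restricts to the nontrivial automorphism of $K$. Since $\bar P\neq P$, Galois conjugation gives $\sigma(P)=\bar P$. Moreover, the trace $P+\bar P$ is a rational $3$-torsion point, and $E(\mathbb{Q})[3]=0$ by Corollary 4 of \cite{GonTorI} as used in Section \ref{section:3} (in the strict setting). Hence $\sigma(P)=-P$.

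Thus $\rho_3(\sigma)$ has eigenvalue $-1$, so $\rho_3(\sigma)$ is nontrivial of even order. This contradicts $\rho_3(I_p)$ having order $3$. (In the situation $\mathcal{C}_3\times\mathcal{C}_3\leq E_\tors(K)$, where $E(\mathbb{Q})[3]\neq 0$, we have $K=\mathbb{Q}(\sqrt{-3})$, which is unramified at $p\neq 3$; so that case is vacuous.)

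\textbf{Main obstacle.} The delicate point is Step 2 at $p=2$, where the faithfulness and the identification $\#\Phi_2=3\iff IV,IV^*$ must be taken from Kraus's theorem rather than from the discriminant congruences. A second point needing care is justifying $\sigma(P)=-P$, i.e.\ that $E(\mathbb{Q})[3]$ is trivial in the non-vacuous cases.
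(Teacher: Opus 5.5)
Your proposal is correct and follows essentially the paper's argument: additive reduction from Theorem~3(1); potentially good reduction forcing $\#\Phi_p=3$ via Proposition~3 (Kraus for $p=2$); faithfulness of $\Phi_p$ on $E[3]$; and a contradiction with the even order that the ramification of $K\subset\mathbb{Q}(E[3])$ at $p$ forces on inertia. You are also right that the hypothesis should read $p\mid d_K$. The only difference is that the paper gets $2\mid\#I(\mathfrak{P}|p)$ directly from the tower $L\subset LK\subset\mathbb{Q}(E[3])$ with $[LK:L]=2$. Your detour through $\sigma(P)=-P$, and hence through $E(\mathbb{Q})[3]=0$, is valid but unnecessary: surjectivity of $I(\mathfrak{P}|p)$ onto $\Gal(K/\mathbb{Q})$ already gives even order.
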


\begin{proof}
First, note that the first statement of Theorem \ref{thm:3} already shows that $E$ has additive reduction at $p$. Let $\mathfrak{P}|p$ be a prime ideal of $\mathbb{Q}(E[3])$. Let $I(\mathfrak{P}|p)$ be the inertia group and let $L = \mathbb{Q}(E[3])^{I(\mathfrak{P}|p)}$ be the maximal unramified subextension.
We have the following diagram
\[\begin{tikzcd}
	& {\mathbb{Q}(E[3])} \\
	& LK \\
	L \\
	&& K \\
	& {\mathbb{Q}}
	\arrow["{\# I(\mathfrak{P}|p)}"', no head, from=1-2, to=3-1]
	\arrow[no head, from=2-2, to=1-2]
	\arrow[no head, from=2-2, to=4-3]
	\arrow["2"', no head, from=3-1, to=2-2]
	\arrow[no head, from=3-1, to=5-2]
	\arrow[no head, from=4-3, to=1-2]
	\arrow["2", no head, from=5-2, to=4-3]
\end{tikzcd}\]

We proceed by contradiction. Let us assume that the curve has Kodaira symbol $IV$ or $IV^*$ at $p\neq 3$, the $j$-invariant of $E$ is integral (see \cite[IV.9.Table 4.1]{Silverman1994AdvancedTI}). This implies that $E$ has potentially good reduction, hence $I(\mathfrak{P}|p) = \Phi_p$. The diagram shows that $2 \mid \#\Phi_p$. However, since the Kodaira symbol of $E$ at $p$ is $IV$ or $IV^*$, Proposition \ref{prop:3} implies that $\#\Phi_p = 3$, which yields a contradiction.
\end{proof}
In order to prove the reciprocal we need to separate the proof in the case $p\neq 2$ and $p=2$. The following lemmas complete the proof.

\begin{lemma}\label{lemma:1}
Let $E/\mathbb{Q}$ be an elliptic curve such that there exists a quadratic field $K$ and a point $P\in E(K)[3]\setminus E(\mathbb{Q})$. If the curve $E$ has additive reduction at a prime $p\neq 2,3$ and the Kodaira symbol at $p$ is neither $IV$ nor $IV^*$, then $p \mid d_K$.
\end{lemma}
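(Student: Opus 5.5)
The plan is to argue by contradiction using the inertia action on $E[3]$, in the spirit of the proof of Proposition \ref{prop:4}. Suppose $E$ has additive reduction at $p\neq 2,3$ with Kodaira symbol neither $IV$ nor $IV^*$, but $p\nmid d_K$. Then $p$ is unramified in $K$, so every element of $I_p$ (for a prime of $\mathbb{Q}(E[3])$ over $p$) acts trivially on $K$. Hence $I_p$ fixes the point $P\in E(K)[3]\setminus\{0\}$. Moreover, $\rho_3(I_p)\subset \SL_2(\mathbb{F}_3)$, because $\det\rho_3$ is the mod $3$ cyclotomic character, which is unramified at $p\neq 3$. So the goal is to show that $\rho_3(I_p)$ cannot fix a nonzero vector of $E[3]$. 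We split according to the type of potential reduction, using Proposition 5.5 of Silverman as recalled in Section \ref{section:2}.

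\emph{Potentially good reduction.} Since $p\neq 3$ and $\ell=3\geq 3$, the diagram of Section \ref{section:2} gives $\rho_3(I_p)\cong\Phi_p$. The reduction is additive, so $\Phi_p$ is nontrivial. By Proposition \ref{prop:3} (with $p\geq 5$), the hypothesis on the Kodaira symbol excludes $\#\Phi_p=3$, so $\#\Phi_p\in\{2,4,6\}$ is even. Thus $\rho_3(I_p)$ contains an element of order $2$ of $\SL_2(\mathbb{F}_3)$. The only such element is $-I$: an involution $g$ with $\det g=1$ has both eigenvalues equal to $\pm1$, so equal to each other, and since $g$ is diagonalizable in characteristic $\neq 2$, $g=\pm I$. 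But $-I$ fixes no nonzero vector, so $-I$ cannot fix $P$, a contradiction.

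\emph{Potentially multiplicative reduction} (Kodaira symbol $I_n^*$). By Section \ref{section:2}, $E$ becomes isomorphic to a Tate curve $E'$ over a quadratic extension ramified at $p$. Equivalently, $E$ is the quadratic twist of $E'$ by a character $\chi$ with $\chi|_{I_p}$ nontrivial. By Proposition \ref{prop:1}(1), $E'$ has multiplicative reduction, so $I_p$ acts on $E'[3]$ through unipotent matrices $U_\sigma$ (trivial or of order $3$). Hence $\rho_3(\sigma)=\chi(\sigma)U_\sigma$ for $\sigma\in I_p$. Choose $\sigma\in I_p$ with $\chi(\sigma)=-1$. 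A vector fixed by $-U_\sigma$ would be an eigenvector of $U_\sigma$ with eigenvalue $-1$, which is impossible since all eigenvalues of a unipotent matrix equal $1$. This contradicts $\sigma(P)=P$.

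Both cases give a contradiction, so $p\mid d_K$. The step needing the most care is the potentially multiplicative case: one must justify that the twisting character is ramified at $p$ (this is exactly where additivity of the reduction enters) and that the inertia action on $E'[3]$ is unipotent. If the Galois-theoretic description there proves awkward, the fallback is the explicit route of Section \ref{section:3}. Since $p\nmid d_K$ and $p\neq2$, Remark \ref{rmk:3} says the twist $E^d$, which has a rational $3$-torsion point when $E(\mathbb{Q})[3]=0$, has the same Kodaira type at $p$ and minimal model $E_4: y^2+\overline{a}_1xy+\overline{a}_3y=x^3$. By Remark \ref{rmk:4} the point $(0,0)$ reduces to the singular point, so $p\mid\overline{a}_3$ and $p\mid\overline{a}_1$. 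Running Tate's Algorithm on $E_4$ then forces the type $IV$ or $IV^*$, again contradicting the hypothesis.
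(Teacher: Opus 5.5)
Your argument is correct, and it takes a genuinely different route from the paper.

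\textbf{The paper's route.} The paper proves this lemma computationally. It passes to the twist with a rational $3$-torsion point, written as $E_4\colon y^2+\overline{a}_1xy+\overline{a}_3y=x^3$; this model is minimal and has the same reduction type at $p$ because the twist is unramified there. Step 2 of Tate's algorithm gives $p\mid s$. Minimality forces $v_p(y_1)\in\{0,1,2\}$, and then $v_p(\Delta_1)=3v_p(y_1)+v_p(4s^3-27y_1)\in\{0,4,8\}$. Finally it excludes $I_2^*$ because the cubic in Step 6 is $P(T)=T^3$. This is your fallback, and your sketch of it skips exactly that last point, which is the only non-routine step.

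\textbf{Your route.} Your main argument replaces all the model-chasing with one observation. If $p\nmid d_K$, then $I_p$ fixes $P\neq 0$. On the other hand, $\rho_3(I_p)\subset\SL_2(\mathbb{F}_3)$ contains the unique involution $-I$ as soon as the reduction is additive and not of type $IV$ or $IV^*$. In the potentially multiplicative case this also holds directly, since $(-U_\sigma)^3=-I$, so your two cases merge. Combined with the inertia argument of Proposition \ref{prop:4}, this gives a clean criterion: for $p\neq 3$, $p\mid d_K$ if and only if $\#\rho_3(I_p)$ is even.

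\textbf{What each approach buys.} Your route is shorter and more conceptual, and it works verbatim at $p=2$. Proposition \ref{prop:3} (Kraus) still makes $\#\Phi_2$ even away from $IV$ and $IV^*$, and your Tate-curve twist argument only uses $p\neq 3$. So it also subsumes the paper's separate computational lemma for $p=2$. The paper's approach instead yields explicit information about the model, namely $v_p(y_1)$ and $v_p(\Delta_1)$. It also provides a computational template that the paper reuses where the inertia image alone does not decide the question, e.g.\ for $K=\mathbb{Q}(\sqrt5)$ and $K=\mathbb{Q}(\sqrt{-7})$, and for $p=3$ in Theorem \ref{thm:5}.
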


\begin{proof} 
Let $p\geq 5$ be a prime and suppose that $E$ has additive reduction at $p$ and that the Kodaira symbol is neither $IV$ nor $IV^*$. We argue by contradiction and assume that $p$ does not ramify in $K$. We begin with a minimal equation at $p$
$$
E_1: y^2+a_1xy + a_3y = x^3+a_2x^2+a_4x+a_6.
$$
Following the procedure of Section \ref{section:3} for $n=3$, we obtain a quadratic twist over $K$
$$
E_4:y^2+\overline{a}_1xy + \overline{a}_3y = x^3
$$
Since $\overline{a}_2=0$, it follows that $s$ must be integral. Therefore, the coefficients of $E_4$ are integral and, since the twist we have made is unramified, its equation must be minimal and have the same type of reduction as $E$ (see Remark \ref{rmk:3}).
\\~\\
As $E$ has additive reduction at $p\neq 3$, $(0,0)$ reduces to a singular point (see Remark \ref{rmk:4}). Applying steps 1 and 2 of Tate's algorithm to $E_4$ we obtain
$$
0<v_p(\overline{a}_1^2+4\overline{a}_2) = 2v_p(\overline{a}_1) = 2v_p(s).
$$
Since the equation of $E_4$ is minimal, $v_p(\overline{a}_3) = v_p(y_1)\in\{0,1,2\}$. Otherwise, $v_p(\overline{a}_i)\geq i$ for $i\in\{1,2,3,4,6\}$. On the other hand, we have
$$
v_p(\Delta_1) = v_p(\Delta_4) = 3v_p(y_1) + v_p(4s^3-27y_1).
$$

If $v_p(y_1) = 0$, we obtain $v_p(\Delta_1)=0$, which contradicts the fact that the curve has additive reduction. If $v_p(y_1)=1$, then $v_p(\Delta_1)=4$ and, since $p\neq 2,3$, Tate's algorithm implies that the Kodaira symbol of $E$ is $IV$ (see \cite[IV.Table 4.1]{Silverman1994AdvancedTI}). If $v_p(y_1)=2$, then $v_p(\Delta_1)=8$. Then the Kodaira symbol of $E$ is $IV^*$ or $I_2^*$. However, $I_2^*$ cannot happen. If we apply Tate's Algorithm to the model $E_4$, we reach step $6$ without making any change of variables. The polynomial $P(T)$, defined at this step, is $P(T) = T^3$ because of the shape of the model $E_4$. This implies that the algorithm cannot stop at the step $7$, which implies that the Kodaira symbol of the curve cannot be $I_n^*$. 
\end{proof}
We now prove the case $p=2$.
\begin{lemma}
Let $E/\mathbb{Q}$ be an elliptic curve such that there exists a quadratic field $K = \mathbb{Q}(\sqrt{d})$ and a point $P\in E(K)[3]\setminus E(\mathbb{Q})$. Suppose that $E$ has additive reduction at $2$ and that its Kodaira symbol at $2$ is neither $IV$ nor $IV^*$. Then $2$ ramifies in $K$.
\end{lemma}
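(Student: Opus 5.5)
We argue by contradiction, as in Lemma \ref{lemma:1}: suppose $2\nmid d_K$, so $d\equiv 1 \pmod 4$ and $2$ is unramified in $K$. The argument is the same as for $p\geq 5$: bring $E$ to the shape $E_4: y^2+\overline{a}_1xy+\overline{a}_3y=x^3$ and use Tate's algorithm at $p=2$ to show that the Kodaira symbol must be $IV$ or $IV^*$. The only real difficulty is keeping control of the $2$-adic integrality and minimality of the model, because the passage $E_1\to E_2$ in Section \ref{section:3} multiplies the discriminant by $2^{12}$.

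\textbf{Step 1: a $2$-integral model.} We avoid the completion of the square, since over $\mathbb{Z}_2$ it is not needed. Because $2$ is unramified in $K$, the quadratic twist $E^d$ is isomorphic to $E$ over an unramified extension of $\mathbb{Q}_2$. By Remark \ref{rmk:3}, $E^d$ therefore has the same reduction type, Kodaira symbol and minimal discriminant valuation at $2$ as $E$. We take a $2$-minimal model of $E^d$ over $\mathbb{Z}_{(2)}$; it has a rational point $Q$ of order $3$. By Remark \ref{rmk:4} (valid since $2\nmid 3$), $Q$ reduces to the singular point. We then apply a change $x=x'+r$, $y=y'+sx'+t$ with $r,s,t\in\mathbb{Z}_{(2)}$ sending $Q$ to $(0,0)$ and killing $a_4$. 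Since $(0,0)$ then has order $3$, we also get $a_2=0$ and $a_6=0$. The key claim is that $Q$ itself has $2$-integral coordinates on a minimal model, since a $3$-torsion point cannot lie in the formal group at $2$, and that the required $s$ is $2$-integral. For $s$ we argue exactly as in Lemma \ref{lemma:1}: the condition $\overline{a}_2=0$ forces $s^2=3x_1+(\text{integral})$, so $s$ is integral. The resulting model $y^2+\overline{a}_1xy+\overline{a}_3y=x^3$ has integral coefficients, is obtained by a $u=1$ change from a minimal model, and is therefore minimal at $2$ with $v_2(\Delta)=v_2(\Delta_1)$.

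\textbf{Step 2: Tate's algorithm.} The singular point is $(0,0)$. Additive reduction (steps 1 and 2) gives $2\mid\overline{a}_1$ and $2\mid\overline{a}_3$, and the discriminant is $\Delta=\overline{a}_3^3(\overline{a}_1^3-27\overline{a}_3)$. Minimality forces $v_2(\overline{a}_3)\in\{1,2\}$, since $v_2(\overline{a}_3)\geq 3$ together with $v_2(\overline{a}_1)\geq1$ would give $v_2(a_i)\geq i$ for all $i$.

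If $v_2(\overline{a}_3)=1$, then $v_2(a_6)=v_2(0)\geq 2$ holds trivially, so we pass step 3. At step 4, $b_8=-\overline{a}_3^2\cdot 0+\dots$; we compute $b_8=a_1^2a_6+4a_2a_6-a_1a_3a_4+a_2a_3^2-a_4^2=0$, hence $v_2(b_8)\geq 3$ and we pass step 4. At step 5, $b_6=\overline{a}_3^2$ has valuation $2$, so the algorithm stops with symbol $IV$.

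If $v_2(\overline{a}_3)=2$, steps 3 to 5 are passed: $v_2(b_6)=4\geq 3$, and $a_1,a_2,a_3,a_4,a_6$ have valuations at least $1,1,2,3,4$ since $a_2=a_4=a_6=0$. At step 6 we get $P(T)=T^3$, which has a triple root, so no $I_0^*$ or $I_n^*$ occurs and we go to step 8. Here the polynomial is $Y^2+(\overline{a}_3/4)Y$ modulo $2$; since $\overline{a}_3/4$ is a unit, it has distinct roots, and the symbol is $IV^*$.

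Both cases contradict the hypothesis, which proves the lemma.

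\textbf{Main obstacle.} The delicate point is Step 1 at $p=2$: justifying that the $3$-torsion point and the parameter $s$ are $2$-integral on a minimal model without using the non-minimal model $E_2$. If the direct argument for $s$ fails, I would fall back on working with $E_2$ and Remark \ref{rmk:2}, tracking the extra factor $2^{12}$, and then finding a $u=2$ change that restores minimality before running Tate's algorithm.
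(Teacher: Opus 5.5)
Your proof is correct. Its second half matches the paper's: you run Tate's algorithm on a $2$-minimal model $y^2+\overline{a}_1xy+\overline{a}_3y=x^3$, minimality forces $v_2(\overline{a}_3)\in\{1,2\}$, and the two cases give $IV$ and $IV^*$.

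Where you differ is in how that model is produced.

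\textbf{The paper's route.} It runs the Section~\ref{section:3} construction. It completes the square, passing to $E_2$, which costs a factor $2^{12}$ in the discriminant. Integrality of $(x_1,y_1)$ comes from Nagell--Lutz, and integrality of $s$ from $\overline{a}_2=-s^2+3x_1+db_2=0$. It then uses $12+v_2(\Delta_1)=4+3v_2(y_1)+v_2(4s^3-27y_1)$ to get $v_2(y_1)\geq 2$. So $E_4$ is non-minimal by exactly one step, and a $u=2$ rescaling gives the minimal model $E_4'$. This is precisely your fallback.

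\textbf{Your route.} You stay on a $2$-minimal model of $E^d$ and use only $u=1$ changes, so minimality is automatic. Integrality of $Q$ comes from the absence of $3$-torsion in the kernel of reduction (Remark~\ref{rmk:4}), and integrality of $s$ from the flex condition. This direct argument does go through, so the obstacle you flag is not a real one.

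One correction is needed. Without completing the square, the condition $\overline{a}_2=0$ reads $s^2+a_1s=a_2+3x_1$ (transformation formulas with $u=1$, $r=x_1$, $t=y_1$), not $s^2=3x_1+(\text{integral})$. The conclusion survives: $s$ is a rational root of a monic polynomial over $\mathbb{Z}_{(2)}$, hence $2$-integral.

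\textbf{What each buys.} Your version is uniform in $p\neq 3$: it proves Lemma~\ref{lemma:1} verbatim as well, with $I_n^*$ excluded by $P(T)=T^3$. It also replaces the paper's unexplained ``easy'' valuation count with a structural reason. The paper's version has the advantage of staying within the explicit formulas it reuses for $\ell=5,7$ and for Theorem~\ref{thm:5}.
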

\begin{proof} 
Arguing by contradiction, suppose that $2$ does not ramify in $K$. In particular $v_2(d)=0$. We start with a minimal model of $E$ at $2$,
$$
E_1 : y^2 + a_1xy + a_3y = x^3 + a_2x^2+a_4x +a_6.
$$

Following the procedure explained at the beginning of the previous proof, we obtain the quadratic twist
$$
y^2+\overline{a}_1xy + \overline{a}_3y = x^3 + \overline{a_2}x^2,
$$
where $\overline{a}_2 = 0$, which implies that the equation has integral coefficients. In this case, the relation between the valuations of the discriminants is
$$
12+ v_2(\Delta_1) = v_2(\Delta_4) = 4 + 3v_2(y_1) + v_2(4s^3-27y_1),
$$ 
It is easy to use this equation to prove that $v_2(y_1)\geq 2$. Thus $v_2(\overline{a}_i)\geq i$, and therefore $E_4$ is not minimal. Performing a change of variables to decrease the valuation, we obtain the curve
$$
E_4':y^2+\overline{a}_{1,1}xy + \overline{a}_{3,3}y = x^3,
$$
with $\overline{a}_{i,j} = \dfrac{\overline{a}_i}{2^j}$. Note that this model is minimal because the valuation of its discriminant is equal to $v(\Delta_1)$ and $E_1$ is a minimal model.
\\~\\
We apply Tate's algorithm to this model, of which we only use its shape, the fact that its coefficients are integral, and that the reduction is additive. The latter follows from the fact that $E$ and its twist have the same type of reduction (see Remark \ref{rmk:3}). If we apply Tate's algorithm, all the conditions are satisfied until step $5$, where we have two possibilities: either $v_2(\overline{a}_{3,3})\geq 2$ or the Kodaira symbol is $IV$. We assume we are in the first case because the second already gives the desired conclusion. Continuing the algorithm, we reach step $8$, where the curve has Kodaira symbol $IV^*$. If the algorithm do not stop at step $8$, it will continue running and, because of the shape of $E_4'$, we obtain that the curve is not minimal, which is a contradiction.
\end{proof}

This concludes the proof of the case $\ell= 3$ of Theorem \ref{thm:4}. We now prove Theorem \ref{thm:5}. First, we need the following lemma that shows that if there appears a point of order $9$ over a quadratic field ramified at $3$ the original curve has additive reduction at $3$.

\begin{proposition}\label{prop:5}
Let $E/\mathbb{Q}$ be an elliptic curve such that there exists a quadratic field $K = \mathbb{Q}(\sqrt{d})$, with discriminant $d_K$, for which there exists $P\in E(K)\setminus E(\mathbb{Q})$ of order $9$. Then, if $p\mid d_K$, the curve $E$ has additive reduction at $p$.
\end{proposition}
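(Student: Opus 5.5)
The point $P$ has order $9$, so $3P$ has order $3$ and lies in $E(K)$. By Theorem \ref{thm:1}, the only way a point of order $9$ can appear over a quadratic field is $E_\tors(\mathbb{Q})=\mathcal{C}_1$ and $E_\tors(K)=\mathcal{C}_9$. In particular $E(\mathbb{Q})[3]=\{0\}$, so $3P\in E(K)[3]\setminus E(\mathbb{Q})$ with $\ell=3$. The plan is to split according to whether $p\neq 3$ or $p=3$.

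\emph{Case $p\neq 3$.} Apply the first statement of Theorem \ref{thm:3} to $3P$, with $\ell=3$. It gives directly that $E$ has additive reduction at every ramified prime $p\neq 3$, so these primes are immediate.

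\emph{Case $p=3$.} This is the main obstacle, since Theorem \ref{thm:3}(2) only covers $\ell>3$. Assume $3\mid d_K$ and, for contradiction, that $E$ does not have additive reduction at $3$. We then treat the two remaining reduction types separately.

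\emph{Good reduction at $3$.} Use $E(K)[9]\cong\mathcal{C}_9$ and $E(K)[9]=E(\mathbb{Q})[9]\times E^d(\mathbb{Q})[9]$ (the decomposition from \cite[Corollary 4]{GonTorI} quoted in Section \ref{section:3}, valid for odd $n$). Since $E(\mathbb{Q})[9]=0$, the twist $E^d$ has a rational point of order $9$. Because $3\mid d$ and $E$ has good reduction at $3$, the twist $E^d$ has additive reduction at $3$ (a ramified quadratic twist of a curve with good reduction at an odd prime has type $I_0^*$). For additive reduction, $\tilde E_{ns}(\mathbb{F}_3)\cong\mathbb{F}_3$, and the component group of an $I_0^*$ fiber has order at most $4$. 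The kernel of reduction $E_1(\mathbb{Q}_3)$ is torsion-free: the formal group over $\mathbb{Z}_3$ has no torsion when $e=1<p-1=2$. Hence $E^d(\mathbb{Q}_3)_\tors$ embeds into a group of order dividing $3\cdot 4$, which has no element of order $9$. This is a contradiction.

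\emph{Multiplicative reduction at $3$.} Use Proposition \ref{prop:1}(2) with $\ell=p=3$. The image of $I_3$ in $\Aut(E[3])$ is conjugate to $H_1$ or $H_2$, and $\mathbb{Q}(E[3])\supseteq K$. In either case, let $V\subset E[3]$ be the line fixed by inertia, namely the $\mu_3$-line of the Tate curve, on which inertia acts by the cyclotomic character $\chi_3$ after an unramified twist. The other possibility is the quotient, on which inertia acts trivially. The rational $3$-isogeny kernel $\langle 3P\rangle$ is Galois stable, with Galois acting on it through the quadratic character $\varepsilon_K$ (since $3P\notin E(\mathbb{Q})$). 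This character is ramified at $3$, so its restriction to $I_3$ is the nontrivial character of order $2$, i.e.\ $\chi_3|_{I_3}$. So $\langle 3P\rangle$ must be the $\mu_3$-line, and by the Weil pairing the quotient $E[3]/\langle 3P\rangle$ has unramified character.

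I would then push the same analysis to $\langle P\rangle$, the Galois-stable cyclic subgroup of order $9$ on which Galois acts through $\varepsilon_K$ (values $\pm1$). On the Tate curve, the sub-line $\mu_9$ has inertia acting through $\chi_9$, whose image on $I_3$ has order $6$, not $2$. A cyclic subgroup of order $9$ of $E[9]$ on which inertia acts by $\pm 1$ would require the Tate parameter to have a cube root up to units, giving a $\mathbb{Z}/9$ with character compatible with $\chi_3$ mod $3$ but not mod $9$. This yields the contradiction; checking this carefully, possibly via the explicit Tate parametrization for $n=9$ from Section \ref{section:3} and $v_3(\Delta)$ computations, is where I expect the real work to be.

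Since both non-additive cases are impossible, $E$ has additive reduction at $3$ whenever $3\mid d_K$, which together with the case $p\neq 3$ proves the proposition.
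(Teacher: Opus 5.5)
Your case $p\neq 3$ and your good-reduction case at $3$ are correct. The good-reduction argument is self-contained: the ramified twist has type $I_0^*$, $E^d_1(\mathbb{Q}_3)$ is torsion-free, so $|E^d(\mathbb{Q}_3)_{\tors}|$ divides $4\cdot 3$. It replaces the appeal to Proposition 7 of \cite{deReyna2026} that the paper uses to exclude good reduction.

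The genuine gap is the multiplicative case, which you only sketch and explicitly leave unverified. Moreover, the sketch's key assertion is not right as stated. The subgroup $\langle P\rangle$ need not be $\mu_9$: the cyclic subgroups of order $9$ with $3$-torsion $\mu_3$ are $\langle \zeta_9 q^{k/3}\rangle$ for $k\in\{0,1,2\}$. Ruling out an inertia action through $\pm1$ on these amounts to showing that $\zeta_3 q^k$ is not a cube in $F=\mathbb{Q}_3^{\mathrm{nr}}(\zeta_3)$. That is true, but the obstruction is $\zeta_3$, not whether $q$ has a cube root: $\zeta_3$ lies in the $(-1)$-eigenspace of $F^\times/F^{\times 3}$ under $\zeta_3\mapsto\zeta_3^{-1}$, $q^k$ lies in the $(+1)$-eigenspace, and $\zeta_9\notin F$. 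There is also a slip: the $\mu_3$-line is not ``fixed by inertia''; inertia acts on it by $\chi_3$ and trivially on the quotient.

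You do not need any of this, because your good-reduction argument works verbatim. Suppose $E$ has reduction $I_n$ at $3$ and $3\mid d$. Take a model $y^2=x^3+a_2x^2+a_4x+a_6$ with the node at $(0,0)$, so $3\nmid a_2$ and $3\mid a_4,a_6$. The twist $y^2=x^3+da_2x^2+d^2a_4x+d^3a_6$ reaches Step 7 of Tate's algorithm with $P(T)\equiv T^2(T+a_{2,1})$, so $E^d$ has type $I_n^*$ at $3$. Its component group has order $c_3\in\{1,2,4\}$, $\tilde E^d_{ns}(\mathbb{F}_3)\cong\mathbb{F}_3$, and $E^d_1(\mathbb{Q}_3)$ is torsion-free. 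Hence $|E^d(\mathbb{Q}_3)_{\tors}|$ divides $12$, contradicting the rational point of order $9$ on $E^d$.

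With this addition your proof is complete and uniform in the reduction type. It is also much shorter than the paper's, which handles the multiplicative case by a case-by-case $3$-adic valuation analysis of the $n=9$ Tate normal form parametrization.
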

\begin{proof}
By Theorem \ref{thm:1}, if such a point $P$ of order $9$ exists, then $[3]P\in E(K)\setminus E(\mathbb{Q})$. So  Theorem \ref{thm:3} gives us the result for $p\neq 3$. Therefore, it suffices to prove the result for $p = 3$. We argue by contradiction and assume that $E$ does not have additive reduction. Because of \cite[Proposition 7]{deReyna2026} $E$ has multiplicative reduction at $3$. We can always find a minimal equation for $E$
$$
E_1 : y^2 + a_1xy + a_3y = x^3 + a_2x^2 + a_4x + a_6
$$
with integer coefficients such that $a_1,a_3\in\{0,1\}$ and $a_2\in\{-1,0,1\}$. After performing the procedure explained in Section \ref{section:3}, we obtain the twist of $E_1$ in Tate normal form
$$
\mathcal{T}_{b,c}:  y^2+ (1-c)xy - by = x^3 - bx^2
$$
with $b :=- \dfrac{\overline{a}_2^3}{\overline{a}_3^2}$ and $c:= 1- \dfrac{\overline{a}_1\overline{a}_2}{\overline{a}_3}$. Moreover, since the point $(0,0)$ is of $9$-torsion, there exists $t\in\mathbb{Q}$ such that
$$
c = t^2(t - 1), \qquad b =t^2(t-1)( t^2 - t + 1)
$$
and
$$
\Delta_{b,c} = t^9(t-1)^9(t^2-t+1)^3(t^3-6t^2 + 3t + 1).
$$
Substituting into equation \eqref{eq:2} of Section \ref{section:3} we obtain
\begin{equation}\label{eq:4}
t(t-1)^5(t^3-6t^2 + 3t + 1)\overline{a}_3^4 = 2^{12} d^6\Delta_1(t^2-t+1).
\end{equation}

Note that, following the procedure of Section \ref{section:3}, the change performed from $E_1$ to $E_4$ modifies the coefficient $c_4$ (see \cite[p. 42]{Silverman}) according to \cite[III Table 3.1]{Silverman}. It is easy to check that $\overline{c}_4 = 2^4d^2c_4$, where $\overline{c}_4$ is the coefficient corresponding to the equation $E_4$. In particular, this implies that if the coefficients of $E_4$ are $3$-integral, then the equation $E_4$ is minimal by \cite[VII Remark 1.1]{Silverman}. We continue the proof by splitting into cases according to the sign of the valuation of $t$.
\\~\\
\noindent \fbox{$v_3(t)< 0$}
Taking the $3$-adic valuation in equation \eqref{eq:4}, we obtain
$$
4v_3(y_1)= 6 + v_3(\Delta_1)- 7v_3(t)\geq 14.
$$
This implies that $v_3(y_1)\geq 4$. We also have $v_3(\overline{a}_1) = v_3(s)$. If $v_3(s)\leq 0$, it is easy to check that $v_3(\overline{a}_2) = 2v_3(s)$. Taking the $3$-adic valuation in the definition of $b$, we obtain on the one hand
\begin{equation}\label{eq:5}
5v_3(t) = 2(3v_3(s)- v_3(y_1))
\end{equation}
and, on the other hand, since $v_3(c)= v_3\left(1-\dfrac{\overline{a}_1\overline{a}_2}{\overline{a}_3}\right)<0$, we have
\begin{equation}\label{eq:6}
3v_3(t) = 3v_3(s) - v_3(y_1).
\end{equation}
Since $v_3(t)<0$, equations \eqref{eq:5} and \eqref{eq:6} imply that $v_3(s) = v_3(y_1) = 0$, which is a contradiction. Hence $v_3(s)>0$, which implies that
$$
v_3(\overline{a}_2) = v_3(-s^2 + 3x_1+ db_2) = 1.
$$
Indeed, $v_3(b_2) = 0$ and $v_3(x_1)>0$. The first follows from the fact that $ 0 =v_3(c_4) = v_3(b_2-24b_4)$ and the second from
$$
0<2v_3(y_1) = v_3(x_1^3 +db_2x_1^2 + 8d^2b_4x_1 + 16d^3b_6).
$$
Taking again the $3$-adic valuation of $b$ and $c$, we arrive at the equations
$$
\left\{
\begin{aligned}
5v_3(t) &=3 - 2v_3(y_1)\\
3v_3(t) &= v_3(s) + 1-v_3(y_1)
\end{aligned}
\right.
$$
Multiplying the second by $2$ and subtracting the first, we obtain
$$
0>v_3(t) = 2v_3(s)-1\geq 1.
$$
\noindent \fbox{$v_3(t)> 0$}
\\~\\
Taking the $3$-adic valuation in the definition of $b$, we obtain on the one hand
\begin{equation}\label{eq:7}
0<2v_3(t) = 3v_3(\overline{a}_2)- 2v_3(y_1),
\end{equation} 
which implies that $v_3(\overline{a}_2)>0$, and this in turn implies that $v_3(s)>0$. On the other hand, since $0<v_3(c)= v_3\left(1-\dfrac{\overline{a}_1\overline{a}_2}{\overline{a}_3}\right)$, we have
\begin{equation}\label{eq:8}
0 = v_3(\overline{a}_1) + v_3(\overline{a}_2) - v_3(y_1).
\end{equation}
Hence
$$
v_3(y_1) = v_3(\overline{a}_1) + v_3(\overline{a}_2)\geq 2.
$$
Moreover,
$$
v_3(\overline{a}_2) = v_3(-s^2 + 3x_1+ db_2) = 1
$$
because, as in the previous case, $v_3(b_2) = 0$ and $v_3(x_1)>0$. Substituting into equation \eqref{eq:7}, we obtain
$$
0<2v_3(t) = 3- 2v_3(y_1)\leq -1,
$$
a contradiction.
\\~\\
\noindent \fbox{$v_3(t)= 0$}
\\~\\
We divide this case into two subcases.
\\~\\
\noindent \fbox{\textbf{Case I} $t\equiv 1\mod 3$}
\\~\\
In this case, $v_3(t-1)>0$. Taking the $3$-adic valuation in the definition of $b$, we obtain 
\begin{equation}\label{eq:9}
0<v_3(t-1) = 3v_3(\overline{a}_2)- 2v_3(y_1).
\end{equation}
This implies that $v_3(\overline{a}_2) >0$  and $v_3(s)>0$. Moreover, $v_3(c) = v_3(t-1)>0$, hence
$$
v_3(y_1) = v_3(\overline{a}_1) + v_3(\overline{a}_2)\geq 2.
$$
This implies that $v_3(\overline{a}_2) = 1$. Indeed, $v_3(\overline{a}_2) = v_3(-s^2 + 3x_1+ db_2), v_3(b_2) = 0$ and $v_3(x_1)>0$. The two last statements follow from $ 0 =v_3(c_4) = v_3(b_2-24b_4)$ and 
$$
0<2v_3(y_1) = v_3(x_1^3 +db_2x_1^2 + 8d^2b_4x_1 + 16d^3b_6),
$$
respectively. Now, substituting $v_3(\overline{a}_2) = 1$ into equation \eqref{eq:9}, we obtain 
$$
0<v_3(t-1) = 3- 2v_3(y_1)\leq -1,
$$
a contradiction.
\\~\\
\noindent \fbox{\textbf{Case II} $t\equiv -1\mod 3$}
\\~\\
In this case, $t =  -1 + 3k$. Substituting into the polynomials we obtain
\begin{eqnarray*}
t^3-6t^2+3t+1 &=& 27k^3 - 81k^2 + 54k - 9\\
t^2-t+1 &=& 9k^2 - 9k + 3.
\end{eqnarray*}
Thus
$$
v_3(t^3-6t^2+3t+1) = 2, \qquad v_3(t^2-t+1) = 1.
$$
Taking the $3$-adic valuation of $b$, we obtain
$$
1 = 3v_3(\overline{a}_2) - 2v_3(y_1).
$$
Hence $v_3(\overline{a}_2),v_3(y_1)>0$, which implies that $v_3(s)>0$. The discriminant equation \eqref{eq:4}  in this case is
$$
4v_3(y_1) = v_3(\Delta_1) + 5.
$$
Thus $v_3(y_1)\geq 2$. However, it is easy to see that the equation
$$
1 = 3v_3(\overline{a}_2) - 2v_3(y_1)
$$
implies that $v_3(\overline{a}_2)\geq 2$ and $v_3(y_1)\geq 3$. Since $v_3(s)>0$, this contradicts the fact that the equation of $E_4$ is minimal.
\end{proof}

\addtocounter{theorem}{-1}
\begin{theorem}
     Let $E/\mathbb{Q}$ be a rational elliptic curve such that there exists a point $P\in E(K)[9]\setminus E(\mathbb{Q})$ of order $9$, where $K = \mathbb{Q}(\sqrt{d})$. Then, $3|d_K$ if and only if $E$ has additive reduction at $3$ and the Kodaira Symbol at $3$ is not $IV$.
\end{theorem}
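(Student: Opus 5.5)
We prove the two implications separately. For the forward direction, assume $3\mid d_K$. Proposition \ref{prop:5} already gives that $E$ has additive reduction at $3$. It remains to exclude Kodaira symbol $IV$ at $3$.

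\textbf{Excluding $IV$.} We redo the computation of Section \ref{section:3} for $n=9$, keeping track of $v_3(d)=1$. We start from a minimal model at $3$ and pass to $E_4$ and then to $\mathcal{T}_{b,c}$, with $t$ the parameter of the $9$-torsion family. Relation \eqref{eq:4} still holds, and we take $3$-adic valuations exactly as in the proof of Proposition \ref{prop:5}, now with $v_3(d^6)=6$ on the right-hand side. We split according to whether $v_3(t)<0$, $v_3(t)>0$, or $v_3(t)=0$, and in the last case according to $t\equiv 1$ or $t\equiv -1 \bmod 3$.

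Kodaira type $IV$ at $3$ forces $v_3(\Delta_1)$ to be small: by Ogg's formula (Theorem \ref{thm:6}), $m=3$ and $f\le 2+\delta$. In each case we aim either for a contradiction with minimality of $E_1$, via the relation $\overline{c}_4=2^4d^2c_4$, or for $v_3(\Delta_1)$ too large for type $IV$.

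An alternative, cleaner route is via inertia, mimicking Proposition \ref{prop:4}. Type $IV$ gives potentially good reduction with $\#\Phi_3=3$, by Kraus's classification quoted in Proposition \ref{prop:3}. However, $p=\ell=3$, so $\Phi_3$ need not inject into $\Aut(E[3])$, and the diagram argument does not apply directly. I would therefore use the explicit Tate-algorithm route for this part. This is the main obstacle: tracking the valuations of $\overline{a}_1,\overline{a}_2,\overline{a}_3$ after the ramified twist, and then running Tate's algorithm on a suitably rescaled $E_4$ to show that the algorithm cannot stop at step $5$ with symbol $IV$.

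\textbf{Converse.} Assume additive reduction at $3$ with Kodaira symbol not $IV$, and suppose for contradiction that $3\nmid d_K$. Then the twist is unramified at $3$. By Remark \ref{rmk:3}, $E_4$ (if $3$-integral) is minimal and has the same reduction type as $E$. We use the same case split on $v_3(t)$ and relation \eqref{eq:4}, now with $v_3(d)=0$. In the proof of Proposition \ref{prop:5} the multiplicative hypothesis entered only through $v_3(c_4)=0$, which gave $v_3(b_2)=0$. Here instead $v_3(c_4)>0$, and additivity makes $(0,0)$ on the $3$-torsion model reduce to the singular point. Note that Remark \ref{rmk:4} does not apply, since $p\mid n$; instead we use that $[3]P$ is a $3$-torsion point and follow the argument of Lemma \ref{lemma:1} on the $E_4$ model $y^2+\overline{a}_1xy+\overline{a}_3y=x^3$ attached to $[3]P$.

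The shape of that model, with $P(T)=T^3$ at step $6$, rules out $I_n^*$. The valuation $v_3(\overline{a}_3)\in\{1,2\}$ then leads to $IV$ or $IV^*$. The $9$-torsion relation \eqref{eq:4}, combined with the congruence analysis on $t$, should exclude $IV^*$ and the remaining types $II,III,II^*,III^*$, leaving only $IV$. This contradicts the hypothesis, so $3\mid d_K$.
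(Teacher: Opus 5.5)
Your forward direction ($3\mid d_K\Rightarrow$ additive and not $IV$) follows the paper's route in outline. You use Proposition~\ref{prop:5} for additivity, then Ogg's formula with $m=3$ to get $v_3(\Delta_1)\le 7$, then a case split on $t$ in which each case contradicts minimality or this bound. The outline is sound but unexecuted, and ``$f\le 2+\delta$'' bounds nothing on its own. You need the bound $f_3(E/\mathbb{Q})\le 5$, i.e. $\delta\le 3$ at $p=3$.

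The converse has a genuine gap. You claim that additivity forces $(0,0)$ on the $3$-torsion model to reduce to the singular point. This does not follow from additivity, and it is false at $p=3$. Take a minimal model of $E^d$ over $\mathbb{Q}_3$, and let $E^d_0$ be the points of nonsingular reduction and $E^d_1$ the kernel of reduction. Since $v_3(3)=1<2$, $E^d_1(\mathbb{Q}_3)$ is torsion-free, and $E^d_0/E^d_1\cong\mathbb{F}_3^+$. Hence the rational $9$-torsion point $Q$ is not in $E^d_0(\mathbb{Q}_3)$. The component group, of order at most $4$, therefore has order $3$, and $3Q\in E^d_0(\mathbb{Q}_3)\setminus E^d_1(\mathbb{Q}_3)$ reduces to a \emph{nonsingular} point. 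For instance, $t=2$ gives $\mathcal{T}_{12,4}\colon y^2-3xy-12y=x^3-12x^2$, of type $IV$ at $3$ with $v_3(\Delta)=5$. Here $3(0,0)=(c,b-c)=(4,8)\equiv(1,2)$, a smooth point of $y^2=x^3$ over $\mathbb{F}_3$.

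Consequently, on the model $y^2+\overline{a}_1xy+\overline{a}_3y=x^3$ attached to $[3]P$ one has $v_3(\overline{a}_3)=0$, and three things break:
\begin{itemize}
\item The $P(T)=T^3$ shape argument never applies, because Tate's algorithm must first move to the singular point.
\item The identity $v_p(\Delta_4)=3v_p(y_1)+v_p(4s^3-27y_1)$ from Lemma~\ref{lemma:1} no longer determines $v_p(\Delta_4)$, because of the factor $27$.
\item So ``$v_3(\overline{a}_3)\in\{1,2\}$ gives $IV$ or $IV^*$'' has no basis.
\end{itemize}
The component-group argument above does give $IV$ or $IV^*$. But excluding $IV^*$ is the real content, and you leave it as ``should''.

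The missing step is to read the type directly off the $9$-torsion Tate normal form $\mathcal{T}_{b,c}$. This curve is $\mathbb{Q}$-isomorphic to $E^d$, and when $3\nmid d$ it has the same type as $E$.
\begin{itemize}
\item If $v_3(t)>0$ or $t\equiv 1\pmod 3$, then $3\mid b,c$ and $(1-c)^2-4b$ is a $3$-adic unit, so the reduction is multiplicative.
\item If $v_3(t)<0$, additivity is likewise impossible. The paper argues via minimality of $E_4$; alternatively, the change of variables with $u=3^{-3k}$, $k=-v_3(t)$, gives an integral model with $b_2$ a unit.
\item If $t\equiv -1\pmod 3$, then $v_3(b)=1$ and $3\mid 1-c$. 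Tate's algorithm stops at Step~5 because $v_3(b_6)=v_3(b^2)=2$, giving type $IV$.
\end{itemize}
This is exactly the paper's argument for this direction.
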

\begin{proof}
    Fix a minimal model at $3$
    $$
    y^2 + a_1xy + a_3y = x^3+ a_2x^2+ a_4x+a_6.
    $$
    Applying the procedure of Section \ref{section:3}, we reach to the following curve in Tate normal form, which is a twist of the original curve over the field $K$,
    $$
    \mathcal{T}_{b,c}: y^2 + (1-c)xy -by = x^3-bx^2
    $$
    with $b :=- \dfrac{\overline{a}_2^3}{\overline{a}_3^2}$ and $c:= 1- \dfrac{\overline{a}_1\overline{a}_2}{\overline{a}_3}$.
     By hypothesis, the point $(0,0)$ on the curve in Tate normal form is a $9$-torsion point. Therefore, there exists $t\in\mathbb{Q}$ such that $c = t^3 - t^2, b = t^2(t-1)(t^2-t+1)$ and $\Delta_{b,c} = t^9(t-1)^9(t^2-t+1)^3(t^3-6t^2 + 3t + 1)$. Substituting into equation (\ref{eq:3}) of Section \ref{section:3} we obtain
\begin{equation}\label{eq:10}
t(t-1)^5(t^3-6t^2 + 3t + 1)\overline{a}_3^4 = 2^{12} d^6\Delta_1(t^2-t+1).
\end{equation}
Note that in this hypothesis $3|d_K$ implies that $E$ has additive reduction (see Proposition \ref{prop:5}). So we can assume that $E$ has additive reduction at $3$ and it is enough to prove that 
$$
3\nmid d_K \iff \text{the Kodaira symbol of $E$ at $3$ is $IV$}.
$$
\\~\\
\noindent\fbox{$\Longrightarrow$}
Assume that $3\nmid d_K$. We divide the proof of this implication according to the valuation of $t$.
\\~\\
\noindent \fbox{$v_3(t)<0$}
Taking the $3$-adic valuation of equation \eqref{eq:10}, we obtain
\begin{equation}\label{eq:11}
    4v_3(y_1) = v_3(\Delta_1) - 7v_3(t) \geq v_3(f(E/\mathbb{Q}_p))-7v_3(t)\geq 9,
\end{equation}
where the inequalities are deduced from Ogg's formula (see Theorem \ref{thm:6}) and the additive redction of $E$. Hence $v_3(y_1)\geq 3$. Taking valuations in the relations defining $b$ and $c$ yields the equations
\begin{equation}\label{eq:12}
    5v_3(t) = 3v_3(\overline{a}_2)-2v_3(y_1)
\end{equation}
\begin{equation}\label{eq:13}
    3v_3(t) = v_3(s)+v_3(\overline{a}_2)-v_3(y_1)
\end{equation}
Multiplying equation \eqref{eq:13} by $2$ and subtracting equation \eqref{eq:12}, we obtain
\begin{equation}\label{eq:14}
    v_3(t) = 2v_3(s) - v_3(\overline{a}_2).    
\end{equation}
If $v_3(s)<0$, then $v_3(\overline{a}_2) = 2v_3(s)$, which implies $v_3(t) = 0$, a contradiction. Therefore, $v_3(s)\geq 0$ and $v_3(\overline{a}_2) > 0$ (since $v_3(t)<0$). Since $v_3(\overline{a}_3) = v_3(y_1)>0$, the equation
$$
E_4: y^2+\overline{a}_1xy + \overline{a}_3y = x^3 + \overline{a}_2x^2
$$
has integral coefficients. Since $3$ is unramified in $K$, Remark \ref{rmk:3} implies that the equation of $E_4$ is minimal. Reducing modulo $3$, the coefficient $\overline{a}_3$ is $0$, and therefore the reduced curve has the singular point $(0,0)$. Tate's algorithm can be applied, and Step $2$ implies that
$v_3(\overline{a}_1^2+4\overline{a}_2)>0$.
Hence $v_3(s)>0$. Equation \eqref{eq:13} implies that $v_3(\overline{a}_3) = v_3(y_1) \geq 3$. Since $E_4$ is minimal, $v_3(\overline{a}_2), v_3(\overline{a}_1) = v_3(s)>0$, then  $v_3(\overline{a}_2) = 1$. Substituting in equation \eqref{eq:14}, we get 
$$
v_3(t) = 2v_3(s) - 1> 0,
$$
a contradiction.
\\~\\
\noindent \fbox{$v_3(t)>0$} Taking the $3$-adic valuation in equation \eqref{eq:10}, we obtain 
\begin{equation}\label{eq:15}
    v_3(t) + 4v_3(y_1) = v_3(\Delta_1) .
\end{equation}
Taking the $3$-adic valuation of $b$ and $c$, we obtain
$$
0<2v_3(t) = v_3(c) = v_3(b) = 3v_3(\overline{a}_2) - 2v_3(y_1).
$$
This implies $v_3(\overline{a}_2)>0$, which in turn implies $v_3(s)\geq 0$. Since $0<v_3(c) = v_3\left(1-\dfrac{\overline{a}_1\overline{a}_2}{\overline{a}_3}\right)$, it follows that
$$
0<v_3(\overline{a}_1) + v_3(\overline{a}_2) = v_3(\overline{a}_3) = v_3(y_1). 
$$
Since $v_3(s)\geq 0$, we have
$$
v_3(8d^2b_4 + 2db_2x_1 + 3x_1^2) \geq v_3(y_1)>0.
$$
Therefore the point $(x_1,y_1)$ reduces to a singular point. Note that $E_4$ has integral coefficients and is minimal by Remark \ref{rmk:3}.  Hence we can apply Tate's algorithm to the curve $\mathcal{T}_{b,c}$. Step $2$ yields the contradiction
$$
0 = v_3((1-c)^2-4b)>0.
$$
\\~\\
\noindent \fbox{$v_3(t)=0$}
Taking the $3$-adic valuation in equation \eqref{eq:10}, we obtain
\begin{equation}\label{eq:16}
    5v_3(t-1) +v_3(t^3-6t^2+3t+1) + 4v_3(y_1) = v_3(\Delta_1) + v_3(t^2-t+1) .
\end{equation}
If $v_3(t-1)>0$, then $v_3(b), v_3(c)>0$. Therefore one can apply exactly the same reasoning as in the case $v_3(t)>0$ and obtain a contradiction. Hence equation \eqref{eq:16} becomes
\begin{equation}\label{eq:17}
    v_3(t^3-6t^2+3t+1) + 4v_3(y_1) = v_3(\Delta_1) + v_3(t^2-t+1).
\end{equation}
Since $v_3(t)= v_3(t-1) = 0$, we have $t = -1 + 3k$ with $k\in\mathbb{Q}$ a $3$-adic integer. Substituting into $t^3-6t^2+3t+1$ and $t^2-t+1$, one easily deduces that
\begin{eqnarray*}
    v_3(t^3-6t^2+3t+1) &=& 2, \\
    v_3(t^2-t+1) &=& 1.
\end{eqnarray*}
Hence equation \eqref{eq:17} gives
\begin{equation*}
    1 + 4v_3(y_1) = v_3(\Delta_1)\geq 2,
\end{equation*}
where the inequality is deduced from Ogg's formula again (see Theorem \ref{thm:6}). From this we deduce that $v_3(b)=1$, $v_3(c) = 0$, and $v_3(y_1)>0$. Since
$$
0<v_3(b) = 3v_3(\overline{a}_2)-2v_3(y_1),
$$
it follows that $v_3(\overline{a}_2)>0$, which implies $v_3(s)\geq 0$, and hence
$$
v_3(8d^2b_4 + 2db_2x_1 + 3x_1^2) \geq v_3(y_1)>0.
$$
Therefore the point $(x_1,y_1)$ reduces to a singular point on $E_3$, which is a minimal equation by Remark \ref{rmk:3}. Hence we can apply Tate's algorithm to the curve with equation $\mathcal{T}_{b,c}$. Running the algorithm reaches Step $5$, where since the coefficient $b_6$ of $\mathcal{T}_{b,c}$ is $b^2$ and $v_3(b) = 2$, it follows that $\mathcal{T}_{b,c}$ has Kodaira symbol $IV$ at $3$. Since the original curve $E$ is a quadratic twist of this curve unramified at $3$, its Kodaira symbol is the same.
\\~\\
\noindent \fbox{$\Longleftarrow$}
Assume that the Kodaira symbol of $E$ at $3$ is $IV$. By contradiction, suppose that $3$ ramifies in $K$, so that $3\mid d$. We start bounding the valuation of $v_3(\Delta_1)$. Theorem \ref{thm:6} gives 
$$
v_3(\Delta_1) = f_3(E/\mathbb{Q})+ m_3(E/\mathbb{Q}) -1,
$$
Since the Kodaira symbol of $E$ is $IV$, we have $m_3(E/\mathbb{Q}) = 3$ (see \cite[IV.Table 4.1]{Silverman1994AdvancedTI}). On the other hand, $f_3(E/\mathbb{Q})\leq 5$ (see \cite[IV.Theorem 10.4]{Silverman1994AdvancedTI}). Therefore, $v_3(\Delta_1) \leq 7$. Now, we divide the proof according to the sign of the valuation of $t$.
\\~\\
\noindent \fbox{$v_3(t)<0$} Taking the $3$-adic valuation in equation \eqref{eq:4}, we obtain 
\begin{equation}\label{eq:18}
    4v_3(y_1) = 6 +  v_3(\Delta_1) - 7v_3(t)\geq 15 .
\end{equation}
Hence $v_3(y_1)\geq 4$. Moreover, since $v_3(t)<0$, the relations in equations \eqref{eq:12} and \eqref{eq:13} hold.  As in the case $v_3(t)<0$ in the previous implication, we obtain $v_3(s)\geq 0$. In fact, $v_3(s)>0$. Indeed, if $v_3(s)=0$, then
$$
v_3(\overline{a}_2) = v_3(-s^2+3x_1+db_2) = 0.
$$
Thus equations \eqref{eq:12} and \eqref{eq:13} become 
\begin{eqnarray*}
    5v_3(t) &=& -2v_3(y_1)\\
    3v_3(t) &=& -v_3(y_1)
\end{eqnarray*}
Hence $v_3(t)= 0$ or $v_3(y_1)=0$, a contradiction.
\\~\\
Multiplying equation \eqref{eq:7} by $2$ and subtracting equation \eqref{eq:6}, we obtain 
$$
0 < -v_3(t) = v_3(\overline{a}_2)-2v_3(s),$$
and therefore $v_3(\overline{a}_2)\geq 3$. Thus, we obtain $v_3(\overline{a}_i)\geq i$. Consequently, the equation of $E_4$ is not minimal. The valuation of the discriminant of the equation of $E_4$ is $v_3(\Delta_1)+6$. Since $v_3(\Delta_1)\leq 7$ and we can perform a change of variables in the equation of $E_4$ that lowers the valuation of the discriminant to $v_3(\Delta_1) -6$ while keeping the coefficients integral, we obtain $v_3(\Delta_1)\in\{6,7\}$. Hence, the equation
$$
y^2+\frac{\overline{a}_1}{3}xy + \frac{\overline{a}_3}{3^3} = x^3+\frac{\overline{a}_2}{3^2}x^2
$$
has $3$-integral coefficients, and its discriminant has valuation $0$ or $1$. We compute
\begin{eqnarray*}
    \overline{b}_2 &=& \frac{\overline{a}_1^2}{3^2} + 4\frac{\overline{a}_2}{3^2}\\
    \overline{b}_4 &=& \frac{\overline{a}_1\overline{a}_3}{3^4}\\
    \overline{b}_6 &=& \frac{\overline{a}_3^2}{3^6}\\
    \overline{b}_8 &=& \frac{\overline{a}_2\overline{a}_3^2}{3^8}
\end{eqnarray*}
and we have $v_3(\overline{b}_4)\geq 1, v_3(\overline{b}_6)\geq 2 $ and $v_3(\overline{b}_8)\geq 3$, because we have already seen that $v_3(\overline{a}_1)\geq 1$, $v_3(\overline{a}_2)\geq 3$ and $v_3(\overline{a}_3)\geq 4$. Hence the valuation of its discriminant is
$$
v_3(-\overline{b}_2^2\overline{b}_8-8\overline{b}_4^3-27\overline{b}_6^2 + 9\overline{b}_2\overline{b}_4\overline{b}_6)\geq 2.
$$
This is a contradiction.
\\~\\
\noindent \fbox{$v_3(t)>0$} Arguing as at the beginning of this case in the previous implication, we obtain
\begin{equation}\label{eq:19}
    0<2v_3(t)=v_3(b) = 3v_3(\overline{a}_2) - 2v_3(y_1),
\end{equation}
\begin{equation}\label{eq:20}
    0<v_3(\overline{a}_1) + v_3(\overline{a}_2) = v_3(y_1),
\end{equation}
together with $v_3(\overline{a}_2),v_3(y_1)>0$ and $v_3(s)\geq 0$. In fact $v_3(s)>0$ since $0<v_3(\overline{a}_2) = v_3(-s^2+3x_1+db_2)$, noting that $v_3(d) = 1$. Since $v_3(\overline{a}_2)$ and $v_3(s)>0$, equation \eqref{eq:20} implies that $v_3(y_1)\geq 2$. In turn, equation \eqref{eq:19} allows us to deduce that $v_3(\overline{a}_2)\geq 2$. Looking again at equation \eqref{eq:20}, we obtain $v_3(y_1)\geq 3$. Iterating this argument, we obtain $v_3(\overline{a}_2)\geq 3$ and $v_3(y_1)\geq 4$.
\\~\\
In particular, the equation of $E_4$ is not minimal. From this point one can repeat exactly the same reasoning as in the previous case and obtain a contradiction.
\\~\\
\noindent \fbox{$v_3(t)=0$} Since we are considering $3$-adic valuations, we have either $v_3(t-1) >0$ or $v_3(t+1)>0$. Taking the $3$-adic valuation of equation \eqref{eq:4}, we obtain
\begin{equation}\label{eq:21}
    5v_3(t-1)+v_3(t^3-6t^2+3t+1) + 4v_3(y_1) = v_3(\Delta_1) + v_3(t^2-t+1)+6.
\end{equation}
We begin by seeking a contradiction in the case $v_3(t+1)>0$. In this case we already saw in the previous implication that
\begin{eqnarray*}
    v_3(t^3-6t^2+3t+1) &=& 2\\
     v_3(t^2-t+1) &=& 1
\end{eqnarray*}
Thus, equation \eqref{eq:21} becomes 
$$
4v_3(y_1) = 5 + v_3(\Delta_1)\leq 5+ 7 =12,
$$
where the inequality is deduced from Ogg's formula (see Themorem \ref{thm:6}). Hence, $v_3(y_1)\in\{2,3\}$. Taking the valuation of $b$, we obtain 
$$
1 = 3v_3(\overline{a}_2)-2v_3(y_1).
$$
Neither of the two possible values of $v_3(y_1)$ satisfies this equation. Hence we obtain a contradiction.
\\~\\
We now seek a contradiction assuming $v_3(t-1)>0$.
Equation \eqref{eq:21} simplifies to
$$
5v_3(t-1)+ 4v_3(y_1) = 6 + v_3(\Delta_1)\leq 13
$$
We claim that $v_3(t-1)= 1$. Because of the previous inequality, it is enough to show that $v_3(y_1)>0$. Taking the valuation of $b$,
$$
0<v_3(t-1) = 3v_3(\overline{a}_2)-2v_3(y_1).
$$
we get $v_3(\overline{a}_2)>0$. Since $0<v_3(c) = v_3\left(1-\dfrac{\overline{a}_1\overline{a}_2}{\overline{a}_3}\right)$, we have $v_3(y_1) = v_3(\overline{a}_1) + v_3(\overline{a}_2)> 0$ as desired. Then equation \eqref{eq:21} simplifies to 
$$
0<4v_3(y_1) =1 + v_3(\Delta_1)\leq 8
$$
Hence $v_3(y_1)\in\{1,2\}$. Taking the valuation of $b$ again, we obtain
$$
1 = 3v_3(\overline{a}_2)-2v_3(y_1).
$$
It is easy to check that given the restriction on $v_3(y_1)$, the only possible values are $v_3(\overline{a}_2)= v_3(y_1) = 1$. Thus
$$
2 = 2v_3(y_1) = v_3(x_1^3+db_2x_1^2 + 8d^2b_4x_1 + 16d^3b_6).
$$
Since $v_3(d)>0$, we have $v_3(x_1)>0$. However, this shows that each summand inside the valuation in the previous equation has a valuation of at least $3$, which gives a contradiction.
\end{proof}

This concludes the proofs for the main theorems when $\ell = 3$. 
\begin{remark}\label{rmk:5}
    \begin{itemize}
        \item In the hypothesis of Theorem \ref{thm:5}, we can determine all the primes that ramify over $K$. Indeed, if $p\neq 3$ we can apply Theorem \ref{thm:4} with $\ell=3$. Note that if a point $P$ of order $9$ appears over $K$, the point $[3]P$ of order $3$ cannot be defined over $\mathbb{Q}$. This is a consequence of Theorem \ref{thm:1} 
        \item If we know that over $K$ there appears a point of order $3$ and we are in the cases where $E(K)[3] = \mathcal{C}_3\times \mathcal{C}_3$ or $E(K) = \mathcal{C}_9$, we can determine all the primes that ramify over $K$. If $E(K)[3]$ has a different form, we can use Theorem \ref{thm:4} to decide if the primes $p\neq 3$ ramify over $K$. 
        \\~\\
        We shall expect to not be able to determine wether $p=3$ ramifies over $K$ or not, because in the case of adding a point of order $3$ the field $K$ is not unique (see \cite[Theorem 2]{GonTorII}). The same result shows that at most there are two possibilities for $K$. If there are two different quadratic fields $K_1 = \mathbb{Q}(\sqrt{d_1})$ and $K_2 = \mathbb{Q}(\sqrt{d_2})$ such that $E(K_1)[3] \neq E(\mathbb{Q})[3] \neq E(K_2)[3]$, then $\mathbb{Q}(E[3]) = \mathbb{Q}(\sqrt{d_1}, \sqrt{d_2})$. The Galois diagram of subextensions is 
        \[\begin{tikzcd}
	& {\mathbb{Q}(\sqrt{d_1}, \sqrt{d_2})} \\
	{\mathbb{Q}(\sqrt{d_1})} & {\mathbb{Q}(\sqrt{d_1\cdot d_2})} & {\mathbb{Q}(\sqrt{d_2})} \\
	& {\mathbb{Q}}
	\arrow["2", no head, from=2-1, to=1-2]
	\arrow["2"', no head, from=2-1, to=3-2]
	\arrow["2"', no head, from=2-2, to=1-2]
	\arrow["2", no head, from=2-2, to=3-2]
	\arrow["2"', no head, from=2-3, to=1-2]
	\arrow["2", no head, from=2-3, to=3-2]
\end{tikzcd}\]
Using the results of this section, we can determine if the primes greater or equal than $5$ divide $d_1$ and $d_2$. If there is $p\geq 5$ with that property, then $\mathbb{Q}(\sqrt{d_i})\neq \mathbb{Q}(\sqrt{-3})$. However, $\mathbb{Q}(\sqrt{-3})\subset\mathbb{Q}(E[3])$. Then, $\mathbb{Q}(\sqrt{d_1\cdot d_2}) = \mathbb{Q}(\sqrt{-3})$. So $3$ divides one of the $d_i$ and not the other one.
    \end{itemize}
\end{remark}

\section{Common results for the primes $\ell = 5,7$}
The strategy to prove theorem \ref{thm:4} for $\ell = 5,7$ consists of proceeding by contradiction and using the results presented in section \ref{section:2} about the image of inertia discard most of the cases. The remaining cases will be proven using similar techniques to the ones used in the previous section.
\\~\\
Let $E/\mathbb{Q}$ be an elliptic curve such that there exist a quadratic extension $K/\mathbb{Q}$ and a point $P\in E(K)[\ell]\setminus E(\mathbb{Q})$ with $\ell\in \{5,7\}$. Let $p$ be a prime such that $E$ has additive reduction at $p$. Suppose that $p$ is unramified in $K$. Let $\mathfrak{P}|p$ be a prime of $\mathbb{Q}(E[\ell])$ and set $L = \mathbb{Q}(E[\ell])^{I(\mathfrak{P}|p)}$. We then have the tower
$$
\begin{tikzcd}
{\mathbb{Q}(E[\ell])}                                          \\
L \arrow[u, "I(\mathfrak{P}|p)", no head] \arrow[d, no head] \\
K \arrow[d, "2", no head]                                    \\
\mathbb{Q}                                                  
\end{tikzcd}
$$

Sutherland \cite{Sutherland} and Zywina \cite{Zywina} have characterized all possible subgroups that can occur as $\overline{\rho}_{E, \ell}(G_{\mathbb{Q}})$ in the cases $\ell=5$ and $\ell=7$. We analyze each of these cases, ruling out in each one the possibility of having an $\ell$-torsion point defined over a quadratic extension $K/\mathbb{Q}$.
\\~\\
In this case, \cite{Zywina} shows that $\overline{\rho}_{E,\ell}(G_{\mathbb{Q}})$ is conjugate in $\GL_2(\mathbb{F}_\ell)$ to a group from a list of 15 possible groups (see \cite[Theorem 1.4]{Zywina}). Moreover, Sutherland studies these groups and determines in each case the index of the largest subgroup fixing a non-zero vector in $\mathbb{F}_\ell^2$; this number coincides with the degree of the minimal extension $K/\mathbb{Q}$ over which $E$ has a rational $\ell$-torsion point. We use this to prove the following result.
\begin{proposition}\label{prop:6}
     Let $E/\mathbb{Q}$ be a rational elliptic curve such that there exists a point $P\in E(K)[5]\setminus E(\mathbb{Q})$, where $K = \mathbb{Q}(\sqrt{d})$. Let $p\neq 5$ be a prime number such that $E$ has additive reduction at $p$. If $p$ does not ramify in $K$, then $K=\mathbb{Q}(\sqrt{5})$. 
\end{proposition}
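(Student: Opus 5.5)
The plan is to read off the shape of $\overline{\rho}_{E,5}$ directly from the quadratic growth, and then show that the image of inertia at $p$ cannot have that shape. If this works it gives a contradiction for every such $p$, which is stronger than the stated conclusion $K=\mathbb{Q}(\sqrt{5})$. I would only fall back on Zywina's list \cite[Theorem 1.4]{Zywina} and Sutherland's index computation if some case below resists the direct argument.

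\textbf{Step 1: the shape of $\overline{\rho}_{E,5}$.} By Theorem \ref{thm:1}, $E(\mathbb{Q})[5]$ must be trivial: growth $\mathcal{C}_5\to\mathcal{C}_5\times\mathcal{C}_5$ does not occur, and would in any case need $\mathbb{Q}(\zeta_5)\subset K$. Also $E(K)[5]\cong E^d(\mathbb{Q})[5]$ is cyclic of order $5$, generated by $P$.
\begin{itemize}
\item For $\sigma\in G_{\mathbb{Q}}$, $\sigma(P)\in E(K)[5]=\langle P\rangle$, and the action factors through $\Gal(K/\mathbb{Q})$. Hence $\sigma(P)=\chi_K(\sigma)P$, where $\chi_K$ is the quadratic character of $K$.
\item Taking a basis $\{P,R\}$ of $E[5]$, we get $\overline{\rho}_{E,5}=\begin{pmatrix}\chi_K & *\\ 0 & \psi\end{pmatrix}$.
\item By the Weil pairing, $\chi_K\psi=\chi_5$, the mod $5$ cyclotomic character.
\end{itemize}

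\textbf{Step 2: inertia at $p$ is unipotent.} Now restrict to $I_p$. Since $p$ is unramified in $K$, $\chi_K|_{I_p}=1$. Since $p\neq 5$, $\chi_5|_{I_p}=1$. Hence $\psi|_{I_p}=1$, so $\overline{\rho}_{E,5}(I_p)\subseteq\left\{\begin{pmatrix}1&b\\0&1\end{pmatrix}\right\}$ has order $1$ or $5$. This is the group-theoretic form of the tower displayed above: the inertia fixes $K$.

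\textbf{Step 3: case analysis.}
\begin{itemize}
\item \emph{Potentially good reduction.} Since $\ell=5\geq 3$ and $p\neq\ell$, the diagram in Section \ref{section:2} gives $\overline{\rho}_{E,5}(I_p)\cong\Phi_p$. This group is nontrivial because the reduction is additive.
  \begin{itemize}
  \item For $p\geq 7$, Proposition \ref{prop:3} gives $\#\Phi_p\in\{2,3,4,6\}$.
  \item For $p=2$, $\Phi_2\subseteq \SL_2(\mathbb{F}_3)$, so its order divides $24$.
  \end{itemize}
  Neither is compatible with order $5$, a contradiction.
\item \emph{Potentially multiplicative reduction.} Over the ramified quadratic extension $L$ of Section \ref{section:2}, $E$ becomes the Tate curve. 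So $I_p$ acts on $E[5]$ as $\varepsilon\otimes\begin{pmatrix}1&*\\0&1\end{pmatrix}$, where $\varepsilon$ is the quadratic character of $L$, and $\varepsilon$ is nontrivial on $I_p$. Choose $\tau\in I_p$ with $\varepsilon(\tau)=-1$. Then $\overline{\rho}_{E,5}(\tau)$ has both eigenvalues equal to $-1\neq 1$, which contradicts unipotence.
\end{itemize}

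\textbf{Main obstacle: $p=3$ with potentially good reduction.} Proposition \ref{prop:3} as quoted allows $\Phi_3$ cyclic of order $5$, which is exactly the order the unipotent group permits. To exclude it, I would use the structure of $\Phi_3$: its wild part is a $3$-group, and its tame quotient is cyclic of order dividing $12$. The latter holds because a tame automorphism of the good-reduction curve over the residue field has order in $\{1,2,3,4,6\}$, and $24\cdot 1$ bounds $\#\Aut$ in characteristic $3$ to $12$ on the tame part. Hence $5\nmid\#\Phi_3$. If this bookkeeping is shaky, I would instead run Tate's algorithm on the twisted model $E_4$ at $3$, as in Lemma \ref{lemma:1}, using that $(0,0)$ reduces to a singular point by Remark \ref{rmk:4}.

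If the direct argument breaks down in some configuration, the fallback is to read off from Zywina's list which images $\overline{\rho}_{E,5}(G_{\mathbb{Q}})$ admit an index-$2$ subgroup fixing a vector. One then checks that in those cases the relevant quadratic subfield of $\mathbb{Q}(E[5])$ cut out by inertia considerations is $\mathbb{Q}(\sqrt{5})\subset\mathbb{Q}(\zeta_5)$, which is the statement's conclusion.
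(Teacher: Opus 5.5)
Your proof is correct, and it takes a genuinely different and more direct route than the paper.

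\textbf{The paper's route.} It invokes Zywina's classification of mod-$5$ images and Sutherland's index table to isolate the four images (5Cs.1.3, 5Cs.4.1, 5B.1.4, 5B.4.1) in which a $5$-torsion point becomes rational over a quadratic field. In each case it compares $\#I(\mathfrak{P}|p)$ with $[L:\mathbb{Q}(\zeta_5)]$, obtaining either a contradiction or $K\subseteq\mathbb{Q}(\zeta_5)$. The surviving case $K=\mathbb{Q}(\sqrt{5})$ is eliminated only afterwards, by explicit Tate-normal-form computations (Lemmas \ref{lemma:3}, \ref{lemma:4} and Proposition \ref{prop:9}).

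\textbf{Your route.} You read the Borel shape, with diagonal characters $\chi_K$ and $\chi_K\chi_5$, directly off the growth. This forces $\overline{\rho}_{E,5}(I_p)$ to be unipotent for every $p\neq 5$ unramified in $K$. Serre--Tate ($\Phi_p$ has no element of order $5$) and the ramified Tate-curve twist (an inertia element with both eigenvalues $-1$) then give a contradiction outright.

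\textbf{What each buys.} Your argument dispenses with the classification. It proves the stronger fact that the hypotheses are incompatible, so it already contains Proposition \ref{prop:9} and both lemmas. It carries over verbatim to $\ell=7$, replacing Propositions \ref{prop:7} and \ref{prop:11} and Lemmas \ref{lemma:5}, \ref{lemma:6}. For $\ell=3$ the same reasoning says that an unramified additive prime $p\neq 3$ must have $\#\Phi_p=3$, which is the content of Lemma \ref{lemma:1} and its $p=2$ companion. The paper's route buys only a description of where $E$ sits among the possible global images, which this statement does not need.

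\textbf{The $p=3$ case.} The case you single out as the main obstacle is not a real one. The ``$5$'' in the $p=3$ part of Proposition \ref{prop:3} is a misquotation of Serre. His list for $p=3$ is: cyclic of order $2,3,4,6$, or the order-$12$ group $\mathcal{C}_3\rtimes\mathcal{C}_4$. The paper's own treatment of 5B.1.4 and 5B.4.1 tacitly uses $5\nmid\#\Phi_p$ for all $p\neq 5$. Your supporting sentence about ``$24\cdot 1$'' does not parse as written. The clean reason is that $\Phi_p$ acts faithfully, by automorphisms, on the reduction of $E$ over $L$, which is an elliptic curve over $\overline{\mathbb{F}}_p$. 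Automorphisms of an elliptic curve have order in $\{1,2,3,4,6\}$. With this, neither the Tate's-algorithm fallback nor Zywina's list is needed.

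\textbf{A minor simplification.} In Step 1 you do not need Theorem \ref{thm:1}. $E(K)[5]$ is cyclic simply because $\mathbb{Q}(\zeta_5)\not\subseteq K$. The character through which $G_{\mathbb{Q}}$ acts on $P$ is nontrivial because $P\notin E(\mathbb{Q})$.
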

\begin{proof}
First, note that  $\mathbb{Q}(\zeta_5)\subset L$, because $p\neq 5$ and the only prime that ramifies in $\mathbb{Q}(\zeta_5)$ is $5$. Then, $[L:K]\geq 2$.
\\~\\
 In Table 3 (page 64 of \cite{Sutherland}), we can find the list of these 15 groups together with their indices. Only four of them have this degree equal to $2$, namely those labeled 5Cs.1.3, 5Cs.4.1, 5B.1.4 and 5B.4.1. We examine each of these cases:

\begin{itemize}

\item 5Cs.1.3 is a cyclic group of order $4$. Since the reduction is additive at $p$, the image of inertia cannot be trivial. On the other hand, $\mathbb{Q}(\zeta_5)\subset \mathbb{Q}(E[5])$. As $[\mathbb{Q}(E[5]):\mathbb{Q}] = 4$, it follows that $\mathbb{Q}(\zeta_5) = \mathbb{Q}(E[5])$. Since $p\neq 5$, the extension $\mathbb{Q}(\zeta_5)/\mathbb{Q}$ is unramified at $p$, whereas $\mathbb{Q}(E[5])/\mathbb{Q}$ is ramified because the inertia at $p$ is non-trivial, which is a contradiction.

\item 5Cs.4.1. It is a group of order $8$, generated by the matrices
$$
\begin{pmatrix} 4 & 0\\ 0 & 4\end{pmatrix}, 
\begin{pmatrix} 1 & 0\\ 0 & 2\end{pmatrix}.
$$

Since the group has order $8$, we have the diagram
$$
\begin{tikzcd}
{\mathbb{Q}(E[5])}                                          \\
L \arrow[u, "I(\mathfrak{P}|p)", no head] \arrow[d,"2", no head] \\
K \arrow[d, "2", no head]                                    \\
\mathbb{Q}                                                  
\end{tikzcd}
$$

Since $\mathbb{Q}(\zeta_5)\subset L$ and since the inertia is non-trivial, we must have $\mathbb{Q}(\zeta_5)= L$. Hence $K = \mathbb{Q}(\sqrt{5})$.

\item 5B.1.4. It is a subgroup of order $20$, generated by the matrices
\begin{equation*}
\left\{ 
\begin{pmatrix} 4 & 0\\ 0 & 3\end{pmatrix}, 
\begin{pmatrix} 1 & 1\\ 0 & 1\end{pmatrix} 
\right\}.
\end{equation*}

We have the diagram
$$
\begin{tikzcd}
{\mathbb{Q}(E[5])}                                          \\
L \arrow[u, "I(\mathfrak{P}|p)", no head] \arrow[d, no head] \\
K \arrow[d, "2", no head]                                    \\
\mathbb{Q}                                                  
\end{tikzcd}
$$

We have
$$
4\cdot 5 = [\mathbb{Q}(E[5]):\mathbb{Q}]
= [\mathbb{Q}(E[5]):L]\,[L:\mathbb{Q}(\zeta_5)]\cdot 4.
$$

Hence
$$
5 = [\mathbb{Q}(E[5]):L]\,[L:\mathbb{Q}(\zeta_5)]
= \#I(\mathfrak{P}|p)\,[L:\mathbb{Q}(\zeta_5)].
$$

This implies that the inertia must have order $5$, since we know it is non-trivial. On the other hand, since $E$ has additive reduction, $\Phi_p \neq \{1\}$ and therefore $\#I(\mathfrak{P}|p)$, which is divisible by $\#\Phi_p$, is a multiple of $2,3,4$ or $6$ (see Proposition \ref{prop:3}). Thus, inertia cannot have order $5$, which is a contradiction.

\item 5B.4.1. It is a subgroup of order $40$, generated by
\begin{equation*}
\left\{ 
\begin{pmatrix} 4 & 0\\ 0 & 4\end{pmatrix}, 
\begin{pmatrix} 1 & 0\\ 0 & 2\end{pmatrix}, 
\begin{pmatrix} 1 & 1\\ 0 & 1\end{pmatrix} 
\right\}.
\end{equation*}
 Reasoning as in the previous case we obtain
$$
2\cdot 5
=
[\mathbb{Q}(E[5]):L]\,[L:\mathbb{Q}(\zeta_5)]
=
\#I(\mathfrak{P}|p)\,[L:\mathbb{Q}(\zeta_5)].
$$
Again, the inertia cannot be trivial and because of Proposition \ref{prop:3}, $\#I(\mathfrak{P}|p)$ has to be a multiple of $2,3,4$ o $6$. That implies  $\#I(\mathfrak{P}|p) = 2$ or $\#I(\mathfrak{P}|p) = 10$. In both cases, it is easy to see that  $K\subset \mathbb{Q}(\zeta_5)$ and again
$K = \mathbb{Q}(\sqrt{5})$.
\end{itemize}
\end{proof}

We can do a similar analysis for the prime $\ell = 7$, obtaining the following result.
\begin{proposition}\label{prop:7}
     Let $E/\mathbb{Q}$ be a rational elliptic curve such that there exists a point $P\in E(K)[7]\setminus E(\mathbb{Q})$, where $K = \mathbb{Q}(\sqrt{d})$. Let $p\neq 7$ be a prime number such that $E$ has additive reduction $p$. If $p$ does not ramify in $K$, then $K=\mathbb{Q}(\sqrt{-7})$. 
\end{proposition}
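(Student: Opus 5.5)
We will follow the proof of Proposition \ref{prop:6}, now using Zywina's and Sutherland's classification for $\ell=7$. Since $p\neq 7$, the field $\mathbb{Q}(\zeta_7)$ is unramified at $p$, so $\mathbb{Q}(\zeta_7)\subset L=\mathbb{Q}(E[7])^{I(\mathfrak{P}|p)}$. Since $p$ is unramified in $K$, we also have $K\subset L$. The unique quadratic subfield of $\mathbb{Q}(\zeta_7)$ is $\mathbb{Q}(\sqrt{-7})$. The goal is therefore to show that in every admissible image, $K$ is forced to lie inside $\mathbb{Q}(\zeta_7)$.

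\textbf{Step 1: restrict the possible images.} In Sutherland's Table 3 we keep only the images $G=\overline{\rho}_{E,7}(G_{\mathbb{Q}})$ for which the minimal degree of a field of definition of a $7$-torsion point equals $2$. These should all be Borel-type or split Cartan-type groups: the $7\mathrm{Ns}$ and $7\mathrm{B}$ subgroups whose fixed-vector index is $2$, for instance 7Ns.2.1, 7B.1.x, 7B.6.x and similar labels.

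\textbf{Step 2: constraints on inertia.} For each surviving $G$ we use two facts.
\begin{enumerate}
\item $[\mathbb{Q}(E[7]):\mathbb{Q}]=\#G$, and $\det\circ\overline{\rho}$ is surjective onto $\mathbb{F}_7^\times$. Hence $[\mathbb{Q}(\zeta_7):\mathbb{Q}]=6$ divides $[L:\mathbb{Q}]$, which gives
$$\#G=6\cdot \#I(\mathfrak{P}|p)\cdot[L:\mathbb{Q}(\zeta_7)].$$
\item The inertia group $I(\mathfrak{P}|p)$ is nontrivial, because the reduction at $p$ is additive.
\begin{itemize}
\item In the potentially good case, $I(\mathfrak{P}|p)\cong\Phi_p$, since $\ell=7\ge 3$ and $p\ne\ell$. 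Its order is then $2,3,4,6$ by Proposition \ref{prop:3}, or $8,24$ if $p=2$, or $5$ or $12$ if $p=3$. Divisibility by $7$ is excluded.
\item In the potentially multiplicative case, inertia is an extension of a group of order $2$ by a group of order $1$ or $7$, by Proposition \ref{prop:1} applied over the quadratic ramified extension. Its order is therefore $2$ or $14$.
\end{itemize}
\end{enumerate}

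\textbf{Step 3: case analysis.} In each case we combine these constraints with the requirement that $\#G/6$ be divisible by $\#I$ and by $[L:\mathbb{Q}(\zeta_7)]$.
\begin{itemize}
\item If $\#G=6$ (cyclic, like 5Cs.1.3), then $L=\mathbb{Q}(E[7])=\mathbb{Q}(\zeta_7)$ would be unramified at $p$. This contradicts the nontriviality of inertia.
\item If $\#G=12$, then $\#I=2$ and $L=\mathbb{Q}(\zeta_7)$, so $K\subset\mathbb{Q}(\zeta_7)$ and $K=\mathbb{Q}(\sqrt{-7})$.
\item If $\#G=42$ (Borel-type), the quotient $7$ cannot be absorbed by an inertia group of order prime to $7$ and different from $1$. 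It also cannot be absorbed by inertia of order $14$, since that would force $\#I\cdot[L:\mathbb{Q}(\zeta_7)]=7$. This gives a contradiction.
\item If $\#G=84$, then $\#I\in\{2,14\}$, or possibly $\#I=6$ if $G$ contains a subgroup of order $6$ in the kernel of the relevant character. In each case we must identify $L$ and verify $K\subset\mathbb{Q}(\zeta_7)$. We do this by computing the fixed-vector subgroup $H$ of index $2$, which gives $K=\mathbb{Q}(E[7])^H$, and checking that $H$ contains the kernel of $\det$ restricted to $G$, up to the relevant inertia. This is the same as checking that $K$ is the fixed field of $\det^{-1}(\text{squares})$.
\end{itemize}

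\textbf{Main obstacle.} The hardest step is the larger Borel groups, where inertia could have order $6$ or $14$. There the purely numerical argument is not enough. We will use the explicit generators to show that the index-$2$ stabilizer subgroup $H$ equals $\{g\in G:\det g\in(\mathbb{F}_7^\times)^2\}$. This immediately gives $K=\mathbb{Q}(\sqrt{-7})$, without needing to know inertia. The first thing to try is to verify that every matrix in $G$ fixing the new vector has square determinant.
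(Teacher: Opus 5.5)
You have the right framework: $\#G=6\cdot\#I(\mathfrak{P}|p)\cdot[L:\mathbb{Q}(\zeta_7)]$, inertia is nontrivial, $7\nmid\#I$ in the potentially good case, and $\#I\in\{2,14\}$ in the potentially multiplicative case. On that last point you are more careful than the paper, which only cites Proposition~\ref{prop:3}. Your order-$42$ case is exactly the paper's argument for 7B.1.6.

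Step~1 needs to be pinned down. The paper reads off from Sutherland's table that the only groups with fixed-vector index $2$ are 7B.1.6 and 7B.6.1. Your guess 7Ns.2.1 is not among them, and orders $6$ and $12$ do not arise. Also, for $\#G=84$ you have $\#I\mid 14$, so $\#I=6$ is impossible.

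The genuine gap is your plan for the order-$84$ group 7B.6.1. It is generated by $-1=\mathrm{diag}(6,6)$, $\mathrm{diag}(1,3)$ and $\begin{pmatrix}1&1\\0&1\end{pmatrix}$.
\begin{itemize}
\item The stabilizer of the new point is $H=\left\{\begin{pmatrix}1&*\\0&*\end{pmatrix}\right\}$. It contains $\mathrm{diag}(1,3)$, whose determinant is not a square.
\item The element $-1$ has determinant $1$ but does not lie in $H$.
\end{itemize}
So $H\neq\det^{-1}\bigl((\mathbb{F}_7^\times)^2\bigr)$. In fact $K\not\subset\mathbb{Q}(\zeta_7)$ for every curve with this image; these curves are the twists by $d\neq -7$ of curves with a rational point of order $7$. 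The identity you propose to verify is therefore false. No argument that ignores inertia can give $K=\mathbb{Q}(\sqrt{-7})$ in this case.

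What you need is the inertia constraint $\#I\cdot[L:\mathbb{Q}(\zeta_7)]=14$ with $\#I\in\{2,14\}$.
\begin{itemize}
\item If $\#I=14$, then $L=\mathbb{Q}(\zeta_7)$.
\item If $\#I=2$, its generator has order $2$ and determinant $1$, because $\mathbb{Q}(\zeta_7)\subset L$. The only such element of $G$ is $-1$. The only index-$2$ subgroup of $G$ containing $-1$ is $\det^{-1}$ of the squares, so $\mathbb{Q}(\sqrt{-7})$ is the only quadratic subfield of $L$.
\end{itemize}
In both cases $K\subset L$ forces $K=\mathbb{Q}(\sqrt{-7})$, which is the paper's conclusion. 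Combined with $-1\notin H$, this shows the case cannot actually occur. Equivalently, $K\mathbb{Q}(\zeta_7)\subset L$ would make $[L:\mathbb{Q}(\zeta_7)]$ even, whereas the constraint makes it $7$ or $1$.
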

\begin{proof}
Again, \cite{Zywina} studies this case and shows that $\overline{\rho}_{E,7}(G_{\mathbb{Q}})$ is conjugate in $\GL_2(\mathbb{F}_7)$ to a group from a list of 16 possible groups (see \cite[Theorem 1.5]{Zywina}). Sutherland studies these groups and determines in each case the index of the largest subgroup that fixes a nonzero vector in $\mathbb{F}_{7}^2$; this quantity coincides with the degree of the minimal extension $K/\mathbb{Q}$ over which $E$ has a rational $7$-torsion point.
\\~\\
In Table 3 (page 65 of \cite{Sutherland}), we can find the list of these 16 groups together with these indices. There are only two of them for which this degree is $2$, namely those labeled 7B.1.6 and 7B.6.1. Let us examine each of these cases:
\begin{itemize}
    \item 7B.1.6 is a group of order 42, generated by the matrices
    \begin{equation*}
        \begin{pmatrix} 6 & 0\\ 0 & 4\end{pmatrix}, \begin{pmatrix} 1 & 1\\ 0 & 1\end{pmatrix}
    \end{equation*}
    Similarly to the case $l = 5$, we have that $\mathbb{Q}(\zeta_7)\subset L$. Since the group has order $42$, we have
    $$
    7\cdot6 = [\mathbb{Q}(E[7]):L][L:\mathbb{Q}] = \#I(\mathfrak{P}|p)\cdot[L:\mathbb{Q}(\zeta_7)]\cdot[\mathbb{Q}(\zeta_7):\mathbb{Q}] = \#I(\mathfrak{P}|p)\cdot[L:\mathbb{Q}(\zeta_7)]\cdot 6.
    $$
    Inertia cannot be trivial because the reduction of $E$ is additive at $p$. Then, $\#I(\mathfrak{P}|p) = 7$. However, $\Phi_p\subset I(\mathfrak{P}|p)$ is not trivial and Proposition \ref{prop:3} implies that $7\nmid \#\Phi_p$, which is a contradiction.

    \item 7B.6.1 is a group of order 84, generated by the three matrices
    \begin{equation*}
        \begin{pmatrix} 6 & 0\\ 0 & 6\end{pmatrix},  
        \begin{pmatrix} 1 & 0\\ 0 & 3\end{pmatrix},
        \begin{pmatrix} 1 & 1\\ 0 & 1\end{pmatrix}
    \end{equation*}
    Again, $\mathbb{Q}(\zeta_7)\subset L$, so we have the relation
    $$
    7\cdot 2\cdot6 = \#I(\mathfrak{P}|p)\cdot[L:\mathbb{Q}(\zeta_7)]\cdot [\mathbb{Q}(\zeta_7):\mathbb{Q}] = \#I(\mathfrak{P}|p)\cdot[L:\mathbb{Q}(\zeta_7)]\cdot 6.
    $$
    
    Hence, $\#I(\mathfrak{P}|p) \mid 2\cdot 7$. Proposition \ref{prop:3} shows that $7\nmid \#I(\mathfrak{P}|p)$. Therefore, we obtain that $\#I(\mathfrak{P}|p) = 2$ or $ \#I(\mathfrak{P}|p) = 14$. In this way, $L$ contains only one quadratic extension, which must be the same one contained in $\mathbb{Q}(\zeta_7)$, namely $\mathbb{Q}(\sqrt{-7}) = K$.
\end{itemize}    
\end{proof}

So we have proven the following result
\begin{proposition}\label{prop:8}
     Let $E/\mathbb{Q}$ be a rational elliptic curve such that there exists a point $P\in E(K)[7]\setminus E(\mathbb{Q})$, where $K = \mathbb{Q}(\sqrt{d})$. Let $p\neq 7$ be a prime number such that $E$ has additive reduction $p$. If $p$ does not ramify in $K$, then $K=\mathbb{Q}(\sqrt{-7})$. 
\end{proposition}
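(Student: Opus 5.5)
Proposition \ref{prop:8} is word for word the statement of Proposition \ref{prop:7}, so the plan is to point out that it has already been established and to review the argument to confirm nothing depends on extra hypotheses. We suppose $E$ has additive reduction at $p\neq 7$ and that $p$ is unramified in $K$. Fix $\mathfrak{P}\mid p$ in $\mathbb{Q}(E[7])$ and put $L=\mathbb{Q}(E[7])^{I(\mathfrak{P}|p)}$. Then $K\subset L$, since $K$ is unramified at $p$ and $K\subseteq\mathbb{Q}(E[7])$. Also $\mathbb{Q}(\zeta_7)\subset L$: the Weil pairing gives $\mathbb{Q}(\zeta_7)\subset\mathbb{Q}(E[7])$, and $\mathbb{Q}(\zeta_7)$ is ramified only at $7$.

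The image $\overline{\rho}_{E,7}(G_{\mathbb{Q}})$ is one of Zywina's sixteen groups \cite[Theorem 1.5]{Zywina}. A $7$-torsion point defined over a quadratic field, but not over $\mathbb{Q}$, means the minimal degree of a field of definition of a $7$-torsion point is $2$. Sutherland's table \cite{Sutherland} shows this leaves only the groups 7B.1.6, of order $42$, and 7B.6.1, of order $84$.

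In each case we compare degrees, $[\mathbb{Q}(E[7]):\mathbb{Q}]=\#I(\mathfrak{P}|p)\,[L:\mathbb{Q}(\zeta_7)]\cdot 6$. Additive reduction makes the inertia nontrivial. It surjects onto $\Phi_p\neq 1$, whose order has no factor $7$ by Proposition \ref{prop:3}.
\begin{itemize}
\item For 7B.1.6 we get $\#I(\mathfrak{P}|p)\mid 7$, so $\#I(\mathfrak{P}|p)=7$. Then $\#\Phi_p$ would divide $7$ and be nontrivial, which is impossible.
\item For 7B.6.1 we get $\#I(\mathfrak{P}|p)\in\{2,14\}$. Then $[L:\mathbb{Q}]$ is $42$ or $6$.
\end{itemize}
The main obstacle is the last step, showing that $L$ has a unique quadratic subfield in the 7B.6.1 case. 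I would handle it by computing in the explicit group. The generators are $\mathrm{diag}(6,6)$, $\mathrm{diag}(1,3)$ and the unipotent matrix, so the group is $\mathcal{C}_7\rtimes(\mathcal{C}_6\times\mathcal{C}_2)$. Every quotient of the Galois group of $L$ is a quotient of the full image. The abelianization of the full image is $\mathcal{C}_6\times\mathcal{C}_2$, which has three quadratic quotients, but cutting out the inertia subgroup removes one of them. Hence the only quadratic subfield of $L$ is the one in $\mathbb{Q}(\zeta_7)$, namely $\mathbb{Q}(\sqrt{-7})$. Since $K\subset L$, this gives $K=\mathbb{Q}(\sqrt{-7})$.
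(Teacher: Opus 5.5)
Your outline is the paper's own proof. The paper proves this statement as Proposition~\ref{prop:7} and then restates it. You reduce to 7B.1.6 and 7B.6.1 via Zywina and Sutherland, and you dispose of 7B.1.6 because $\#I(\mathfrak{P}|p)=7$ is incompatible with $\Phi_p$. In 7B.6.1 you conclude from $K\subseteq L$ together with the uniqueness of the quadratic subfield of $L$.

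The paper merely asserts that uniqueness, and your justification of it, the step you yourself flagged, does not work as written. Write elements of 7B.6.1 as $g=\begin{pmatrix} a & b\\ 0 & d\end{pmatrix}$ with $a=\pm 1$. The three quadratic characters are $g\mapsto a$, $g\mapsto \left(\frac{d}{7}\right)$, and their product $g\mapsto\left(\frac{\det g}{7}\right)$; the last one cuts out $\mathbb{Q}(\sqrt{-7})$. If inertia only ``removes one of them'', two quadratic subfields of $L$ survive and $K=\mathbb{Q}(\sqrt{-7})$ does not follow. You also never locate $I=I(\mathfrak{P}|p)$ inside the group. The missing step is this:
\begin{itemize}
\item Since $\mathbb{Q}(\zeta_7)\subseteq L$, we have $I\subseteq\ker\det=\left\{\begin{pmatrix}\pm1 & b\\ 0&\pm1\end{pmatrix}\right\}\cong\mathcal{C}_{14}$, whose only involution is $-1$.
\item Since $\#I\in\{2,14\}$ is even, $-1\in I$.
\item Because $\left(\frac{-1}{7}\right)=-1$, both $a$ and $\left(\frac{d}{7}\right)$ are nontrivial on $-1$. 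So inertia kills \emph{two} of the three characters, and only $\left(\frac{\det}{7}\right)$ survives.
\end{itemize}
A smaller point: $L$ is not a priori Galois over $\mathbb{Q}$. The correct formulation is about index-$2$ subgroups of the image that contain $I$.

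Done correctly, this computation shows more. $P$ must lie on the stable line, since every other nonzero vector has an orbit of size at least $7$. So $K=\mathbb{Q}(P)$ is cut out by the character $a$, which is nontrivial on $-1\in I$. Hence $K\not\subseteq L$, contradicting the hypothesis that $p$ is unramified in $K$. The 7B.6.1 case is therefore impossible, just like 7B.1.6. Uniformly, in both cases $I\subseteq\ker\det\cap\ker a$, the unipotent subgroup of order $7$. That is incompatible with $I$ having the nontrivial quotient $\Phi_p$ of order prime to $7$.

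Your deduction, like the paper's (``$K\subseteq L$ and $L$ has a unique quadratic subfield, so $K=\mathbb{Q}(\sqrt{-7})$''), remains formally valid once the uniqueness is justified. It argues from inconsistent premises, though. Stating the contradiction directly is cleaner, and it shows that no prime $p\neq 7$ of additive reduction can be unramified in $K$. The paper only reaches that later, by combining this proposition with Proposition~\ref{prop:11} and Tate's algorithm.
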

Proposition \ref{prop:8} and Proposition \ref{prop:6} are the conclusions of the analysis of the image of inertia. In order to go from them to the conclusions of Theorem \ref{thm:4}, we need to use the techniques used in the proof of Theorem \ref{thm:5}. 
\\~\\
We separate the proofs prime by prime
\section{Prime $\ell = 5$}
We start by refining Proposition \ref{prop:6}. Our goal is to prove that $K$ cannot be $\mathbb{Q}(\sqrt{5})$ and conclude that the statement of Theorem \ref{thm:4} for $p\neq 5$ is true. We begin with two lemmas for the case $p=2$.
\begin{lemma}\label{lemma:3}
Let $E/\mathbb{Q}$ be an elliptic curve with a minimal model at $2$ of the form
$$
E: y^2 =  x^3 +a_2x^2 + a_4x + a_6.
$$
 Suppose that there exists $P\in E(\mathbb{Q}(\sqrt{5}))[5]\setminus E(\mathbb{Q})$, then $E$ does not have additive reduction at $2$.
\end{lemma}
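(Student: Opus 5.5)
We argue by contradiction: assume $E$ has additive reduction at $2$. Since $2$ is unramified in $K=\mathbb{Q}(\sqrt{5})$ ($5\equiv 1 \bmod 4$), we may take $d=5$, so $v_2(d)=0$. Because the minimal model has $a_1=a_3=0$, Remark~\ref{rmk:2} applies. We never pass to $E_2$, and by Remark~\ref{rmk:3} the $2$-adic valuation of the discriminant and the reduction type are preserved by the twist. Concretely, the twist
\[
E_3: y^2 = x^3+5a_2x^2+25a_4x+125a_6
\]
has a $5$-torsion point $Q=(x_1,y_1)$ with integral coordinates by Nagell--Lutz. Following Section~\ref{section:3}, we bring $E_3$ to the model $E_4$ and then to Tate normal form $\mathcal{T}_{b,b}$ with $b=c$. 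Equation~\eqref{eq:3} then reads
\[
b^5(b^2-11b-1)\,(2y_1)^4 = b^4\, 5^6\, \Delta_1,
\qquad\text{so}\qquad
b\,(b^2-11b-1)\,2^4 y_1^4 = 5^6\Delta_1 .
\]
Here $b=-\overline{a}_2^3/\overline{a}_3^2$ with $\overline{a}_3=2y_1$ and $\overline{a}_1=2s$.

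The core of the argument is a $2$-adic case analysis on $v_2(b)$, in the style of the proof of Proposition~\ref{prop:5}.
\begin{itemize}
\item If $v_2(b)=0$, then $b^2-11b-1\equiv b^2+b+1\equiv 1 \pmod 2$, so $v_2(b^2-11b-1)=0$.
\item If $v_2(b)>0$, then $v_2(b^2-11b-1)=0$ again.
\item If $v_2(b)<0$, then $v_2(b^2-11b-1)=2v_2(b)$.
\end{itemize}
In every case we obtain a linear relation between $v_2(\Delta_1)$, $v_2(y_1)$ and $v_2(b)$. Additive reduction forces $v_2(\Delta_1)\ge 2$ by Ogg's formula (Theorem~\ref{thm:6}) together with $f\ge 2$. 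By Remark~\ref{rmk:4} (note $2\nmid 5$), $Q$ reduces to the singular point. So on $E_4$ we have $v_2(\overline{a}_3)>0$, and Step 2 of Tate's algorithm gives $v_2(\overline{a}_1^2+4\overline{a}_2)>0$. Hence $v_2(\overline{a}_1)>0$, which yields information on $v_2(s)$.

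We then also take valuations in $b=-\overline{a}_2^3/\overline{a}_3^2$ and in $c=b$, i.e.\ $1-\overline{a}_1\overline{a}_2/\overline{a}_3=b$. This gives equations such as $v_2(b)=3v_2(\overline{a}_2)-2-2v_2(y_1)$, together with constraints on $v_2(s)+v_2(\overline{a}_2)-1-v_2(y_1)$ depending on whether $v_2(b)$ is negative, zero, or positive. We need the parity and valuation of $\overline{a}_2=-s^2+3x_1+5a_2$. Since $Q$ is singular mod $2$, $x_1$ reduces to the singular $x$-coordinate, which is controlled by $a_2$ modulo $2$.

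The goal in each case is one of two contradictions. Either all $v_2(\overline{a}_i)\ge i$, contradicting minimality of $E_4$ (valid because $E_4$ has the same discriminant valuation as the minimal $E_1$ once its coefficients are $2$-integral). Or the discriminant relation forces $v_2(\Delta_1)$ to be $0$ or $1$, contradicting additive reduction.

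The main obstacle is that the coefficients of $E_4$ need not be $2$-integral a priori. In particular $s=(25a_4+10a_2x_1+3x_1^2)/(2y_1)$ may have negative $2$-adic valuation, and $\overline{a}_2$ mixes $s^2$ with $3x_1+5a_2$. I would first dispose of the case $v_2(s)<0$: there $v_2(\overline{a}_2)=2v_2(s)$, and combining the $b$ and $c$ equations should force $v_2(b)=0$ and an incompatible parity, as in the first case of Proposition~\ref{prop:5}. Once $v_2(s)\ge 0$, integrality and minimality are available and Tate's algorithm can be run step by step. If the pure valuation bookkeeping is insufficient, I would fall back on checking the finitely many residue classes of $(a_2,a_4,a_6)$ modulo small powers of $2$ that give additive reduction, using the shape $y^2=x^3+\cdots$ of the model, which makes Steps 2--3 explicit.
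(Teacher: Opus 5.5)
Your setup is correct and matches the paper's: Remark~\ref{rmk:2} with $d=5$, the integral $5$-torsion point $(x_1,y_1)$ on $E_3$, Tate normal form with $b=c$, the relation $b(b^2-11b-1)\,2^4y_1^4=5^6\Delta_1$, a split on the sign of $v_2(b)$, and minimality of $E_4$ via Remark~\ref{rmk:3}. The gap is that the proposal stops where the proof has to begin. No case is actually brought to a contradiction, and everything after the setup says only what ``should'' happen.

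The one concrete prediction is also off. If $v_2(s)<0$ then $v_2(\overline{a}_2)=2v_2(s)$, so $w:=v_2(\overline{a}_1\overline{a}_2/\overline{a}_3)=3v_2(s)-v_2(y_1)\le -3$. Hence $v_2(c)=w$ while $v_2(b)=2w-2$, and $b=c$ forces $w=2$. That is the contradiction; $v_2(b)=0$ never arises.

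The fallback to residue classes of $(a_2,a_4,a_6)$ cannot work either. A minimal model of this shape always has $v_2(c_4)\ge 4$ and $v_2(\Delta)\ge 4$, i.e.\ additive reduction at $2$. So the contradiction has to come from the $5$-torsion point, and its existence is not a congruence condition on the coefficients.

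The missing step is short. Once $v_2(s)\ge 0$, $E_4$ is $2$-integral and minimal. Set $\alpha_i:=v_2(\overline{a}_i)$; then $\alpha_1=1+v_2(s)\ge 1$ and $\alpha_3=1+v_2(y_1)\ge 1$. Compare $v_2(b)=3\alpha_2-2\alpha_3$ with $v_2(c)$, where $c=1-\overline{a}_1\overline{a}_2/\overline{a}_3$:
\begin{itemize}
\item If $v_2(b)<0$, then $v_2(c)=\alpha_1+\alpha_2-\alpha_3$. So $2\alpha_2=\alpha_1+\alpha_3$ and $3\alpha_2<2\alpha_3$, whence $\alpha_3>3\alpha_1\ge 3$ and $\alpha_2\ge 3$.
\item If $v_2(b)=0$, then $3\alpha_2=2\alpha_3$ with $\alpha_3\ge 1$, so $\alpha_3\ge 3$ and $\alpha_2\ge 2$.
\item If $v_2(b)>0$, then $v_2(c)>0$ forces $\alpha_1+\alpha_2=\alpha_3$. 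Together with $3\alpha_2>2\alpha_3$ this gives $\alpha_2>2\alpha_1\ge 2$ and $\alpha_3\ge 4$.
\end{itemize}
In every case $v_2(\overline{a}_i)\ge i$. The substitution $x=4x'$, $y=8y'$ then gives a $2$-integral model with discriminant $2^{-12}\Delta_4$, contradicting minimality of $E_4$. Neither Ogg's formula nor Tate's algorithm is needed.

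The paper handles the cases differently. It disposes of $v_2(b)=0$ using the $y$-derivative $-b$ of $\mathcal{T}_{b,b}$ at $(0,0)$, and of $v_2(b)>0$ by Step~2, since $(1-b)^2-4b$ is odd. It uses the minimality argument only for $v_2(b)<0$.

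Finally, Remark~\ref{rmk:4}, which you already invoke, almost finishes the proof on its own. For additive reduction, the points of $E_3(\mathbb{Q}_2)$ with nonsingular reduction on a minimal model form a subgroup of index at most $4$ (Kodaira--N\'eron). So a point of order $5$ must have nonsingular reduction, which contradicts Remark~\ref{rmk:4} directly.
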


\begin{proof}
    By contradiction suppose that $E$ has additive reduction at $2$. By hypothesis, we have a model
    $$
    E_1: y^2 = x^3 +a_2x^2 + a_4x + a_6.
    $$
     Applying the changes of variables described in Remark \ref{rmk:2}, we obtain the curve
$$
\mathcal{T}_{b,c}:  y^2+ (1-c)xy - by = x^3 - bx^2.
$$
Moreover, since the point has $5$-torsion, $b=c$ and
$$
\Delta_{b,c} = b^5(b^2-11b-1).
$$
Substituting into equation \eqref{eq:3} and taking the $2$-adic valuation we obtain
\begin{equation}\label{eq:22}
    v_2(b) + v_2(b^2- 11b -1) + 4 + 4v_2(y_1) = v_2(\Delta_1).
\end{equation}
We split the argument into cases depending on the valuation of $b$.
\\~\\
\noindent \fbox{$v_2(b)>0$}
\\~\\
In this case, the curve $\mathcal{T}_{b,c}$ has $2$-integral coefficients (integers in $\mathbb{Q}_2$). We apply Tate's algorithm. To apply the first steps, we need the coefficients of the curve to be $2$-integral and the singular point mentioned in Step 2 to be $(0,0)$. 
\\~\\
In our case, the point $(0,0)$ (which is the image of the point $Q$ on the curve $E_3$, see Remark \ref{rmk:2}) is a point of $5$-torsion. Remark \ref{rmk:4} shows that it must reduce to a singular point.
\\~\\
Step $2$ of Tate's algorithm applied to our curve yields that since the reduction at $2$ is additive, then
$$
0<v_2((1-b)^2-4b) = 0,
$$
which is a contradiction.
\\~\\
\noindent \fbox{$v_2(b)=0$}
\\~\\
This case is impossible because, as we have seen before, the point $(0,0)$ is singular. Differentiating with respect to $y$ in the equation of $\mathcal{T}_{b,c}$ and evaluating at $(0,0)$, we deduce that $v_2(b)>0$, a contradiction.
\\~\\
\noindent \fbox{$v_2(b)<0$}
\\~\\
This implies that equation \eqref{eq:22} becomes
$$
3v_2(b) + 4 +4v_2(y_1) = v_2(\Delta).
$$
Since $v_2(b)< 0$, we have $v_2(y_1)\geq 1$, because otherwise $v_2(\Delta)\leq 1$, which is impossible since the reduction is additive. 
\\~\\
Since the point $(0,0)$ has order $5$, we have $b=c$. Together with $v_2(b)<0$, this gives
$$
3v_2(\overline{a}_2)- 2v_2(\overline{a}_3) = v_2(b) = v_2(c) = v_2\left(1- \dfrac{\overline{a}_1\overline{a}_2}{\overline{a}_3}\right) = v_2\left(\dfrac{\overline{a}_1\overline{a}_2}{\overline{a}_3}\right).
$$
Substituting the definitions of $\overline{a}_i$ for $i\in\{1,2,3\}$ we obtain
\begin{equation}\label{eq:23}
    2v_2(\overline{a}_2) = 2v_2(-s^2+3x_1 + 5a_2) = v_2(3x_1^2 + 10a_2x_1 + 5^2a_4) + 1.
\end{equation}
In particular, we obtain $v_2(\overline{a}_2)>0$, which implies that $v_2(s)\geq 0$. We have $v_2(\overline{a}_1),v_2(\overline{a}_2),v_2(\overline{a}_3)\geq 0$. This implies that the coefficients of $E_4$ are integers. Since $E_3$ was a minimal model and the change from $E_3$ to $E_4$ does not change the discriminant, it follows that $E_4$ is also a minimal model, as it has integral coefficients.
\\~\\
Now observe that if $v_2(\overline{a}_1) \geq 1$, $v_2(\overline{a}_2) \geq 2$ and $v_2(\overline{a}_3) \geq 3$, then we could make a change of variables leaving the coefficients integral and decreasing the valuation of the discriminant by a factor of $12$, but this is impossible because the equation of $E_4$ is minimal.
\\~\\
From the definition $\overline{a}_1 = 2s$, we know that $v_2(\overline{a}_1)\geq 1$, since we had already seen that $v_2(s)\geq 0$. On the other hand, we know that
\begin{eqnarray*}
v_2(\overline{a}_2)\geq 1 \\
v_2(\overline{a}_3) = 1 + v_2(y_1)\geq 2
\end{eqnarray*}
Suppose that $v_2(\overline{a}_2)\geq 2$. Since $E_4$ is a minimal model, $v_2(y_1) = 1$. But
$$
0>v_2(b) = 3v_2(\overline{a}_2) -2(v_2(y_1)+1)= 3v_2(\overline{a}_2) -4 \geq 2,
$$
a contradiction. Therefore $v_2(\overline{a}_2) = 1$. Using equation \eqref{eq:23} we obtain
$$
v_2(3x^2 + 10a_2x_1 + 25a_4) = 1.
$$
Now,
$$
0\leq v_2(s) = v_2(3x^2 + 10a_2x_1 + 25a_4)  -1 -v_2(y_1) = -v_2(y_1) <0,
$$
a contradiction.
\end{proof}
The following lemma allows us to discard $\mathbb{Q}(\sqrt{5})$ altogether, when $p = 2$.

\begin{lemma}\label{lemma:4}
Let $E/\mathbb{Q}$ be an elliptic curve. Suppose that there exists $P\in E(\mathbb{Q}(\sqrt{5}))[5]\setminus E(\mathbb{Q})$, then $E$ does not have additive reduction at $2$.
\end{lemma}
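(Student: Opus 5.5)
Lemma \ref{lemma:3} already treats the case where the minimal model at $2$ has $a_1=a_3=0$. The plan is to reduce the general statement to that case, or, where the reduction fails, to repeat the same valuation argument with the general normalization of Section \ref{section:3}.

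Suppose, for contradiction, that $E$ has additive reduction at $2$ and that $P\in E(\mathbb{Q}(\sqrt{5}))[5]\setminus E(\mathbb{Q})$. Since $5\equiv 1 \bmod 4$, the prime $2$ is unramified in $K=\mathbb{Q}(\sqrt5)$. By Remark \ref{rmk:3}, the twist $E^5$ therefore has the same (additive) reduction type at $2$ as $E$. By \cite[Corollary 4]{GonTorI}, $E^5$ has a rational point of order $5$.

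\textbf{Step 1: reduce to Lemma \ref{lemma:3} where possible.} Take a minimal model at $2$ with $a_1,a_3\in\{0,1\}$. If $a_1=a_3=0$, Lemma \ref{lemma:3} applies directly. If $a_1=0$ and $a_3=1$, I would look for a change $y\mapsto y+t$, $x\mapsto x+r$ with $2$-adic integers $r,t$ that kills $a_3$. Since $a_3$ is odd, this is impossible while keeping $u=1$, so in general a direct reduction is not available. The remaining plan is therefore to rerun the proof of Lemma \ref{lemma:3} using the general procedure of Section \ref{section:3}.

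\textbf{Step 2: set up the valuation equation.} Use the model $E_2$ and equation \eqref{eq:2} with $d=5$ and $b=c$:
\[
v_2(b)+v_2(b^2-11b-1)+4v_2(\overline{a}_3) = 12+v_2(\Delta_1),
\qquad \overline{a}_3=2y_1 .
\]
I would then split into cases according to the sign of $v_2(b)$, exactly as in Lemma \ref{lemma:3}.

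\textbf{Step 3: the cases.}
\begin{itemize}
\item If $v_2(b)\ge 0$, then $\mathcal{T}_{b,b}$ is $2$-integral. By Remark \ref{rmk:4}, the point $(0,0)$ reduces to a singular point, which forces $v_2(b)>0$. Step 2 of Tate's algorithm then demands $v_2((1-b)^2-4b)>0$, but this valuation is $0$, a contradiction. For this to work I first need the twisted model to be minimal, which follows from the $c_4$ computation $\overline{c}_4=2^4d^2c_4$ as in Proposition \ref{prop:5}, after rescaling.
\item If $v_2(b)<0$, compare $v_2(b)=3v_2(\overline{a}_2)-2v_2(\overline{a}_3)$ with $v_2(c)=v_2(\overline{a}_1\overline{a}_2/\overline{a}_3)$. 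The aim is to show $v_2(\overline{a}_1)\ge1$, $v_2(\overline{a}_2)\ge2$ and $v_2(\overline{a}_3)\ge 3$, possibly after the scaling $x\mapsto 4x$ is undone. The equation $E_4$ then admits a change of variables that lowers the discriminant below $v_2(\Delta_1)$, contradicting minimality. The borderline subcases with $v_2(\overline{a}_2)=1$ should be excluded as in the last lines of Lemma \ref{lemma:3}, using the formula \eqref{eq:1} for $s$.
\end{itemize}

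\textbf{Main obstacle.} The hard part is the extra factor $2^{12}$ and the fact that $\overline{a}_1=2s$ and $\overline{a}_3=2y_1$ now carry the $b_i$ rather than the $a_i$. Because of this, $E_4$ is not minimal, and one must first divide out $2$-powers to reach a model $E_4'$ with integral coefficients and $v_2(\Delta)=v_2(\Delta_1)$, as in the $\ell=3$, $p=2$ lemma. Tracking how $v_2(s)$ and $v_2(y_1)$ behave under this rescaling is where I would spend most care. I would first try to show $v_2(y_1)\ge 2$ from the discriminant equation and additivity, since this guarantees the rescaling step, and then run Tate's algorithm on the rescaled Tate-form model.
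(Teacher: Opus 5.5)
There is a genuine gap, and it sits exactly where you abandoned the reduction to Lemma~\ref{lemma:3}. Your case $v_2(b)<0$ is only stated as an aim, and the mechanism you name gives no contradiction. Because $\Delta_4=2^{12}5^6\Delta_1$, the model $E_4$ is expected to be non-minimal at $2$ by exactly one scaling. So reaching $v_2(\overline{a}_1)\ge1$, $v_2(\overline{a}_2)\ge2$, $v_2(\overline{a}_3)\ge3$ only brings the discriminant down to $v_2(\Delta_1)$, not below it. You notice this yourself under ``Main obstacle'', but the rescaled analysis is never carried out, so the hard case remains unproved.

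The missing idea is that you do not need a change of variables that removes $a_3$. You need to show that $a_1=1$ or $a_3=1$ is incompatible with additive reduction at $2$.
\begin{itemize}
\item For $a_1$: since $c_4=b_2^2-24b_4\equiv a_1^4 \pmod 2$, additivity forces $a_1=0$. This is the first thing the paper does, via $b_2=a_1^2+4a_2$.
\item For $a_3$: the paper stays inside the case $v_2(b)<0$. It shows $v_2(x_1),v_2(y_1)\ge 2$, then uses the explicit formula for $s$ to control $v_2(\overline{a}_2)$ and deduce $v_2(y_1)\ge 3$. Reading the equation of $E_3$ at $(x_1,y_1)$ then gives $v_2(b_6)\ge1$, i.e.\ $a_3=0$, and Lemma~\ref{lemma:3} finishes.
\end{itemize}
There is an even more direct route: once $a_1$ is even, $\Delta_1\equiv a_3^4 \pmod 2$, so $a_3=1$ would mean good reduction at $2$. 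Hence additivity gives $a_1=a_3=0$ at once, and Lemma~\ref{lemma:3} applies with no case analysis.

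Your $v_2(b)\ge0$ case agrees with the paper, but the minimality detour there is unnecessary. For $v_2(b)>0$, the model $\mathcal{T}_{b,b}$ has $b_2=(1-b)^2-4b$ odd. It is therefore minimal with multiplicative reduction, which contradicts the additivity of $E^5$ at $2$.
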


\begin{proof}
    By contradiction, suppose that $E$ has additive reduction at $2$. We can always find a minimal equation for $E$
    $$
    E_1 : y^2 + a_1xy + a_3y = x^3 + a_2x^2 + a_4x + a_6
    $$
    with integer coefficients and such that $a_1,a_3\in\{0,1\}$ and $a_2\in\{-1,0,1\}$. After performing the procedure explained in Section \ref{section:3}, we obtain the twist of $E_1$ in Tate normal form
$$
\mathcal{T}_{b,c}:  y^2+ (1-c)xy - by = x^3 - bx^2
$$
with $b :=- \dfrac{\overline{a}_2^3}{\overline{a}_3^2}$ and $c:= 1- \dfrac{\overline{a}_1\overline{a}_2}{\overline{a}_3}$. Moreover, since the point has $5$-torsion, $b=c$ and
$$
\Delta_{b,c} = b^5(b^2-11b-1),
$$
(see Section \ref{section:3}). Substituting into equation \eqref{eq:2}  and applying the $2$-adic valuation we obtain
\begin{equation*}
    v_2(b) + v_2(b^2-11b-1)+ 4v_2(\overline{a}_3) = 12 + v_2(\Delta_1).
\end{equation*}
Using the definition of $\overline{a}_3$, we obtain
\begin{equation}\label{eq:24}
        v_2(b) + v_2(b^2-11b-1)+ 4v_2(y_1) = 8 + v_2(\Delta_1).
\end{equation}
Since the curve $E_3$ has additive reduction at $2$ and the point $(x_1,y_1)$ has order $5$, this point must reduce to a singular point (see Remark  \ref{rmk:4}). Taking the partial derivatives in the equation of $E_3$ and setting them equal to $0$ modulo $2$, we obtain that $v_2(3x_1^2)>0$, hence $x_1$ is even. The equation of $E_3$ directly implies that $y_1$ is also even. 
\\~\\
Moreover, since the reduction of the original curve is additive, applying the formulas of \cite[III.1]{Silverman} and Proposition \cite[VII.5.1]{Silverman}, we have $0<v_2(c_4) = v_2(b_2^2-24b_4)$. The definition of $b_2$ is
$$
b_2 = a_1^2 +4a_2.
$$
Since $a_1\in\{0,1\}$, it follows that $a_1$ must be $0$, because otherwise $b_2$ would be odd and $c_4$ would also be odd. Applying now the equation of $E_3$ at the point $(x_1,y_1)$, we obtain
$$
2v_2(y_1) = v_2(x_1^3 + 5b_2x_1^2 +8\cdot5^2b_4x_1 + 16\cdot 5^3b_6)\geq 3.
$$
Hence $v_2(y_1)\geq 2$ and therefore
$$
v_2(x_1^3 + 5b_2x_1^2 +8\cdot5^2b_4x_1 + 16\cdot 5^3b_6)\geq 4.
$$
Using that $v_2(b_2) \geq 2$, we obtain that $v_2(x_1)\geq 2$. We divide the proof into cases as in the proof of the previous theorem, that is according to the sign of the valuation of $b$.
\\~\\
\noindent \fbox{$v_2(b)>0$}
\\~\\
As in the previous lemma, we can apply Tate's algorithm to the curve
$$
\mathcal{T}_{b,c}:  y^2+ (1-c)xy - by = x^3 - bx^2.
$$
Step $2$ of Tate's algorithm applied to our curve yields that since the reduction at $2$ is additive, then
$$
0<v_2((1-b)^2-4b) = 0,
$$
which is a contradiction.
\\~\\
\noindent \fbox{$v_2(b)=0$}
\\~\\
This case is impossible because, since the point $(0,0)$ is singular, differentiating with respect to $y$ in the equation of $\mathcal{T}_{b,c}$ and evaluating at $(0,0)$, we deduce that $v_2(b)>0$, a contradiction.
\\~\\
\noindent \fbox{$v_2(b)<0$}
\\~\\
In this case, using that $b=c$, we obtain
$$
3v_2(\overline{a}_2)- 2v_2(\overline{a}_3) = v_2(b) = v_2(c) = v_2\left(1- \dfrac{\overline{a}_1\overline{a}_2}{\overline{a}_3}\right) = v_2\left(\dfrac{\overline{a}_1\overline{a}_2}{\overline{a}_3}\right).
$$
Substituting the definitions of $\overline{a}_i$ for $i\in\{1,2,3\}$ we obtain
\begin{equation}\label{eq:25}
    2v_2(\overline{a}_2) = 2v_2(-s^2+3x_1 + 5a_2) = v_2(3x_1^2 + 10a_2x_1 + 25a_4) + 1.
\end{equation}
We now study the valuation of $\overline{a}_2$. Substituting the definition of $s$ and taking the $2$-adic valuation we obtain
\begin{eqnarray*}
    v_2(\overline{a}_2) &=& v_2(100b_2^2x_1^2 + 60b_2x_1^3 + 9x_1^4 + 4000b_2b_4x_1 + 1200b_4x_1^2 - 4db_2y_1^2 - 12x_1y_1^2 + 40000b_4^2) \\
    &-& 2(v_2(y_1)+1).
\end{eqnarray*} 
Looking at the valuation of each term in the first summand on the right-hand side, one checks that
$$
v_2(100b_2^2x_1^2 + 60b_2x_1^3 + 9x_1^4 + 4000b_2b_4x_1 + 1200b_4x_1^2 - 4db_2y_1^2 - 12x_1y_1^2 + 40000b_4^2)\geq 8.
$$
Applying the definition of $b$, we obtain
\begin{eqnarray*}
    0>v_2(b) &=& 3v_2(4b_2^2x_1^2 + 12b_2x_1^3 + 9x_1^4 + 32b_2b_4x_1 + 48b_4x_1^2 - 20b_2y_1^2 - 12x_1y_1^2 + 64b_4^2)\\
&-& 8(v_2(y_1)+1).
\end{eqnarray*}
In particular, $v_2(y_1)\geq 3$, otherwise $v_2(b)\geq 0$, a contradiction. Returning to the relation between $x_1$ and $y_1$,
$$
v_2(x_1^3 +5b_2x_1^2 + 8\cdot 5^2 b_4x_1 + 16\cdot 5^3b_6)\geq 6.
$$
Since we already knew that $v_2(x_1)\geq 2$ and $v_2(b_2) = 2$, it follows that $v_2(b_6) \geq 1$. But $b_6 = a_3^2 + 4a_6$ with $a_3\in\{0,1\}$. Hence $a_3 = 0$ and since we had already seen that $a_1 = 0$, the previous lemma yields the desired contradiction.
\end{proof}
\begin{proposition}\label{prop:9}
Let $E/\mathbb{Q}$ be an elliptic curve and let $p\neq 5$ be a prime. Suppose that there exists $P\in E(\mathbb{Q}(\sqrt{5}))[5]\setminus E(\mathbb{Q})$. Then $E$ does not have additive reduction at $p$. 
\end{proposition}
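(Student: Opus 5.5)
Since $K=\mathbb{Q}(\sqrt{5})$ is ramified only at $5$, every prime $p\neq 5$ is unramified in $K$. This lets me combine Theorem \ref{thm:3} with Lemma \ref{lemma:4} and with the inertia analysis behind Proposition \ref{prop:6}. The case $p=2$ is exactly Lemma \ref{lemma:4}, so we may assume $p\geq 3$, $p\neq 5$, and suppose for contradiction that $E$ has additive reduction at $p$.

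\textbf{Step 1: reduce to the two possible images.} We redo the argument of Proposition \ref{prop:6}, now keeping track of the fact that $K\subset\mathbb{Q}(\zeta_5)$. Set $L=\mathbb{Q}(E[5])^{I(\mathfrak{P}|p)}$. Since $p\neq 5$, we have $\mathbb{Q}(\zeta_5)\subset L$, and $I(\mathfrak{P}|p)$ is non-trivial by additivity. From the proof of Proposition \ref{prop:6}:
\begin{itemize}
\item the cases 5Cs.1.3 and 5B.1.4 are impossible outright;
\item in the cases 5Cs.4.1 and 5B.4.1, the image of inertia has order $2$ or $10$. Since $5\nmid\#\Phi_p$ for $p\neq 5$ by Proposition \ref{prop:3}, and the image is isomorphic to $\Phi_p$ when the reduction is potentially good, this forces $\#\Phi_p=2$.
\end{itemize}
So $E$ has either Kodaira symbol $I_0^*$ (potentially good reduction with $\#\Phi_p=2$, for $p\geq 5$), or potentially multiplicative reduction with $\Phi_p$ of order $2$. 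In both situations $E$ becomes semistable at $p$ over a ramified quadratic extension of $\mathbb{Q}_p$. For $p=3$ I would use the list in Proposition \ref{prop:3} in the same way; the only order compatible with dividing $10$ is again $2$.

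\textbf{Step 2: kill the order-$2$ inertia with an explicit model.} For order-$2$ inertia, the natural move is to twist away the additive reduction. There is a quadratic character $\chi$ ramified at $p$ such that $E^\chi$ has good or multiplicative reduction at $p$. I would then look at how the $5$-torsion point behaves.

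On $E[5]$, inertia acts through the scalar $-1$ in the potentially good case, since an element of order $2$ in $\Phi_p\hookrightarrow \SL_2$ is $-I$. But $P$ is defined over $K$, which is unramified at $p$, so inertia at $p$ must fix $P$. This contradicts the action being $-I$, which fixes no nonzero vector.

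In the potentially multiplicative case, inertia acts via the quadratic ramified character times the Tate-curve inertia action of Proposition \ref{prop:1} (unipotent, since $p\neq 5$). So again $-1$ acts on the line fixed by the unipotent part, and $P$ cannot be inertia-invariant unless it lies outside that line. I would rule this out by noting that the fixed vectors of $\pm\begin{pmatrix}1&*\\0&1\end{pmatrix}$ with sign $-1$ are none.

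\textbf{Main obstacle.} The delicate point is justifying that inertia acts on $E[5]$ by $-1$ times something unipotent, including for $p=3$ where Proposition \ref{prop:3} gives less information. If the abstract argument becomes shaky there, I would fall back on the computational method of Lemma \ref{lemma:4}: pass to the Tate normal form with $b=c$, use Remark \ref{rmk:4} to see that $(0,0)$ reduces to a singular point, and run Tate's algorithm. Step 2 of the algorithm gives $v_p((1-b)^2-4b)>0$, which contradicts the case analysis on $v_p(b)$, with Remark \ref{rmk:3} providing minimality because $p\nmid d_K$.
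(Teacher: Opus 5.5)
Your proof is correct, and it takes a genuinely different route from the paper's.

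\textbf{What you do differently.} For $p\geq 3$ the paper works with explicit models. It passes to the Tate normal form $\mathcal{T}_{b,b}$ of the twist, splits on the sign of $v_p(b)$, and derives contradictions from Tate's algorithm and the minimality of $E_4$. You instead use that $I_p$ fixes $P$, because $\mathbb{Q}(\sqrt{5})$ is unramified at $p$. You then produce an inertia element with no nonzero fixed vector in $E[5]$:
\begin{itemize}
\item in the potentially good case, the element $-I$ (inertia lands in $\SL_2(\mathbb{F}_5)$ by the Weil pairing, since $p\neq 5$);
\item in the potentially multiplicative case, any $\sigma\in I_p$ with $\chi(\sigma)=-1$, which acts as $-u$ with $u$ unipotent. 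Such $\sigma$ exists because $\chi$ is ramified.
\end{itemize}
Your remark about $P$ ``lying outside that line'' is muddled. The element $-u$ has $-1$ as its only eigenvalue, so it fixes no nonzero vector at all. Your last sentence there states the right conclusion.

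\textbf{What your approach buys.} It gives much more than you use, and your Step~1, with its Sutherland--Zywina input, is superfluous. In the potentially good case $\Phi_p\hookrightarrow\SL_2(\mathbb{F}_\ell)$ with $\#\Phi_p\mid 24$. A nontrivial element of $\SL_2(\mathbb{F}_\ell)$ of order prime to $\ell$ cannot have eigenvalue $1$, since it would then be unipotent and hence of order $\ell$. So $E[\ell]^{I_p}=0$ for $\ell\in\{5,7\}$ and every prime $p\neq\ell$ of additive reduction. This includes $p=2$, which makes Lemma~\ref{lemma:4} unnecessary. It also includes $p=3$, so your ``main obstacle'' disappears. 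There you should cite $\#\Phi_p\mid 24$ rather than the list in Proposition~\ref{prop:3} as printed, which contains a spurious order $5$ for $p=3$.

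This yields Propositions~\ref{prop:9} and~\ref{prop:11} directly. Combined with Theorem~\ref{thm:3}, it gives the $p\neq\ell$ part of Theorem~\ref{thm:4} for $\ell\in\{5,7\}$ in a few lines, bypassing Propositions~\ref{prop:6}--\ref{prop:8}. Equivalently, sharpening Remark~\ref{rmk:4}: $E(\mathbb{Q}_p^{\mathrm{nr}})[\ell]$ injects into the N\'eron component group, which has order at most $4$ under additive reduction.

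\textbf{What the paper's approach buys.} The explicit-model method is the same machinery the paper needs at $p=\ell$, as in Theorem~\ref{thm:5}. There inertia acts wildly, and a clean fixed-point argument like yours is not directly available.
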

\begin{proof}
First, by Lemma \ref{lemma:4}, we may assume that $p\neq 2$. We begin with the curve $E$ with a minimal model
$$
E_1 : y^2 + a_1xy + a_3y = x^3 + a_2x^2 + a_4x + a_6
$$
with integer coefficients such that $a_1,a_3\in\{0,1\}$ and $a_2\in\{-1,0,1\}$. 
\\~\\
We argue by contradiction and assume that $E$ has additive reduction at $p$. Following the procedure in Section \ref{section:2}, we obtain the twist of $E_1$ over $\mathbb{Q}\left(\sqrt{5}\right)$ in Tate normal form
$$
\mathcal{T}_{b,c}:  y^2+ (1-c)xy - by = x^3 - bx^2.
$$
with $b :=- \dfrac{\overline{a}_2^3}{\overline{a}_3^2}$ and $c:= 1- \dfrac{\overline{a}_1\overline{a}_2}{\overline{a}_3}$.
\\~\\
Note that for any $p\neq 5$, the reduction type of $\mathcal{T}_{b,c}$ at $p$ is the same as that of $E$, i.e. it is additive reduction, since the twist was taken in an extension unramified at $p$. Moreover, throughout the procedure of Section \ref{section:3} it remains a minimal model at $p$, because, as $p$ does not ramify in $\mathbb{Q}(\sqrt{5})$, $E_1$ is a minimal model over $\mathbb{Q}(\sqrt{5})$ and the $p$-adic valuation of the discriminant of $E_1$ is the same as that of the discriminant of $E_3$.
\\~\\
On the other hand, since the point $(0,0)$ on the curve $\mathcal{T}_{b,c}$ has order $5$, we have $b=c$ and $\Delta_{b,c} = b(b^2-11b-1)$. Substituting into equation \eqref{eq:2}, we obtain
$$
b(b^2-11b-1)\overline{a}_3^4 = 2^{12}5^6\Delta_1.
$$
Taking the $p$-adic valuation and using that $p\neq 2,5$, we obtain
$$
v_p(b) + v_p(b^2-11b-1) + 4v_p(y_1) = v_p(\Delta_1).
$$
We divide the proof according to the sign of the valuation of $b$.
\\~\\
\noindent \fbox{$v_2(b)\geq0$} 
\\~\\
 The point $(0,0)$ on the curve $E_4$ is singular since the reduction is additive (see Remark \ref{rmk:4}). Since $v_p(b)\geq 0$, we can apply Tate's algorithm to the curve $\mathcal{T}_{b,c}$. The first two steps imply that
$$
v_p(b)>0 \quad \text{and} \quad v_p((1-b)^2-4b)>0,
$$
which is a contradiction.
\\~\\
\noindent \fbox{$v_2(b)<0$}
\\~\\
In this case, since $v_2(b)<0$
$$
3v_p(\overline{a}_2)-2v_p(y_1) = v_p(b) = v_p(c) = v_p\left(1-\dfrac{\overline{a}_1\overline{a}_2}{\overline{a}_3}\right) = v_p(3x_1^2+ 10b_2x_1+8\cdot5^2b_4) + v_p(\overline{a}_2) -2v_p(\overline{a}_3).
$$

Hence,
$$
2v_p(\overline{a}_2) = v_p(3x_1^2+ 10b_2x_1+8\cdot5^2b_4)>0.
$$
Note that $v_p(3x_1^2+ 10b_2x_1+8\cdot5^2b_4)>0$ because the point $(x_1,y_1)$ is singular on the curve $E_3$. Therefore,
$v_p(\overline{a}_2) = v_p(-s^2+3x_1+5b_2)>0$, which implies that $v_p(s)\geq 0$. Thus we obtain
$$
v_p(\overline{a}_1),v_p(\overline{a}_2),v_p(\overline{a}_3)\geq 0.
$$
Applying Step $2$ of Tate's algorithm to the curve $E_4$, we obtain
$$
v_p(s^2 + 4\overline{a}_2) >0.
$$todo{¿no sería $(2s)^2 + 4\overline{a}_2$}
Therefore $v_p(s)>0$. Now suppose that $v_p(\overline{a}_2)\geq 2$, then
$$
0> v_p(b) = 3v_p(\overline{a}_2) - 2v_p(y_1)\geq 6 - 2v_p(y_1).
$$
This implies that $v_p(y_1)> 3$. However, this implies that $v_p(\overline{a}_i) \geq i$, which means that we can perform a change of variables that keeps the coefficients of $E_4$ integral at $p$ and decreases the discriminant, contradicting the fact that the equation of $E_4$ is minimal.
\\~\\
Therefore $v_p(\overline{a}_2) = 1$. This implies that
$$
2=2v_p(\overline{a}_2) = v_p(3x_1^2+ 10b_2x_1+8\cdot5^2b_4).
$$

Since $0<v_p(s) = 2 -v_p(y_1)$, we have $0\leq v_p(y_1)\leq 1$, which contradicts
$$
0>v_p(b) = 3- 2v_p(y_1)\geq 1.
$$
\end{proof}

We now proceed to prove Theorem \ref{thm:4} in the case $p=l=5$.

\begin{proposition}\label{prop:10}
Let $E/\mathbb{Q}$ be a rational elliptic curve such that there exists $P\in E(K)[5]\setminus E(\mathbb{Q})$, where $K|\mathbb{Q}$ is a quadratic field. Suppose that $E$ has additive reduction at $5$. Then the following are equivalent
\begin{itemize}
    \item $5| d_K$, where $d_K$ is the discriminant of $K$,
    \item the Kodaira symbol of $E$ at $5$ is neither $II$ nor $III$.
\end{itemize}
Moreover, if $5|d_K$ then the Kodaira symbol of $E$ is $IV^*$ or $III^*$.
\end{proposition}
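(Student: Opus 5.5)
The plan is to split into the two directions, and to use in each a different tool: the twist-and-Tate-normal-form machinery of Section \ref{section:3} when $5\nmid d_K$, and the image-of-inertia dichotomy together with Ogg's formula when $5\mid d_K$. Throughout, $p=\ell=5$, so Proposition \ref{prop:3} applies with $p\ge5$. The possible additive Kodaira symbols are then $II,III,IV,I_0^*,IV^*,III^*,II^*$, together with the potentially multiplicative $I_n^*$. The discriminant satisfies $v_5(\Delta_1)\equiv 2,3,4,6,8,9,10 \pmod{12}$ respectively in the potentially good cases.

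\emph{Step 1 ($5\nmid d_K$ forces $II$ or $III$).} Assume $5$ is unramified in $K$. Following Section \ref{section:3}, I obtain the twist $E_4$ and then $\mathcal{T}_{b,c}$ with $b=c$. By Remark \ref{rmk:3}, $E_4$ is minimal at $5$ whenever its coefficients are $5$-integral, and it has the same reduction type as $E$. From \eqref{eq:2} I get
\[
v_5(b)+v_5(b^2-11b-1)+4v_5(y_1)=v_5(\Delta_1).
\]
Remark \ref{rmk:4} is not available here, since $p=\ell$. Instead I use $5\mid \#\Phi_5$-free information: inertia at $5$ acts through $\Phi_5$ of order $2,3,4,6$, and the new $5$-torsion point lives on a twist that is unramified at $5$.

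I would then split into cases on the sign of $v_5(b)$, as in Lemmas \ref{lemma:3} and \ref{lemma:4}.
\begin{itemize}
\item If $v_5(b)\ge0$, the point $(0,0)$ either reduces to a nonsingular point, which is allowed when $p=\ell$, or to the singular point. In the latter case Step 2 of Tate's algorithm forces $5\mid (1-b)^2-4b$ and $5\mid b$, a contradiction. In the former case the reduced curve carries an element of order $5$ in $\overline{\mathbb{F}}_5^{+}$, which is possible, so $(0,0)$ reduces nonsingularly. Using $b^2-11b-1\equiv (b-3)^2 \pmod 5$, I track $v_5$ of the discriminant of $\mathcal{T}_{b,c}$ and show $v_5(\Delta_1)\in\{2,3\}$.
\item If $v_5(b)<0$, the valuation bookkeeping for $\overline{a}_2$ and $y_1$ should force non-minimality, as in Proposition \ref{prop:9}.
\end{itemize}
Together these give symbol $II$ or $III$. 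I expect this to be the main obstacle: bounding $v_5(\Delta_1)\le 3$ precisely in the good-reduction-of-the-point case. I would first try Tate's algorithm directly on $\mathcal{T}_{b,c}$ after translating the singular point to the origin. An alternative is the Néron-model argument: an unramified twist with a $\mathbb{Q}_5^{\mathrm{nr}}$-rational point of order $5$ reducing into $\tilde E_{ns}\cong\mathbb{G}_a$ forces small conductor, and Ogg's formula (Theorem \ref{thm:6}) with $m\le 3$ then bounds $v_5(\Delta_1)$.

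\emph{Step 2 ($5\mid d_K$ forces $IV^*$ or $III^*$).} Assume $5$ ramifies in $K$. Since $K\subset\mathbb{Q}(E[5])$, the image of inertia $\rho_5(I_5)$ has a character $\chi$ on the line of $P$ with $\chi$ quadratic and ramified. The semistabilization over $\Phi_5$ gives reduction of good or multiplicative type. In the potentially multiplicative case, Proposition \ref{prop:1}(2) over $L$ together with the quadratic twist character should exclude a rational-up-to-quadratic point, or force the case $I_n^*$ to be incompatible. I would rule this out via the Tate curve: a point of order $5$ in a quadratic twist of a Tate curve lies in $\mu_5$, which is not contained in a quadratic extension of $\mathbb{Q}_5$.

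In the potentially good case I would use Serre's description of the action of tame inertia on $E[5]$ by fundamental characters of level $1$ or $2$ raised to powers $e\cdot v(\Delta)/12$. For $P$ to be fixed up to the quadratic character, one of the characters must equal $\omega^{2}$ restricted appropriately, where $\omega$ is the mod $5$ cyclotomic character, so that it equals the quadratic character. Checking the exponents $v(\Delta)\in\{2,3,4,6,8,9,10\}$, I expect only $8$ and $9$, i.e.\ $IV^*$ and $III^*$, to give a character of order exactly $2$ on a line. The small values give order-$1$ characters, which corresponds to the unramified case of Step 1; the cases $I_0^*$, $IV$ and $II^*$ give no such line or the wrong order.

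\emph{Step 3 (assembly).} Steps 1 and 2 give the equivalence together with the \lq\lq moreover\rq\rq\ clause. As a consistency check I would verify that $IV,I_0^*,II^*$ and $I_n^*$ are excluded in both directions, which Steps 1 and 2 do collectively.
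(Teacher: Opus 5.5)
\textbf{Step 2 has a genuine gap: it sets out to prove something false.} Step~2 cannot be completed, because the claim it is meant to establish, that $5\mid d_K$ forces type $IV^*$ or $III^*$, is false. Both of your exclusion arguments fail exactly at the counterexamples.

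\emph{The Tate-curve argument.} It is not true that $5$-torsion on a quadratic twist of a Tate curve lies in $\mu_5$. In $\mathbb{Q}_5^{\times}/q^{\mathbb{Z}}$, if $q$ is a fifth power then $q^{1/5}$ is a rational point of order $5$. Concretely, $\mathcal{T}_{5,5}\colon y^2-4xy-5y=x^3-5x^2$ has $\Delta=-31\cdot 5^5$ and $c_4=-464$. So it has type $I_5$ at $5$, and $(0,0)$ is a rational point of order $5$. Its quadratic twist by $5$ satisfies every hypothesis with $K=\mathbb{Q}(\sqrt5)$ and $5\mid d_K$, yet has type $I_5^*$.

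\emph{The potentially good analysis.} Type $I_0^*$ is not excluded either. The curve 11a1, $y^2+y=x^3-x^2-10x-20$, has good ordinary reduction at $5$ and a rational point of order $5$. Its twist by $5$ has type $I_0^*$ at $5$ and acquires a $5$-torsion point over $\mathbb{Q}(\sqrt5)$. Inertia acts on its $5$-torsion through $\omega^2$ and $\omega^3$. That gives a line carrying exactly the quadratic character, contrary to your expectation that only $v_5(\Delta)\in\{8,9\}$ produce one.

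Your proof of the implication ($5\mid d_K\Rightarrow$ type $\neq II,III$) runs entirely through Step~2, so that half of the equivalence is left unproved. The paper's argument for the ``moreover'' clause has the same hole. It applies Melistas' theorem to $E^d$ without checking that $E^d$ is additive at $5$, and this fails precisely when $E$ has type $I_0^*$ or $I_n^*$.

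\textbf{The repair, and how it compares with the paper.} The paper cites \cite[Theorem 1.2]{Melistas2022PurelyAdditive}: a rational point of order $5$ together with additive reduction at $5$ forces type $II$ or $III$. It combines this with the change of Kodaira symbol under a ramified quadratic twist. Your Step~1 computation, applied to $E^d$ itself, reproves this and settles both directions at once. Write $E^d\cong\mathcal{T}_{b,b}$ over $\mathbb{Q}$, so $\Delta=b^5(b^2-11b-1)$ and $c_4=b^4-12b^3+14b^2+12b+1$.
\begin{itemize}
\item If $v_5(b)>0$, then $c_4\equiv 1\pmod 5$ and $v_5(\Delta)=5v_5(b)>0$, so the reduction is multiplicative.
\item If $v_5(b)<0$, the substitution $x=b^2x'$, $y=b^3y'$ gives an integral model with $v_5(c_4')=0$ and $v_5(\Delta')=-5v_5(b)>0$. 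This is again multiplicative, not non-minimal as you anticipated.
\item If $b$ is a unit with $b\not\equiv 3\pmod 5$, the reduction is good, since $b^2-11b-1\equiv (b-3)^2\pmod 5$.
\item If $b\equiv 3\pmod 5$, the roots $(11\pm5\sqrt5)/2$ of $b^2-11b-1$ lie in a ramified quadratic extension of $\mathbb{Q}_5$. Hence $v_5(b^2-11b-1)\in\{2,3\}$. Since $c_4\equiv 0\pmod 5$, the model is minimal of type $II$ or $III$.
\end{itemize}
Your Ogg-formula alternative cannot replace this case analysis. For $p\ge 5$, additive reduction always has $f=2$, so ``small conductor'' carries no information, and a bound $m\le 3$ would not exclude $IV$.

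With this in hand the proposition follows directly. If $5\nmid d_K$, then $E$ has the same additive type as $E^d$, namely $II$ or $III$. If $5\mid d_K$, then $E$ has the ramified-twist image of $I_0, I_n, II, III$, namely $I_0^*, I_n^*, IV^*, III^*$, which is never $II$ or $III$. This proves the equivalence. It also shows the ``moreover'' clause is only correct once $I_0^*$ and $I_n^*$ are added to the list.
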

\begin{proof}
Suppose that $5|d_K$. If we denote $K=\mathbb{Q}\left(\sqrt{d}\right)$ with $d$ a square-free integer, then the twist $E^d$ has a rational point of order $5$. Mentzelos Melistas  proved (\cite[Theorem 1.2]{Melistas2022PurelyAdditive}) that in this case the Kodaira symbol of $E^d$ at $5$ must be $II$ or $III$. Therefore, $E$ has the Kodaira symbol $IV^*$ or $III^*$ at $5$, respectively, since the twist relating the two curves is ramified at $5$. See \cite[Lemma 4.4, Table 2]{Paterson2024MultiquadraticSelmer} for the change of Kodaira Symbol under ramified quadratic twist. 
\\~\\
Conversely, if $5\nmid d_K$, the twist relating $E$ and $E^d$ is unramified at $5$. Therefore the Kodaira symbol at $5$ of both curves is the same. Again, the above result by Mentzelos Melistas  implies that $E^d$ has Kodaira symbol $II$ or $III$.
\end{proof}
Propositions \ref{prop:6}, \ref{prop:9} and \ref{prop:10} along with Theorem \ref{thm:3} prove Theorem \ref{thm:4} for the case $\ell = 5$.

\section{Prime $\ell = 7$}
We follow the same strategy of the case $\ell = 5$. We begin with a pair of lemmas.

\begin{lemma}\label{lemma:5}
Let $E/\mathbb{Q}$ be an elliptic curve with a minimal model at $2$ of the form
$$
E: y^2 =  x^3 +a_2x^2 + a_4x + a_6.
$$
 Suppose that there exists $P\in E(\mathbb{Q}(\sqrt{-7}))[7]\setminus E(\mathbb{Q})$. Then $E$ does not have additive reduction at $2$.
\end{lemma}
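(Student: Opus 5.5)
We follow the proof of Lemma \ref{lemma:3}, with $5$ replaced by $7$ and $d=-7$. Assume, for contradiction, that $E$ has additive reduction at $2$. Since $a_1=a_3=0$, Remark \ref{rmk:2} lets us skip the change to $E_2$. We twist by $d=-7$ and obtain
$$E_3: y^2=x^3-7a_2x^2+49a_4x-343a_6$$
with an integral $7$-torsion point $Q=(x_1,y_1)$. From it we pass to $E_4$ and then to the Tate normal form $\mathcal{T}_{b,c}$. Since $(0,0)$ has order $7$, there is $t\in\mathbb{Q}$ with
$$b=t^2(t-1),\qquad c=t(t-1),\qquad \Delta_{b,c}=t^7(t-1)^7(t^3-8t^2+5t+1).$$
We then take $2$-adic valuations in equation \eqref{eq:3}. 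Because $v_2(d)=0$, this gives
$$3v_2(t)+3v_2(t-1)+v_2(t^3-8t^2+5t+1)+4+4v_2(y_1)=v_2(\Delta_1).$$
Remark \ref{rmk:4} applies since $2\nmid 7$: $Q$ reduces to a singular point of $E_3$. Hence $x_1,y_1$ are even, and $s$ and the $\overline{a}_i$ are controlled as in Lemma \ref{lemma:3}. Remark \ref{rmk:3} shows that $E_4$ is minimal at $2$ whenever its coefficients are $2$-integral.

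We split into cases on $v_2(t)$.

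\textbf{Case $v_2(t)>0$ or $v_2(t-1)>0$.} Then $v_2(b)>0$ and $v_2(c)>0$, so $\mathcal{T}_{b,c}$ is $2$-integral and minimal. The point $(0,0)$ is singular, so Step $2$ of Tate's algorithm requires $v_2((1-c)^2+4b)>0$. But $(1-c)^2+4b$ is odd, which is a contradiction.

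\textbf{Case $v_2(t)=0=v_2(t-1)$.} This cannot happen, since every $2$-adic unit is congruent to $1$ modulo $2$.

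\textbf{Case $v_2(t)<0$.} Here $v_2(b)=3v_2(t)$ and $v_2(c)=2v_2(t)$, so $c$ dominates and
$$v_2\!\left(\frac{\overline{a}_1\overline{a}_2}{\overline{a}_3}\right)=2v_2(t),\qquad 3v_2(\overline{a}_2)-2v_2(\overline{a}_3)=3v_2(t).$$
We combine these two relations exactly as in equation \eqref{eq:14}. The target is an equation of the form $v_2(t)=2v_2(\overline{a}_1)-v_2(\overline{a}_2)$, up to shifts coming from $\overline{a}_1=2s$ and $\overline{a}_3=2y_1$. If $v_2(s)<0$, then $v_2(\overline{a}_2)=2v_2(s)$, which forces $v_2(t)=0$, a contradiction. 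So $v_2(s)\ge0$, $v_2(\overline{a}_2)\ge 0$, and $E_4$ is integral and minimal.

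Minimality rules out $v_2(\overline{a}_1)\ge1$, $v_2(\overline{a}_2)\ge2$, $v_2(\overline{a}_3)\ge3$ holding simultaneously. Since $v_2(\overline{a}_1)\ge1$ and $v_2(\overline{a}_3)=1+v_2(y_1)\ge2$, we get two subcases. If $v_2(\overline{a}_2)\ge2$, then $v_2(y_1)=1$, and the $b$-relation $3v_2(\overline{a}_2)-4=3v_2(t)<0$ gives a contradiction. If $v_2(\overline{a}_2)\le1$, then the $c$-relation forces $v_2(s)$ to be negative, contradicting $v_2(s)\ge 0$.

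The main obstacle is this last case $v_2(t)<0$. The $7$-torsion parametrization couples $b$ and $c$ with exponents $3$ and $2$ instead of the equality $b=c$ used for $\ell=5$, so the linear bookkeeping of valuations must be redone. The parity of the resulting equation $3v_2(t)=3v_2(\overline{a}_2)-2v_2(y_1)-2$ should also be exploited. If the direct argument stalls, I would bound $v_2(\Delta_1)$ via Ogg's formula (Theorem \ref{thm:6}) to cap $v_2(y_1)$, as in the proof of Theorem \ref{thm:5}, and finish by listing the few remaining valuation tuples.
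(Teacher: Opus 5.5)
Your proposal is correct and follows the paper's skeleton. That means the reduction via Remark~\ref{rmk:2} to $\mathcal{T}_{b,c}$ with $b=t^2(t-1)$, $c=t(t-1)$, and the split on the sign of $v_2(t)$. It also means Step~2 of Tate's algorithm (oddness of $b_2$) when $v_2(t)\ge 0$, and, when $v_2(t)<0$, the proof that $v_2(s)\ge 0$, so that $E_4$ is integral and minimal.

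You differ in how you close the case $v_2(t)<0$. The paper leans on the discriminant relation \eqref{eq:27}. It treats $v_2(y_1)=1$ separately, uses minimality to force $v_2(\overline a_2)\in\{0,1\}$ when $v_2(y_1)>1$, and disposes of each value through \eqref{eq:30}, the explicit numerator of $s$, and \eqref{eq:27} again. You use only the $b$- and $c$-relations, combined into $v_2(t)=2v_2(\overline a_1)-v_2(\overline a_2)$, plus minimality, and never need $\Delta_1$. This is genuinely shorter, and shorter still than your write-up suggests:
\begin{itemize}
\item once $v_2(s)\ge0$ you have $v_2(\overline a_1)\ge1$, so $v_2(t)<0$ gives $v_2(\overline a_2)\ge 2v_2(\overline a_1)+1\ge 3$;
\item then $2v_2(\overline a_3)=3v_2(\overline a_2)-3v_2(t)\ge 12$;
\item so $v_2(\overline a_i)\ge i$ for $i=1,2,3$, contradicting minimality of $E_4$.
\end{itemize}
In particular the subcase $v_2(\overline a_2)\le 1$ is vacuous. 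There it is the combined identity, not the $c$-relation alone, that forces $v_2(s)<0$. The fallback via Ogg's formula is unnecessary.

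A few statements need correcting, none of them fatal.
\begin{itemize}
\item Since $\Delta_{b,c}/b^4=t^{-1}(t-1)^3(t^3-8t^2+5t+1)$, the coefficient of $v_2(t)$ in your discriminant identity is $-1$, not $3$. For $v_2(t)<0$ the corrected identity is exactly \eqref{eq:27}. You never use it, so nothing breaks.
\item Your claim that $x_1,y_1$ are even does not follow from Remark~\ref{rmk:4} for this model. Reducing $y^2=x^3-7a_2x^2+49a_4x-343a_6$ modulo $2$, the singular point is $(a_4,\,a_2a_4+a_6)$ mod $2$, which need not be $(0,0)$. The parity argument in Lemma~\ref{lemma:4} works only because there the coefficients $8d^2b_4$ and $16d^3b_6$ are even. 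Where you use $v_2(y_1)\ge 1$ (the subcase $v_2(\overline a_2)\ge2$), either take it from \eqref{eq:27} or drop it, since the $b$-relation alone already forces $v_2(\overline a_3)\ge 6$.
\item If $v_2(s)<0$, the combined identity gives $v_2(t)=2$, because of the shift from $\overline a_1=2s$, not $v_2(t)=0$. This is still a contradiction.
\item For $\mathcal{T}_{b,c}$ one has $b_2=(1-c)^2-4b$, not $(1-c)^2+4b$. Also, the minimality of $\mathcal{T}_{b,c}$ you assert is neither justified nor needed, since an odd $b_2$ already makes $c_4$ odd.
\end{itemize}
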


\begin{proof}
Arguing by contradiction, suppose that $E$ has additive reduction at $2$. By hypothesis, we have a minimal model at $2$
$$
E: y^2 =  x^3 +a_2x^2 + a_4x + a_6.
$$
 Applying the changes of variables from section \ref{section:2} and following the indications of Remark \ref{rmk:2}, we obtain the twist of $E$ in Tate normal form
$$
\mathcal{T}_{b,c}:  y^2+ (1-c)xy - by = x^3 - bx^2.
$$

Moreover, since the point $(0,0)$ on this curve has order $7$, there exists $t\in\mathbb{Q}$ such that $b = t^2(t-1)$ and $c = t(t-1)$. The relation with the discriminant of the original curve is
\begin{equation}\label{eq:26}
(t-1)^3(t^3-8t^2 + 5t + 1)\overline{a}_3^4= 7^6 t\Delta_1.    
\end{equation}

We divide the proof into cases according to the sign of the valuation of $t$.
\\~\\
\noindent \fbox{$v_2(t)\geq 0$}
\\~\\
The point $(0,0)$ reduces to a singular point when reducing modulo $2$, since it is a $7$-torsion point and the reduction at $2$ is additive (see Remark \ref{rmk:4}). Therefore the curve $\mathcal{T}_{b,c}$ is centered at a point that reduces to a singular point and the coefficients of this curve are $2$-integral by hypothesis. This allows us to apply Tate's algorithm. 
\\~\\
The first step implies that $ 2v_2(t) + v_2(t-1)= v_2(b)>0$,  and therefore, $ v_2(t) + v_2(t-1)= v_2(c)>0$. Now, since the reduction is additive at $2$, the second step implies that
$$
v_2((1-c)^2-4b)>0,
$$
which is a contradiction because $v_2(1-c)= 0$ and $v_2(b)>0$.
\\~\\
\noindent \fbox{$v_2(t)< 0$}
\\~\\
In this case, taking the $2$-adic valuation in the equation \eqref{eq:26} we obtain
\begin{equation}\label{eq:27}
    4 + 4v_2(y_1) = v_2(\Delta_1) - 5v_2(t),
\end{equation}
where we used that $\overline{a}_3=2y_1$. In particular, this equation implies that $v_2(y_1)>0$. 
\\~\\
We have two other equations given by the definitions of $b$ and $c$. On the one hand,
\begin{equation}\label{eq:28}
    3v_2(t) = 3v_2(\overline{a}_2)- 2(v_2(y_1)+1),
\end{equation}
and on the other hand, since $v_2(c)= v_2\left(1-\dfrac{\overline{a}_1\overline{a}_2}{\overline{a}_3}\right)<0$, we have
\begin{equation}\label{eq:29}
    2v_2(t) = v_2(\overline{a}_1) + v_2(\overline{a}_2)- v_2(\overline{a}_3).
\end{equation}
The rest of the proof consists of using these equations, together with the definitions of the coefficients of the curves from Section \ref{section:3}, to obtain a contradiction.
\\~\\
We begin by proving that the valuation of $s$ is non-negative (see the definition of $s$ in Remark \ref{rmk:2}). If $v_2(s)<0$, the definitions of $\overline{a}_i$ for $i\in\{1,2,3\}$ give
$$
v_2(\overline{a}_1) = 1+ v_2(s), \quad v_2(\overline{a}_2) = 2v_2(s), \quad v_2(\overline{a}_3) = 1+ v_2(y_1).
$$
Substituting into equations \eqref{eq:28} and (\ref{eq:29}) we obtain
$$
\left\{
\begin{aligned}
 3v_2(t) &= 2(3v_2(s)- (v_2(y_1)+1))\\
 2v_2(t) &= 1+ (3v_2(s)-(v_2(y_1)+1))
\end{aligned}
\right.
$$
Multiplying the second equation by $-2$ and adding, we obtain $0>v_2(t) = 2$, a contradiction. Therefore we have proved that $v_2(s)\geq 0$, which implies that
$$
\left\{
\begin{aligned}
 v_2(\overline{a}_1) &= 1+ v_2(s)\geq 1,\\
 v_2(\overline{a}_2) &= v_2(-s^2+ 3x_1 -7a_2) \geq 0,\\
 v_2(\overline{a}_3) &= 1+ v_2(y_1) \geq 2.
\end{aligned}
\right.
$$

If $v_2(y_1) = 1$, equation \eqref{eq:27} gives
$$
8 = v_2(\Delta_1) - 5v_2(t).
$$

Hence $v_2(\Delta_1)= 3$ and $v_2(t) = -1$. Substituting into equation (\ref{eq:28}) we obtain
$$
1 = 3v_2(\overline{a}_2),
$$
which is a contradiction. Therefore $v_2(y_1)>1$. Hence
$$
\left\{
\begin{aligned}
 v_2(\overline{a}_1) &= 1+ v_2(s)\geq 1,\\
 v_2(\overline{a}_2) &= v_2(-s^2+ 3x_1 -7a_2) \geq 0,\\
 v_2(\overline{a}_3) &= 1+ v_2(y_1) \geq 3.
\end{aligned}
\right.
$$
Since $E_4$ is a minimal model, $v_2(\overline{a}_2)\in\{0, 1\}$. Subtracting equation \eqref{eq:28} from \eqref{eq:29} and substituting the definition of $s$ (see Remark \ref{rmk:2}), we obtain
\begin{equation}\label{eq:30}
2v_2(\overline{a}_2) = v_2(7^2a_4 - 2\cdot 7a_2x_1 + 3x_1^2)+ 1 + v_2(t).
\end{equation}
If $v_2(\overline{a}_2) = 0$, then
$$
-v_2(t) = v_2(7^2a_4 - 2\cdot 7a_2x_1 + 3x_1^2)+1\geq v_2(y_1)+2.
$$
In the last inequality we used that $v_2(s)\geq 0$. Using this inequality in equation \eqref{eq:27} we obtain
$$
4 = v_2(\Delta_1) - v_2(t) + 4(-v_2(t)- v_2(y_1)) \geq 8.
$$
This is a contradiction. Finally we apply the same strategy with $v_2(\overline{a}_2) = 1$. In this case, equation \eqref{eq:30} gives
$$
-v_2(t) = v_2(7^2a_4 - 2\cdot 7a_2x_1 + 3x_1^2)-1\geq v_2(y_1).
$$
Using this inequality in equation \eqref{eq:27}  we obtain
$$
4 = v_2(\Delta_1) - v_2(t) + 4(-v_2(t)- v_2(y_1))\geq 4.
$$
This implies that $v_2(\Delta_1) = -v_2(t) = v_2(y_1) = 2$ Substituting these values into equation \eqref{eq:28} we obtain
$$
-6 = 1-2v_2(y_1) = -3,
$$
a contradiction.
\end{proof}
\begin{lemma}\label{lemma:6}
Let $E/\mathbb{Q}$ be an elliptic curve. Suppose that there exists $P\in E(\mathbb{Q}(\sqrt{-7}))[7]\setminus E(\mathbb{Q})$. Then $E$ does not have additive reduction at $2$.
\end{lemma}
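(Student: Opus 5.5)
The plan follows the structure of Lemma \ref{lemma:4}, which reduced the general case at $2$ for $\ell=5$ to the special shape treated in Lemma \ref{lemma:3}. We argue by contradiction: suppose $E$ has additive reduction at $2$, and fix a minimal model $E_1$ at $2$ with $a_1,a_3\in\{0,1\}$ and $a_2\in\{-1,0,1\}$. We then run the procedure of Section \ref{section:3} with $d=-7$ and $n=7$. This produces $E_3: y^2=x^3-7b_2x^2+8\cdot 7^2b_4x-16\cdot 7^3b_6$ with an integral $7$-torsion point $Q=(x_1,y_1)$, and finally the Tate normal form $\mathcal{T}_{b,c}$ with $b=t^2(t-1)$ and $c=t(t-1)$. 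Equation \eqref{eq:2} gives
$$
(t-1)^3(t^3-8t^2+5t+1)\overline{a}_3^4=2^{12}\,7^6\,t\,\Delta_1 .
$$
Taking $2$-adic valuations and using $\overline{a}_3=2y_1$, the relation becomes $4v_2(y_1)+(\text{terms in }t)=8+v_2(\Delta_1)$, exactly as in \eqref{eq:24}.

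The first step is to extract parity information on $a_1$ and $a_3$. By Remark \ref{rmk:4}, $Q$ reduces to a singular point of $E_3$ modulo $2$. The partial derivatives then force $x_1$ even, and the curve equation forces $y_1$ even. Additive reduction gives $v_2(c_4)>0$, and since $b_2=a_1^2+4a_2$ this forces $a_1=0$, hence $v_2(b_2)\ge 2$. Plugging into the equation of $E_3$ gives $2v_2(y_1)\ge 3$, so $v_2(y_1)\ge 2$. Repeating the argument then gives $v_2(x_1)\ge 2$. The goal is to also show $a_3=0$, that is, $v_2(b_6)\ge 1$. Once $a_1=a_3=0$, the minimal model has the form assumed in Lemma \ref{lemma:5}, which yields the contradiction.

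Next we split according to $v_2(t)$. If $v_2(t)\ge 0$, then $\mathcal{T}_{b,c}$ is $2$-integral and centred at a point reducing to a singular point. Step $1$ of Tate's algorithm forces $v_2(b)>0$, and then $v_2(1-c)=0$ contradicts Step $2$, which requires $v_2((1-c)^2-4b)>0$. This is identical to the corresponding case of Lemma \ref{lemma:5}. If $v_2(t)<0$, we use $v_2(b)=3v_2(t)$ and $v_2(c)=2v_2(t)$, together with $b=-\overline{a}_2^3/\overline{a}_3^2$ and $c=1-\overline{a}_1\overline{a}_2/\overline{a}_3$, to obtain two linear relations in $v_2(s)$, $v_2(\overline{a}_2)$ and $v_2(y_1)$. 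As in Lemma \ref{lemma:5}, the first task is to rule out $v_2(s)<0$. Then we expand $\overline{a}_2=-s^2+3x_1-7b_2$ with $s$ given by \eqref{eq:1}, writing $\overline{a}_2\cdot(2y_1)^2$ as a polynomial in $x_1,y_1,b_2,b_4$ and bounding the $2$-adic valuation of each monomial using $v_2(x_1)\ge 2$, $v_2(y_1)\ge2$ and $v_2(b_2)\ge 2$. Feeding this into $v_2(b)=3v_2(t)<0$ should force $v_2(y_1)\ge 3$. The curve equation of $E_3$ then gives $v_2(x_1^3-7b_2x_1^2+8\cdot49b_4x_1-16\cdot 343 b_6)\ge 6$, which forces $v_2(b_6)\ge1$. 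Since $b_6=a_3^2+4a_6$, this means $a_3=0$, and Lemma \ref{lemma:5} finishes the proof.

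The main obstacle is the valuation bookkeeping in the case $v_2(t)<0$. The bound on the expanded numerator of $\overline{a}_2$ has to be sharp enough to push $v_2(y_1)$ to at least $3$. The extra factors of $7$ in place of $5$ do not change $2$-adic valuations, but the exponents $3v_2(t)$ and $2v_2(t)$ differ from the $\ell=5$ case, where $b=c$. If the direct monomial bound is insufficient, I would fall back on the discriminant relation to bound $v_2(t)$ from below, since Ogg's formula (Theorem \ref{thm:6}) gives $v_2(\Delta_1)\ge 2$. I would combine this with minimality of $E_4$, which forbids $v_2(\overline{a}_i)\ge i$ for all $i$ simultaneously, to exclude the small residual cases by hand.
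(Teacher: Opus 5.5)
Your proof is correct and follows the paper's skeleton: contradiction, $a_1=0$ from $v_2(c_4)>0$, the case $v_2(t)\ge 0$ handled as in Lemma \ref{lemma:5}, and in the case $v_2(t)<0$ pushing $v_2(y_1)$ up until the equation of $E_3$ forces $v_2(b_6)\ge 1$, hence $a_3=0$, and then Lemma \ref{lemma:5} applies.

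The only divergence is how you push $v_2(y_1)$ up. The paper does it in one line from the discriminant relation you already wrote down. For $v_2(t)<0$ the $t$-terms contribute $3v_2(t)+3v_2(t)-v_2(t)=5v_2(t)$, so $4v_2(y_1)=8+v_2(\Delta_1)-5v_2(t)\ge 15$. This gives $v_2(y_1)\ge 4$ at once. That is what your ``fallback'' should be doing: it bounds $v_2(y_1)$ from below, not $v_2(t)$.

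Your Lemma \ref{lemma:4}-style expansion also works, but not with only the inputs you list. Set $N:=4y_1^2\overline{a}_2=-(392b_4-14b_2x_1+3x_1^2)^2+4y_1^2(3x_1-7b_2)$. Then $v_2(b)=3v_2(N)-8-8v_2(y_1)$. The squared bracket has valuation only $6$ if $b_4$ is odd, and then $v_2(b)=10-8v_2(y_1)$ does not exclude $v_2(y_1)=2$. You must also use $v_2(b_4)\ge 1$, which holds because $a_1=0$ gives $b_4=2a_4$. With that, $v_2(N)\ge 8$ forces $v_2(y_1)\ge 3$, and the $b_6$ step goes through.

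Ruling out $v_2(s)<0$ is unnecessary in either route.
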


\begin{proof}
Arguing by contradiction, suppose that $E$ has additive reduction at $2$. We can always find a minimal equation for $E$
$$
E_1 : y^2 + a_1xy + a_3y = x^3 + a_2x^2 + a_4x + a_6
$$
with integer coefficients such that $a_1,a_3\in\{0,1\}$ and $a_2\in\{-1,0,1\}$. After carrying out the procedure explained in Section \ref{section:3}, we obtain the twist of $E_1$ in Tate normal form
$$
\mathcal{T}_{b,c}:  y^2+ (1-c)xy - by = x^3 - bx^2
$$
with $b :=- \dfrac{\overline{a}_2^3}{\overline{a}_3^2}$ and $c:= 1- \dfrac{\overline{a}_1\overline{a}_2}{\overline{a}_3}$. 
\\~\\
Moreover, since the point $(0,0)$ on this curve has order $7$, there exists $t\in\mathbb{Q}$ such that $b = t^2(t-1)$ and $c = t(t-1)$. The relation with the discriminant of the original curve is
$$
(t-1)^3(t^3-8t^2 + 5t + 1)\overline{a}_3^4= 7^6 2^{12}t\Delta_1.
$$
Note that if $v_2(t)\geq 0$, we can argue exactly as in the proof of the previous lemma and reach a contradiction. Therefore $v_2(t)<0$. Taking the $2$-adic valuation in the previous equation and grouping terms, we obtain
$$
4v_2(y_1) = 8 + v_2(\Delta_1) - 5v_2(t).
$$
This implies that $v_2(y_1)\geq 4$. Since the point $(x_1,y_1)$ on the curve $E_3$ has order $7$ and the reduction at $2$ is additive, this point reduces to a singular point modulo $2$ (see Remark \ref{rmk:4}). Differentiating the equation of the reduced curve  $y^2=x^3 - 7b_2x^2 + 8\cdot 7^2b_4 x - 16 \cdot 7^3 b_6$ with respect to $x$, and taking into account that it must vanish at $(x_1, y_1)$, we obtain
$$
3x_1^2\equiv 0 \mod{2}.
$$
Hence $v_2(x_1)>0$. Moreover, since $(x_1,y_1)$ is a point of $E_3$, we have the equation
\begin{equation}\label{eq:31}
    y_1^2= x_1^3 -7b_2x_1^2+ 8\cdot7^2b_4x_1  -16\cdot 7^3b_6
\end{equation}
We divide the proof according to the valuation of $b_2$.
\\~\\
\noindent \fbox{$v_2(b_2)> 0$}
\\~\\
The definition $b_2 = a_1^2+ 4a_2$, together with the hypothesis that $a_1\in \{0,1\}$ and $a_2\in\{-1,0,1\}$, implies that $v_2(b_2) \geq 2$ and $a_1 = 0$. The latter also implies that $b_4=  2a_4 + a_1a_3 = 2a_4$, hence $v_2(b_4) > 0$. Taking the $2$-adic valuation in equation (\ref{eq:31}), we obtain
$$
8\leq v_2(x_1^3-7b_2x_1^2 + 8\cdot 7^2b_4x_1- 16\cdot 7^3b_6).
$$
The valuation of each term to the right of $x_1^3$ is greater than or equal to $4$, hence $v_2(x_1)>1$. This implies that the valuation of each term to the left of $16\cdot 7^3b_6$ is greater than or equal to $6$. Therefore $v_2(b_6)>0$. The definition of $b_6$ is $b_6 = a_3^2+4a_6$. Hence $a_3 = 0$. We have reached that in the original equation $a_1 = a_3 = 0$. Therefore, by the previous lemma we obtain a contradiction.
\\~\\
\noindent \fbox{$v_2(b_2)=0$}
\\~\\
In this case, since the curve has additive reduction at $2$, we have $v_2(b_2^2-24b_4) = v_2(c_4)>0$, which contradicts $v_2(b_2)=0$.
\end{proof}
\begin{proposition}\label{prop:11}
Let $E/\mathbb{Q}$ be an elliptic curve and let $p\neq 7$ be a prime. Suppose that there exists $P\in E(\mathbb{Q}(\sqrt{-7}))[7]\setminus E(\mathbb{Q})$. Then $E$ does not have additive reduction at $p$.
\end{proposition}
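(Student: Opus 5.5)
The plan is to mirror the proof of Proposition \ref{prop:9}, with the parametrization of $7$-torsion in Tate normal form in place of $b=c$. First, Lemma \ref{lemma:6} disposes of $p=2$, so we may assume $p\neq 2,7$. We argue by contradiction and suppose $E$ has additive reduction at $p$. Start from a minimal model $E_1$ with $a_1,a_3\in\{0,1\}$, $a_2\in\{-1,0,1\}$. Apply the procedure of Section \ref{section:3} with $d=-7$ to obtain $E_3$, $E_4$ and finally $\mathcal{T}_{b,c}$ with $b=t^2(t-1)$, $c=t(t-1)$.

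Since $p\nmid 2\cdot 7$, the twist is unramified at $p$. By Remark \ref{rmk:3}, $E_3$ is minimal at $p$ with additive reduction, and so is $E_4$ whenever its coefficients are $p$-integral. Remark \ref{rmk:4} shows that $Q=(x_1,y_1)$, hence $(0,0)$ on $E_4$ and on $\mathcal{T}_{b,c}$, reduces to a singular point. Equation \eqref{eq:2} then gives
$$(t-1)^3(t^3-8t^2+5t+1)\overline{a}_3^4 = 2^{12}7^6 t\Delta_1,$$
so that $3v_p(t-1)+v_p(t^3-8t^2+5t+1)+4v_p(y_1)=v_p(t)+v_p(\Delta_1)$.

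We split according to the sign of $v_p(t)$.

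\textbf{Case $v_p(t)\geq 0$.} The model $\mathcal{T}_{b,c}$ is $p$-integral and centred at a singular point. Step 1 of Tate's algorithm forces $v_p(b)>0$, so $v_p(t)>0$ or $v_p(t-1)>0$, and in either case $v_p(c)>0$. Step 2 (additive reduction) then requires $v_p((1-c)^2-4b)>0$, which is impossible since this quantity is a unit. This is exactly the argument of Lemma \ref{lemma:5}.

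\textbf{Case $v_p(t)<0$.} Here $v_p(c)=2v_p(t)<0$, so $v_p(c)=v_p(\overline{a}_1\overline{a}_2/\overline{a}_3)$. Together with $v_p(b)=3v_p(t)=3v_p(\overline{a}_2)-2v_p(y_1)$ this yields the analogues of \eqref{eq:28} and \eqref{eq:29}. Eliminating gives $v_p(t)=2v_p(s)-v_p(\overline{a}_2)$, where $\overline{a}_1=2s$ is a unit multiple of $s$.

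First we rule out $v_p(s)<0$: in that case $v_p(\overline{a}_2)=2v_p(s)$, so $v_p(t)=0$, a contradiction. Hence $v_p(s)\ge0$ and $v_p(\overline{a}_2)>0$. Then $E_4$ is $p$-integral, hence minimal with additive reduction and singular point $(0,0)$. Step 2 of Tate applied to $E_4$ gives $v_p(\overline{a}_1^2+4\overline{a}_2)>0$, so $v_p(s)>0$. Minimality now forces $v_p(\overline{a}_2)=1$ or $v_p(y_1)\le 2$, since otherwise $v_p(\overline{a}_i)\ge i$.

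If $v_p(\overline{a}_2)=1$, then $v_p(t)=2v_p(s)-1\geq 1$, contradicting $v_p(t)<0$. If instead $v_p(\overline{a}_2)\ge2$, then $v_p(y_1)\le2$, and $3v_p(t)=3v_p(\overline{a}_2)-2v_p(y_1)\ge 6-4>0$, again a contradiction.

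The main obstacle is keeping track of which model is minimal and integral at each step. The delicate point is justifying the integrality of $E_4$ before invoking minimality and Step 2; we handle it by first proving $v_p(s)\ge 0$ through the valuation identities, as in Proposition \ref{prop:9}. Unlike at $p=2$, no special care with the $2^{12}$ factor is needed, so the argument should be shorter than the one in Lemmas \ref{lemma:5} and \ref{lemma:6}.
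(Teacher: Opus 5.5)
Your argument is correct and follows the paper's proof. The shared steps are: Lemma \ref{lemma:6} for $p=2$; the Tate normal form with $b=t^2(t-1)$, $c=t(t-1)$; the case $v_p(t)\ge 0$ handled as in Lemma \ref{lemma:5}; and, for $v_p(t)<0$, first $v_p(s)\ge 0$, then integrality and minimality of $E_4$, then Step 2 of Tate's algorithm giving $v_p(s)>0$. The only difference is the finish: you close the case $v_p(t)<0$ directly with $v_p(t)=2v_p(s)-v_p(\overline{a}_2)$ (the analogue of \eqref{eq:14}) plus minimality, which is shorter and cleaner than the paper's detour through the discriminant relation \eqref{eq:32} to pin down $v_p(y_1)$, $v_p(\Delta_1)$ and $v_p(t)$.
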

\begin{proof}
First, by Lemma \ref{lemma:6}, we may assume that $p\neq 2$. We begin with the curve $E$ with a minimal model
$$
E_1 : y^2 + a_1xy + a_3y = x^3 + a_2x^2 + a_4x + a_6
$$
with integer coefficients such that $a_1,a_3\in\{0,1\}$ and $a_2\in\{-1,0,1\}$. Arguing by contradiction, suppose that $E$ has additive reduction at $p$. Following the procedure in Section \ref{section:3}, we obtain the twist of $E_1$ over $\mathbb{Q}\left(\sqrt{-7}\right)$ in Tate normal form
$$
\mathcal{T}_{b,c}:  y^2+ (1-c)xy - by = x^3 - bx^2.
$$
Moreover, since the point $(0,0)$ on this curve has order $7$, there exists $t\in\mathbb{Q}$ such that $b = t^2(t-1)$ and $c = t(t-1)$. The relation with the discriminant of the original curve is
$$
(t-1)^3(t^3-8t^2 + 5t + 1)\overline{a}_3^4= 2^{12}7^6 t\Delta_1.
$$
Note that if $v_2(t)\geq 0$, we can argue exactly as in the proof of Lemma \ref{lemma:5} and reach a contradiction. Therefore $v_2(t)<0$. Taking the $p$-adic valuation in the previous equation we obtain
\begin{equation}\label{eq:32}
    4v_p(y_1) = v_p(\Delta_1) - 5v_p(t),
\end{equation}
which implies that $v_p(y_1)\geq 2$.
\\~\\
The definitions of $b$ and $c$ are $b :=- \dfrac{\overline{a}_2^3}{\overline{a}_3^2}$ and $c:= 1- \dfrac{\overline{a}_1\overline{a}_2}{\overline{a}_3}$. Taking the $p$-adic valuation we obtain, on the one hand,
\begin{equation}\label{eq:33}
    3v_p(t) = 3v_p(\overline{a}_2)- 2v_p(y_1),
\end{equation}
and on the other hand, since $v_p(c)= v_p\left(1-\dfrac{\overline{a}_1\overline{a}_2}{\overline{a}_3}\right)<0$, we have
\begin{equation}\label{eq:34}
    2v_p(t) = v_p(\overline{a}_1) + v_p(\overline{a}_2)- v_p(\overline{a}_3).
\end{equation}
We have $v_p(\overline{a}_1) = v_p(s)$ and if $v_p(s)<0$, then $v_p(\overline{a}_2) = 2v_p(s)$. Substituting into equations \eqref{eq:33} and \eqref{eq:34}, we obtain
$$
\left\{
\begin{aligned}
 3v_p(t) &= 2(3v_p(s)- v_p(y_1))\\
 2v_p(t) &= (3v_p(s)-v_p(y_1))
\end{aligned}
\right.
$$
Hence $v_p(t) = 0$, a contradiction. Therefore we have proved that $v_p(s)\geq 0$. This implies that $v_p(\overline{a}_2) = v_p(-s^2+3x_1-7b_2)\geq 0$.
\\~\\
Moreover, if $v_p(\overline{a}_2)= 0$, subtracting equations \eqref{eq:33} and \eqref{eq:34} we obtain
$$
-v_p(t) = v_p(8\cdot 7^2b_4 - 2 \cdot 7b_2x_1 + 3x_1^2)\geq v_p(y_1),
$$
since $v_p(s)\geq 0$. Applying this inequality in equation \eqref{eq:32}, we obtain
$$
0 = v_p(\Delta_1) - v_p(t) - 4(v_p(t) + v_p(y_1))>0,
$$
a contradiction. Therefore $v_p(\overline{a}_2)>0$.
\\~\\
The curve of model $E_4$ has additive reduction at $p$ because of Remark \ref{rmk:3}. We can apply Tate's algorithm to the curve $E_4$, since all its coefficients are $p$-integral and it is centered at a point of the singular fiber by Remark \ref{rmk:4}. Step $2$ of Tate's algorithm shows that  we have
$$
v_p(\overline{a}_1^2+4\overline{a}_2)>0.
$$
This implies that $v_p(s)>0$. By Remark \ref{rmk:3}, the equation of $E_4$ is minimal. Note that if $v_p(y_1) = 2$, then equation \eqref{eq:33} implies that $3v_p(t)=3v_p(\overline{a}_2) - 4$, which is a contradiction because $v_p(t)<0$ and $v_p(\overline{a}_2)>0$. Therefore $v_p(\overline{a}_3) = v_p(y_1)\geq 3$, $v_p(\overline{a}_1)\geq 1$ and $v_p(\overline{a}_2)\geq 1$. By the minimality of $E_4$, necessarily $v_p(\overline{a}_2) = 1$.
\\
Subtracting equations \eqref{eq:34} and \eqref{eq:33} we obtain
$$
-v_p(t) = v_p(8\cdot 7^2b_4 - 2 \cdot 7b_2x_1 + 3x_1^2)- 2\geq v_p(y_1)-1,
$$
where we used that $v_p(s)>0$.
\\~\\
Using this inequality in equation \eqref{eq:32} we obtain
$$
0 = v_p(\Delta_1) - 5v_p(t) - 4v_p(y_1) \geq v_p(\Delta_1) + v_p(y_1) - 5.
$$

Since $v_p(y_1)\geq 3$ and the reduction is additive at $p$, we obtain that $v_p(y_1) = 3$ and $v_p(\Delta_1) = 2$. Hence, substituting in the previous equation, we get $v_p(t)= -2$.
\\~\\
Substituting the values of each term into equation \eqref{eq:33}, we obtain
$$
-6 = 3- 6= -3,
$$
a contradiction.
\end{proof}
\begin{proposition}\label{prop:12}
    Let $E/\mathbb{Q}$ be a rational elliptic curve such that there exists $P\in E(K)[7]\setminus E(\mathbb{Q})$, where $K/\mathbb{Q}$ is a quadratic field. Suppose that $E$ has additive reduction at $7$. Then the following are equivalent:
    \begin{itemize}
        \item $7 \mid d_K$, where $d_K$ is the discriminant of $K$.
        \item The Kodaira symbol of $E$ at $7$ is not $II$.
    \end{itemize}
    Moreover, if $7 \mid d_K$, then $E$ has Kodaira symbol $IV^*$.
\end{proposition}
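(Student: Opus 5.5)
I would mirror the proof of Proposition \ref{prop:10}, replacing the $\ell=5$ input by the corresponding result for rational $7$-torsion. Write $K=\mathbb{Q}(\sqrt{d})$ with $d$ square-free. By \cite[Corollary 4]{GonTorI} (used in Section \ref{section:3}), the twist $E^d$ has a rational point of order $7$.

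The key external input is the analogue of \cite[Theorem 1.2]{Melistas2022PurelyAdditive} for $\ell=7$. It says that if an elliptic curve over $\mathbb{Q}$ has a rational point of order $7$ and additive reduction at $7$, then its Kodaira symbol at $7$ is $II$. I expect this case to be covered in the same paper of Melistas, which treats rational $\ell$-torsion with additive reduction at $\ell$.

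\textbf{Case $7\mid d_K$.} The quadratic twist relating $E$ and $E^d$ is ramified at $7$.
\begin{enumerate}
\item First I must check that $E^d$ has additive reduction at $7$. If $E^d$ had good or multiplicative reduction at $7$, then $E$, being its ramified quadratic twist at $7>3$, would have Kodaira symbol $I_0^*$ or $I_n^*$.
\item I would exclude $I_0^*$ and $I_n^*$ for $E$ directly. In both cases the image of inertia at $7$ is small and mostly acts through the quadratic character, so the twist $E^d$ has semistable reduction at $7$ with a rational $7$-torsion point. The alternative is to treat this subcase with Proposition \ref{prop:1} applied to $E^d$.
\item Granting that $E^d$ is additive at $7$, Melistas gives symbol $II$ for $E^d$. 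The table in \cite[Lemma 4.4, Table 2]{Paterson2024MultiquadraticSelmer} says a ramified quadratic twist sends $II\mapsto IV^*$. Hence $E$ has symbol $IV^*$, which is not $II$.
\end{enumerate}
This proves one implication and the ``moreover'' clause at once.

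\textbf{Case $7\nmid d_K$.} The twist is unramified at $7$, so by Remark \ref{rmk:3} the curves $E$ and $E^d$ have the same reduction type and Kodaira symbol at $7$. Since $E$ is additive at $7$ by hypothesis, so is $E^d$. Melistas then forces symbol $II$ for $E^d$, hence for $E$. Taking the contrapositive, if the symbol of $E$ is not $II$ then $7\mid d_K$. Together with the previous case this gives the equivalence.

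The main obstacle is step 2 of the first case: showing that $E^d$ cannot have good or multiplicative reduction at $7$ when $E$ is additive and $7\mid d_K$. Proposition \ref{prop:10} glosses over the same point. I would first try the inertia analysis at $7$ via Proposition \ref{prop:1}, combined with the Sutherland–Zywina image list (groups 7B.1.6 and 7B.6.1). If that does not close it, I would fall back on an explicit Tate-normal-form valuation argument at $7$ for the model $\mathcal{T}_{b,c}$ with $b=t^2(t-1)$, $c=t(t-1)$, in the style of Lemma \ref{lemma:5}.
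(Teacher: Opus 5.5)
Your treatment of the case $7\nmid d_K$ is correct and is exactly the paper's argument. You are also right that the case $7\mid d_K$ hides a point the paper never addresses. Its proof applies \cite[Theorem 1.2]{Melistas2022PurelyAdditive} to $E^d$ without checking that $E^d$ is additive at $7$. (It also writes $II^*$ where $IV^*$ is meant; your transfer $II\mapsto IV^*$ is the right one.) However, your step 2 cannot be carried out, because what it asserts is false. Let $E'$ be the curve 26b1, $y^2+xy+y=x^3-x^2-3x+3$. It has $\Delta=-2^7\cdot 13$, so good reduction at $7$, and $(1,0)$ has order $7$. Let $E$ be its quadratic twist by $-7$ and $K=\mathbb{Q}(\sqrt{-7})$. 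The image $P\in E(K)$ of $(1,0)$ has order $7$ and satisfies $\sigma(P)=-P\neq P$, so $P\in E(K)[7]\setminus E(\mathbb{Q})$. Then $E$ is additive at $7$ and $7\mid d_K$, but its Kodaira symbol at $7$ is $I_0^*$ ($j$ integral, $v_7(\Delta)=6$), not $IV^*$. Twisting instead the member $t=7$ of the family $b=t^2(t-1)$, $c=t(t-1)$, which has split multiplicative reduction $I_7$ at $7$, gives symbol $I_7^*$. So no inertia argument, Sutherland--Zywina case analysis or Tate-normal-form computation can rule out $I_0^*$ and $I_n^*$. The ``moreover'' clause, and the $\ell=7$ line of Remark~\ref{rmk:1}, are false as stated.

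What you can prove is the equivalence, and it does not need step 2. Suppose $E$ had symbol $II$ at $7$ and $7\mid d_K$. Then the ramified twist $E^d$ would have $v_7(\Delta)=8$ with integral $j$, i.e.\ symbol $IV^*$. It would then be additive at $7$ with a rational point of order $7$, contradicting Melistas. In your own case split this reads: good, multiplicative and additive reduction of $E^d$ give $I_0^*$, $I_n^*$ and $IV^*$ for $E$, none of which is $II$. The correct supplement is that if $7\mid d_K$ then $E$ has symbol $IV^*$, $I_0^*$ or $I_n^*$, with $IV^*$ occurring exactly when $E^d$ is additive at $7$. The same phenomenon breaks the ``moreover'' clause of Proposition~\ref{prop:10}; for instance 11a1 twisted by $5$ has symbol $I_0^*$ at $5$. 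So the point you flagged there is a real error, not just a gloss.
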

\begin{proof}
Suppose that $7|d_K$. If we denote $K = \mathbb{Q}\left( \sqrt{d}\right)$ with $d$ a square-free integer, then the twist $E^d$ has a rational point of order $7$. Mentzelos Melistas   proved (\cite[Theorem 1.2]{Melistas2022PurelyAdditive}) that in this case the Kodaira symbol of $E^d$ at $7$ must be $II$. Therefore, $E$ has Kodaira symbol $II^*$ at $7$, since the twist relating the two curves is ramified at $7$.
\\~\\
Conversely, if $7\nmid d_K$, the twist relating $E$ and $E^d$ is unramified at $7$. Therefore, the Kodaira symbol at $7$ of both curves is the same. Again, the result of Mentzelos Melistas  implies that $E^d$ has Kodaira symbol $II$.
\end{proof}
Propositions \ref{prop:8}, \ref{prop:11} and \ref{prop:12} along with Theorem \ref{thm:3} prove Theorem \ref{thm:4} in the case of $\ell = 7$. This concludes the proof of Theorem \ref{thm:4}.
\section{Conclusion}
All the results of this article are summarized in Theorem \ref{thm:4} and Theorem \ref{thm:5}. Going back to our original problem 
\addtocounter{problem}{-1}
\begin{problem}
Is there a precise (and easy) description of which are the possible quadratic extensions
$K|\mathbb{Q}$ with $E_{\tors}(\mathbb{Q}) \neq E_{\tors}(K)$ , ideally in terms of some invariant(s) of the curve?
\end{problem}
The results of this article bring us closer to a satisfactory answer to that question. Indeed, we understand almost completely the ramification of $K$. Note that in the case of quadratic fields knowing the ramification of $K$ and knowing $K$ is the same. 
\\~\\
Concretely, if there's a point $P\in E(K)[\ell]\setminus E(\mathbb{Q})$, then $\ell\in\{2,3,5,7\}$ and we have:
\begin{itemize}
    \item If $\ell=2$, then $K = \mathbb{Q}(\sqrt{\Delta})$, where $\Delta$ is the minimal discriminant of $E$.
    \item If $\ell = 5,7$, then we can determine all the primes that ramify at $K$. They are all $p\neq\ell$ such that $E$ has additive reduction along with maybe $\ell$ depending on the Kodaira symbol at $\ell$. To determine $K$ it only remains to know the power of $2$ at $d_K$ and its sign.
    \item If $\ell = 3$, then we can determine the primes $p\neq 3$ that ramify on $K$ looking at the Kodaira symbols at the same prime. In this case, $K$ is not necessary unique, but there can be at most $2$. In case $K$ is not unique we can say more about the ramification of $3$ (see Remark \ref{rmk:5}). It remains to know the power of $2$ that divides $d_k$ along with his sign and the ramification of $3$. Note that the ramification of $3$ has been studied in this article in the case when $E(K)\cong \mathcal{C}_9$.
\end{itemize}
It also remains to study the mixed case defined in the introduction. Future work by the author is already in progress concerning these matters.
\section{Appendix: Tate's Algorithm}

The purpose of this appendix is to facilitate the reading of the article by collecting, in one place, the steps of Tate's algorithm as presented in \cite[p.364]{Silverman1994AdvancedTI} in order to spare the reader from having to consult \cite{Silverman1994AdvancedTI}. We do not introduce any new results or modifications to the algorithm here. Rather, we simply reproduce the relevant steps in a form convenient for reference.
\\~\\
Let $E/K$ be an elliptic curve defined over a local field and $\pi$ an uniformizer for the rings of integers $R$ of $K$ with residue field $k$. We will follow the description of Tate's algorithm of \cite[p.364]{Silverman1994AdvancedTI} skipping the computation of some quantities that we are not interested in.  As we go along the algorithm we will be making various assumptions. These assumptions are cumulative, and will be \fbox{boxed} for clarity.
    \\~\\
     Consider a model for $E$
    \begin{equation}\label{eq:35}
       E: y^2+a_1xy + a_3y = x^3+a_2x^2+a_4x+a_6
    \end{equation}
    where $a_i\in R$. During the algorithm, we are going to transform the Weierstrass equation so as to make the $a_i$'s more and more divisible by $\pi$. To keep track we introduce the notation
    \begin{equation*}
             a_{i,r} = \pi^{-r}a_i.
     \end{equation*}
     The following quantities:
    \begin{itemize}
        \item Kodaira symbol and reduction type.
        \item $v(\mathcal{D}_E/K)$: The valuation of the minimal discriminant.
        \item $m(E/K)$: The number of components, defined over $\overline{k}$ and counted without multiplicity, on the special fiber of a minimal proper regular model of $E$ over $R$. 
        \item The exponent $f(E/K)$ of the conductor.
    \end{itemize}
    can be computed executing the following algorithm. 
    \\~\\
    We start with the model of \eqref{eq:35}. It is not required to be minimal, nevertheless, when the algorithm finishes, the resulting model will be a minimal model and its discriminant will have valuation $v(\mathcal{D}_E/K)$ .
    \begin{itemize}
        \item \textit{Step 1}. If $\pi\nmid\Delta$, then the special fiber of a minimal proper regular model of $E$ over $R$ is an elliptic curve and we have 
        $$
        \text{Kodaira Symbol: $I_0$}, \quad v(\Delta) = 0, \quad m=1, \quad f=0.
        $$
        \item \textit{Step 2}. Assume \fbox{$\pi|\Delta$}, this means that $\tilde{E}$ has a singular point. Make a change of variables to move the singular point to $(0,0)$. Then \fbox{$\pi|a_3, a_4 \text{ and } a_6$}. If $\pi\nmid b_2$, then the reduction is multiplicative and the Kodaira symbol is $I_n$ where $n = v(\Delta)\geq 1$, $m=n$ and $f=1$.
        \item \textit{Step 3}. Assume now that \fbox{$\pi|b_2$}. If $\pi^2\nmid a_6$, then 
         $$
        \text{Kodaira Symbol: $II$}, \quad f = v(\Delta), \quad m=1.
        $$
        \item \textit{Step 4}. Assume that \fbox{$\pi^2|a_6$} (which implies that $\pi^2|b_6$ and $\pi^2|b_8$). If $\pi^3\nmid b_8$, then 
        $$
        \text{Kodaira Symbol: $III$}, \quad f = v(\Delta)-1, \quad m=2.
        $$
        \item \textit{Step 5}. Assume that \fbox{$\pi^3|b_8$} (which implies that $\pi^2|b_4$). If $\pi^3\nmid b_6$, then 
        $$
        \text{Kodaira Symbol: $IV$}, \quad f = v(\Delta)-2, \quad m=3.
        $$
        \item \textit{Step 6}. Assume that \fbox{$\pi^3|b_6$}. Then we can change the coordinates to get
        $$
        \fbox{$\pi|a_1,a_2$, \quad $\pi^2|a_3,a_4$, \quad $\pi^3|a_6$}.
        $$
        More precisely, the boxed assumptions up to this point show that we can factor
        \begin{eqnarray*}
            Y^2+a_1Y-a_2 &\equiv& (Y-\alpha)^2 \pmod \pi\\
            Y^2+a_{3,1}Y - a_{6,2} &\equiv& (Y-\beta)^2 \pmod \pi 
        \end{eqnarray*}
        and then the substitution $y' = y + \alpha x + \beta \pi$ will have the desired effect. After doing this, we consider the polynomial 
        $$
        P(T) = T^3+a_{2,1}T^2+a_{4,2}T + a_{6,3}.
        $$
        If $P(T)$ has distinct roots in $\overline{k}$ (i.e., if $\pi$ does not divide the discriminant of $P$), then 
          $$
        \text{Kodaira Symbol: $I_0^*$}, \quad f = v(\Delta)-4, \quad m=5.
        $$
        \item \textit{Step 7}.  If $P(T)$ has a simple root and one double root in $\overline{k}$, then 
         $$
        \text{Kodaira Symbol: $I_n^*$}, \quad f = v(\Delta)-4-n,  \quad m=n+5.
        $$
        If $p\neq 2$, then $n = v(\Delta) - 6$, so $f = 2$ and $m = v(\Delta)-1$. For arbitrary $p$, $n$ can be computed using a subprocedure that we won't require (see \cite[p. 367]{Silverman1994AdvancedTI}).
        \item \textit{Step 8}. Suppose now that $P(T)$ has a triple root in $\overline{k}$. Making a translation on $x$, we may assume that the root is $T=0$, which means that \fbox{$\pi^2|a_2,\pi^3|a_4$ and  $\pi^4|a_6$}. If the polynomial $Y^2+a_{3,2}Y-a_{6,4}$ has distinct roots in $\overline{k}$. Then, 
        $$
        \text{Kodaira Symbol: $IV^*$}, \quad f = v(\Delta)-6,  \quad m=7.
        $$
        \item \textit{Step 9}. Suppose that $Y^2+a_{3,2}Y-a_{6,4}$ has a double root in $\overline{k}$. Making a translation on $y$, we may assume that the root is $Y=0$, which means that \fbox{$\pi^3|a_3$ and $\pi^5|a_6$}. If $\pi^4\nmid a_4$, then, 
        $$
        \text{Kodaira Symbol: $III^*$}, \quad f = v(\Delta)-7,  \quad m=8.
        $$
        \item \textit{Step 10}. Suppose that \fbox{$\pi|a_4$}. If $\pi^6\nmid a_6$, then 
        $$
        \text{Kodaira Symbol: $II^*$}, \quad f = v(\Delta)-8,  \quad m=9.
        $$
        \item \textit{Step 11}. Finally, suppose that $\pi^6|a_6$. Then the original Weierstrass equation was not minimal. The substitution $(x,y) = (\pi^2x',\pi^3y')$ leads to the equation
        $$
        y'^2 + a_{1,1}x'y' + a_{3,3}y' = x'^3+a_{2,2}x'^2+a_{4,4}x' +a_{6,6}
        $$
        with coefficients in $R$ and discriminant $\Delta'=\pi^{-12}\Delta$. Go back to Step 1 and begin the algorithm again with this new equation. Note that we can only get to Step 11 a finite number of times, since each time we reach it, the valuation of the discriminant decreases by $12$.
    \end{itemize}
\bibliographystyle{amsplain}
\bibliography{QuadTor}
\end{document}